\documentclass[12pt,reqno]{amsart}
\usepackage{amsmath,amssymb,verbatim,comment,color}
\usepackage[pagebackref,pdftex,hyperindex]{hyperref}

\usepackage[margin=3cm]{geometry}
\usepackage{color}


\newcommand{\C}{\mathbb{C}}

\newcommand{\R}{\mathbb{R}}
\renewcommand{\S}{\mathbb{S}}

\newcommand{\Z}{\mathbb{Z}}

\newcommand{\ft}{\mathfrak{t}}

\newcommand{\cE}{\mathcal{E}}

\newcommand{\cH}{\mathcal{H}}

\newcommand{\cJ}{\mathcal{J}}

\newcommand{\cL}{\mathcal{L}}
\newcommand{\cM}{\mathcal{M}}

\newcommand{\cO}{\mathcal{O}}

\newcommand{\cT}{\mathcal{T}}

\newcommand{\cX}{\mathcal{X}}

\renewcommand{\a}{\alpha}
\renewcommand{\b}{\beta}
\newcommand{\g}{\gamma}
\renewcommand{\d}{\delta}
\newcommand{\e}{\varepsilon}
\newcommand{\s}{\sigma}

\renewcommand{\r}{\rho}

\renewcommand{\i}{\sqrt{-1}}
\newcommand{\p}{\partial}
\newcommand{\bp}{\bar{\partial}}
\newcommand{\dd}{\frac{\i}{2 \pi} \partial \bar{\partial}}
\newcommand{\ddt}{\frac{d}{dt}}

 
\newcommand{\cf}{{\rm cf.\ }} 
\newcommand{\eg}{{\rm e.g.\ }} 
\newcommand{\ie}{{\rm i.e.\ }} 

\renewcommand{\Re}{\mathrm{Re}}
\renewcommand{\Im}{\mathrm{Im}}

\DeclareMathOperator{\Aut}{Aut}

\DeclareMathOperator{\diag}{diag}

\DeclareMathOperator{\Hom}{Hom}

\DeclareMathOperator{\Lie}{Lie}
\DeclareMathOperator{\loc}{loc}

\DeclareMathOperator{\Mat}{Mat}

\DeclareMathOperator{\Ric}{Ric}
\DeclareMathOperator{\Rm}{Rm}

\DeclareMathOperator{\Tr}{Tr}
\DeclareMathOperator{\Vol}{Vol}

\renewcommand{\leq}{\leqslant}
\renewcommand{\geq}{\geqslant}

\renewcommand{\hat}{\widehat}
\renewcommand{\tilde}{\widetilde}

\numberwithin{equation}{section}       

\theoremstyle{definition}
{
\newtheorem{prop} {Proposition} [section]
\newtheorem{thm}[prop] {Theorem} 
\newtheorem{dfn}[prop] {Definition}
\newtheorem{lem}[prop] {Lemma}
\newtheorem{conj}[prop] {Conjecture}

}
\theoremstyle{remark}
\newtheorem*{ackn}{\bf{Acknowledgment}} 
\newtheorem{rk}[prop]{Remark}

\title{On the modified $J$-equation} 
\date{\today}

\author[R. Takahashi]{Ryosuke Takahashi}
\address{Mathematical Institute\\
Tohoku University\\
6-3\\
Aramaki Aza-Aoba\\
Aoba-ku\\
Sendai\\
980-8578\\
 JAPAN}
\email{ryosuke.takahashi.e1@tohoku.ac.jp}

\subjclass[2020]{Primary 53C55; Secondary 35A01}
\keywords{$J$-equation, coercivity, Nakai--Moishezon criterion}

\begin{document}
\maketitle
\begin{abstract}
In this paper, we study the modified $J$-equation introduced by Li--Shi \cite{LS16}. We first show that, on compact K\"ahler manifolds, the solvability of the modified $J$-equation is equivalent to the coercivity of the modified $J$-functional. Motivated by this characterization, we formulate a Nakai--Moishezon type criterion for the existence of solutions to the modified $J$-equation on general compact K\"ahler manifolds. We then verify this conjectural criterion in the case of smooth projective toric varieties. This extends the work of Collins--Sz\'ekelyhidi \cite{CS17} and provides further evidence for the expected algebro-geometric nature of the modified $J$-equation. As a potential application, we combine our results with Delcroix--Jubert \cite{DJ25}. Assuming our conjectural Nakai--Moishezon type criterion holds in general, we obtain a numerical sufficient condition for the existence of extremal K\"ahler metrics on arbitrary compact K\"ahler manifolds.
\end{abstract}
\tableofcontents
\section{Introduction}
Let $X$ be an $n$-dimensional compact complex manifold and $\Aut_{\rm red}(X)$ be its reduced automorphism group, \ie the connected subgroup of $\Aut(X)$ whose Lie algebra consists of holomorphic vector fields which have a zero on $X$. Let $T \subset \Aut_{\rm red}(X)$ a real torus with $\ft:=\Lie(T)$. Let $\omega$, $\hat{\chi}$ be $T$-invariant K\"ahler forms, and denote their K\"ahler classes by $\a$ and $\b$ respectively. Let $\cH(X,\hat{\chi})^T$ be the space of $T$-invariant $\hat{\chi}$-K\"ahler potentials
\[
\cH(X,\hat{\chi})^T:=\big\{\phi \in C^\infty(X;\R)^T \big| \chi_\phi:=\hat{\chi}+\dd \phi>0 \big\}.
\]
Since $T \subset \Aut_{\rm red}(X)$, the $T$-action on $(X,\hat{\chi})$ is Hamiltonian (\cf \cite[Chapter III, Corollary 4.6]{Kob95}). For any holomorphic vector field $v$ with $\Im(v) \in \ft$, we define the Hamiltonian $\theta_v^X(\chi_\phi)$ as a real-valued smooth function uniquely determined by the properties
\begin{equation} \label{definition and normalization of Hamiltonians}
i_v \chi_\phi=\frac{\i}{2 \pi} \bp \theta_v^X(\chi_\phi), \quad \int_X \theta_v^X(\chi_\phi) \chi_\phi^n=0.
\end{equation}

\subsection{Background and motivation}
Finding canonical metrics has been one of the central themes in K\"ahler geometry. Equations such as the constant scalar curvature K\"ahler (cscK) equation, the K\"ahler--Einstein equation, and their various twisted or weighted analogues reveal deep interactions between complex geometry, algebraic stability, and variational principles (\eg \cite{BJ25,FO19,Lah19,Li21,Mab03,TZ00,Zhu00}). Among these, the $J$-equation, introduced by Donaldson \cite{Don03} and Chen \cite{Che00}, plays a crucial role in understanding the relation between energy properness and the existence of cscK metrics. It also serves as an important model in the continuity approach to cscK metrics \cite{CC18, CC21a, CC21b}. An important progress in the analysis of the $J$-equation was achieved by Song--Weinkove \cite{SW08} who proved that the existence of a subsolution is equivalent to the solvability of the equation. Building on this insight, Collins--Sz\'ekelyhidi \cite{CS17} established a fundamental correspondence between the solvability of the $J$-equation and the coercivity of the associated functional, providing a model for the continuity method in K\"ahler geometry. A further breakthrough was made by G.~Chen \cite{Che21}, who established a connection between the solvability of the $J$-equation and algebro-geometric stability. Using Chen's result \cite{Che21}, Song \cite{Son20} obtained a Nakai--Moishezon type numerical criterion for the solvability of the $J$-equation, confirming the Lejmi--Sz\'ekelyhidi conjecture \cite{LS15}.

However, the usual $J$-equation does not incorporate the effect of holomorphic automorphisms, which often obstruct the existence or uniqueness of canonical metrics. To address this, Li--Shi \cite{LS16} introduced the modified $J$-equation :
\begin{equation} \label{modified J-equation}
\omega \wedge \chi_\phi^{n-1}=\frac{1}{n}(c_X+\theta_v^X(\chi_\phi)) \chi_\phi^n,
\end{equation}
where the constant $c_X$ only depends on $\a$, $\b$, and determined by
\begin{equation} \label{constant cX}
c_X:=\frac{n \a \cdot \b^{n-1}}{\b^n}.
\end{equation}
Also we note that the constant
\begin{equation} \label{constant mX}
m_X:=c_X+\min_X \theta_v^X(\chi_\phi)
\end{equation}
is independent of the choice of $\phi \in \cH(X,\hat{\chi})^T$, and the existence of a solution to \eqref{modified J-equation} easily leads to $m_X>0$ (see Section \ref{Hamiltonians}). They proved that the existence of a subsolution $\underline{\phi} \in \cH(X,\hat{\chi})^T$:
\begin{equation} \label{subsolution}
\big(c_X+\theta_v^X(\chi_{\underline{\phi}}) \big) \chi_{\underline{\phi}}^{n-1}-(n-1) \omega \wedge \chi_{\underline{\phi}}^{n-2}>0
\end{equation}
is equivalent to the solvability of the equation \eqref{modified J-equation}. This equivalence was obtained by studying the modified $J$-flow
\begin{equation} \label{modified J-flow}
\ddt \phi_t=-\Tr_{\chi_{\phi_t}} \omega+c_X+\theta_v^X(\chi_{\phi_t})
\end{equation}
with initial data $\phi_0 \in \cH(X,\hat{\chi})^T$. The modified $J$-equation is closely related to the study of extremal K\"ahler metrics, since the latter can be viewed as critical points of the modified $K$-energy. In this sense, the modified $J$-theory may serve as an analytic model for understanding extremal and, more generally, weighted cscK metrics (see Section \ref{Relation to extremal Kahler metrics}).

\subsection{Main results}
The purpose of this paper is to establish the analytic and numerical foundations of the modified $J$-theory, and to clarify how it extends the previous framework developed by \cite{CS17,LS16}. For later arguments, it is convenient to extend \eqref{modified J-equation} to a more general equation of the form:
\begin{equation} \label{generalized modified J-equation}
\Tr_{\chi_\phi} \omega+b \frac{\omega^n}{\chi_\phi^n}=c+\theta_v^X(\chi_\phi),
\end{equation}
where the constants $b \in \R$, $c>0$ are related with each other by
\begin{equation} \label{relation of b and c}
b=\frac{c \b^n-n \a \cdot \b^{n-1}}{\a^n}.
\end{equation}
In most cases, we assume that $b \geq 0$, but sometimes allow $b$ to be slightly negative. We extends the result \cite[Theorem 3.3]{LS16} to the generalized equation \eqref{generalized modified J-equation} as follows:

\begin{thm} \label{existence of solutions to the generalized equation}
Let $X$ be a compact complex manifold, and $T$, $v$, $\omega$, $\hat{\chi}$ as above. Then the generalized equation \eqref{generalized modified J-equation} with $b \geq 0$ admits a solution if and only if there exists $\underline{\phi} \in \cH(X,\hat{\chi})^T$ such that
\begin{equation} \label{subsolution for generalized equation}
\big(c+\theta_v^X(\chi_{\underline{\phi}}) \big) \chi_{\underline{\phi}}^{n-1}-(n-1) \omega \wedge \chi_{\underline{\phi}}^{n-2}>0.
\end{equation}
\end{thm}

Our first main result is an analogue of the existence-coercivity correspondence proved for the usual $J$-equation in \cite{CS17}.

\begin{thm} \label{existence and coercivity}
Let $X$ be a compact complex manifold, and $T$, $v$, $\omega$, $\hat{\chi}$ as above. Then the modified $J$-equation \eqref{modified J-equation} admits a solution $\phi \in \cH(X,\hat{\chi})^T$ if and only if the modified $J$-functional $J_v^\omega$ is coercive.
\end{thm}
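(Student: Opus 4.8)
The plan is to establish the two implications separately, with the bulk of the work going into the direction (existence $\Rightarrow$ coercivity), since (coercivity $\Rightarrow$ existence) should follow a standard variational/continuity argument combined with Theorem~\ref{existence of solutions to the generalized equation}.

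First, for (coercivity $\Rightarrow$ existence): I would argue by contradiction. If no solution exists, then by Theorem~\ref{existence of solutions to the generalized equation} (applied with $b=0$, so that \eqref{generalized modified J-equation} reduces to \eqref{modified J-equation}) there is no subsolution $\underline{\phi} \in \cH^T$ satisfying \eqref{subsolution}. The modified $J$-functional $J_{T,\omega}$ is strictly convex along geodesics (or weak geodesics) in $\cH^T$, so coercivity would give a minimizer along an appropriate completion; the Euler--Lagrange equation of this minimizer is precisely \eqref{modified J-equation}. The technical point is to upgrade a weak minimizer to a smooth solution, which one does via the a priori estimates already available for \eqref{generalized modified J-equation} from the proof of Theorem~\ref{existence of solutions to the generalized equation} — typically by perturbing to $b>0$, solving, and letting $b \to 0^+$ using uniform estimates coming from coercivity. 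Alternatively, and perhaps more cleanly, one runs the continuity method on $c$ (or on a parameter interpolating $\hat\chi$ with a fixed subsolution-admitting class) and uses coercivity of $J_{T,\omega}$ to obtain the $C^0$ bound that closes the openness–closedness argument; the higher-order estimates are then the same as in \cite{LS16}.

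For (existence $\Rightarrow$ coercivity): suppose $\phi_0 \in \cH^T$ solves \eqref{modified J-equation}, hence is the unique critical point of $J_{T,\omega}$ up to the automorphisms generated by $v$ (i.e., the connected subgroup $G \subset \Aut_0(X)$ preserving the structure, modulo which $J_{T,\omega}$ is strictly convex). The strategy, following Collins--Sz\'ekelyhidi \cite{CS17} and Chu--Lee \cite{CL21}, is: (i) show that $J_{T,\omega}$ is convex along $C^{1,1}$-geodesics in $\cH^T$ (this uses that the integrand of \eqref{generalized modified J-equation} with $b=0$ is the right kind of concave/convex expression, plus that $\theta_v^X(\chi_\phi)$ transforms correctly — the Hamiltonian term is the delicate ingredient and was handled in \cite{LS16}); (ii) show strict convexity transverse to the orbit direction; (iii) combine convexity along geodesics emanating from $\phi_0$ with a lower bound on the geodesic derivative at $\phi_0$ coming from the equation, to get a linear-growth lower bound $J_{T,\omega}(\phi) \geq \e\, d_1(\phi_0,\phi) - C$ for a Finsler-type distance $d_1$; and (iv) control $d_1$ from below by whatever normalization enters the definition of coercivity (typically the Aubin--Mabuchi $J$-type energy $J$ or the sup-normalized oscillation), using the quasi-triangle/entropy-type estimates of Darvas. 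Step (iii) is essentially Berman--Berndtsson-type: the slope of a convex function at a minimum controls its growth, and the slope at $\phi_0$ along the geodesic toward $\phi$ is nonnegative and in fact bounded below away from the orbit by strict convexity.

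The main obstacle I anticipate is handling the Hamiltonian term $\theta_v^X(\chi_\phi)$ in the convexity argument along geodesics: unlike the $J$-equation case, $\theta_v^X(\chi_\phi)$ depends nonlinearly (and non-locally, via the normalization $\int_X \theta_v^X(\chi_\phi)\chi_\phi^n = 0$) on $\phi$, so one must verify that $J_{T,\omega}$ remains convex once this term is included, and that the "transverse strict convexity" survives modulo the correct automorphism subgroup. Concretely, along a geodesic $\phi_t$ one needs to differentiate the functional twice and show the second derivative has a sign; the $\theta_v$ contribution produces cross terms that must be absorbed, and I expect this is where one invokes the structure established in \cite{LS16} (that $J_{T,\omega}$ is a genuine convex functional, which presumably already includes a moment-map / Legendre-type reformulation on the toric or Hamiltonian side). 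A secondary technical difficulty is the regularity of geodesics: one should work with $\e$-geodesics or $C^{1,1}$ geodesics (Chen, Błocki) and push the convexity estimates through by approximation, then extend $J_{T,\omega}$ and the distance $d_1$ to the appropriate metric completion of $\cH^T$ so that the coercivity statement makes sense and is equivalent to the one used in the existence direction.
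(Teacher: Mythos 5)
The two directions are handled quite differently in the paper, and your proposal diverges from it on both, with a genuine gap in the direction you call ``the bulk of the work.''

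For \emph{coercivity $\Rightarrow$ existence}, your plan --- minimize $J_{T,\omega}$ in a completion of $\cH^T$, then upgrade regularity by perturbing to $b>0$ and letting $b\to 0^+$, or by a continuity method in $c$ --- does not close. If you perturb to $b>0$, Theorem \ref{existence of solutions to the generalized equation} still requires a subsolution as a hypothesis, and producing a subsolution is precisely the problem: you would be assuming what you need to prove. The continuity method in $c$ has the same circularity (openness needs no help, but the $C^0$ estimate at the critical $c_X$ is not directly a consequence of coercivity --- coercivity gives energy bounds, not sup bounds, and passing from one to the other is the content, not a footnote). The paper's device is the opposite perturbation: run the modified $J$-flow for the generalized equation \eqref{generalized modified J-equation} with $b=-\e<0$, which exists for all time (Lemma \ref{ddphi and phi}, Lemma \ref{Laplacian estimate when b is negative}) \emph{without} a subsolution, and use the quantitative coercivity of $J_{T,\omega,-\e}$ (Proposition 4.1) to get a uniform $I$-bound and, via \eqref{formula for I}, a uniform Dirichlet energy bound on the normalized potentials. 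The weak limit $u_\infty$ then has zero Lelong numbers and bounded Dirichlet energy; the key technical step, which your proposal omits, is the local smoothing of $u_\infty$ in Euclidean balls (Lemma \ref{estimates for local regularization}) followed by the regularized-maximum gluing (Lemma \ref{gluing condition}, Proposition \ref{gluing of subsolutions}) and $T$-averaging (Proposition \ref{averaging of subsolutions}) to produce a genuine smooth $T$-invariant subsolution, which then feeds into Theorem \ref{existence of solutions to the generalized equation}. The Dirichlet energy bound is precisely what controls the Hamiltonian term $\Re(v)(u_k)$ inside the convolution estimates; this is the ``new feature'' the introduction flags, and it is absent from your sketch.

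For \emph{existence $\Rightarrow$ coercivity}, your Berman--Berndtsson-type slope argument (convexity along geodesics plus a positive slope at the minimizer, promoted to a $d_1$-growth bound) is conceptually viable but much heavier than what the paper does, and you correctly worry that the nonlocal normalization of $\theta_v^X(\chi_\phi)$ makes the strict-transverse-convexity step delicate. The paper sidesteps all of this: if $\hat\chi$ solves \eqref{modified J-equation} for $\omega$, then it also solves it for $\omega'=\omega-\delta\hat\chi$ (with $c_X$ replaced by $c_X-n\delta$), so $J_{T,\omega'}$ is bounded below by the convexity result of \cite{LS16}. An explicit integration by parts on $J_{T,\omega}-J_{T,\omega'}$ then produces the coercivity gap $n\delta\kappa\, I(\phi)$ directly, with no need for a metric completion, $d_1$-distance, transverse strict convexity, or a slope estimate. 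You get coercivity from a single clean identity rather than a Finsler-geometric machine. So while your route is in principle pursuable, the paper's argument is substantially simpler and is the one you should compare against.
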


For the precise definition of $J_v^\omega$ and coercivity, see Section \ref{functionals on the space of Kahler potentials}. This result shows that the coercivity of the modified $J$-functional completely characterizes solvability, in the same spirit as the coercivity criterion for various canonical metrics (\eg \cite{BDL20,CC18,CC21a,CC21b,CL21,DR17,DT92,Tia97a}).

In practice, it seems to be difficult to produce $\underline{\phi} \in \cH(X,\hat{\chi})^T$ satisfying \eqref{subsolution}. In particular, it is not clear whether the existence of a solution to the equation depends only on the classes $\a$, $\b$. The following theorem settles this question by showing that solvability depends only on the class $\a$.

\begin{thm} \label{solvability is independent of omega}
Let $X$ be a compact complex manifold, and $T$, $v$, $\omega$, $\hat{\chi}$ as above. Suppose that there exists a $T$-invariant K\"ahler form $\chi \in \b$ satisfying \eqref{modified J-equation} with respect to $\omega$ and $\omega' \in \a$ is another K\"ahler form. Then there exists $\chi' \in \b$ that solves \eqref{modified J-equation} with respect to $\omega'$.
\end{thm}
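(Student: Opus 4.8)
The plan is to combine Theorem \ref{existence and coercivity} with Theorem \ref{existence of solutions to the generalized equation}, reducing the statement to the assertion that coercivity of the modified $J$-functional, or equivalently solvability, is a condition depending only on the classes $\a$ and $\b$. First I would recall that by Theorem \ref{existence of solutions to the generalized equation} (applied with $b=0$, $c=c_X$), solving \eqref{modified J-equation} with respect to $\omega$ is equivalent to the existence of a subsolution $\underline{\phi}\in\cH^T$ in the sense of \eqref{subsolution}. So the real content is: if a subsolution exists for some $\omega\in\a$, then a subsolution exists for every $\omega'\in\a$. The natural route is a continuity/openness-closedness argument along a path $\omega_t$ of $T$-invariant K\"ahler forms in $\a$ joining $\omega=\omega_0$ to $\omega'=\omega_1$, say $\omega_t=(1-t)\omega+t\omega'$, which stays K\"ahler and $T$-invariant for all $t\in[0,1]$.

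Let $S\subseteq[0,1]$ be the set of $t$ for which \eqref{modified J-equation} with respect to $\omega_t$ admits a $T$-invariant solution $\chi_t\in\b$; by hypothesis $0\in S$. For openness, suppose $t_0\in S$ with solution $\chi_{t_0}=\chi_{\phi_{t_0}}$. Since $\chi_{t_0}$ is a genuine smooth solution and hence trivially a subsolution (the left side of \eqref{subsolution} at $\underline{\phi}=\phi_{t_0}$ equals $\omega_{t_0}\wedge\chi_{t_0}^{n-2}\cdot(\text{something strictly positive})$ — more precisely, plugging the equation in shows the $(n-1,n-1)$-form in \eqref{subsolution} equals $n\,\omega_{t_0}\wedge\chi_{t_0}^{n-1}/\chi_{t_0}^n\cdot(\cdots)$, which is positive), the subsolution condition \eqref{subsolution} is an open condition in $t$: the inequality of $(n-1,n-1)$-forms is strict and the form depends continuously on $\omega_t$ with $\chi_{\phi_{t_0}}$ held fixed. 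Hence $\phi_{t_0}$ remains a subsolution for $\omega_t$ with $t$ near $t_0$, and Theorem \ref{existence of solutions to the generalized equation} then produces an honest solution for such $t$; so $S$ is open.

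For closedness I would argue via coercivity and Theorem \ref{existence and coercivity}. Let $t_j\to t_\infty$ with $t_j\in S$. Each solvability is equivalent to coercivity of the modified $J$-functional $J_{T,\omega_{t_j}}$, and one examines how the coercivity constants depend on $t$: the functional $J_{T,\omega}$ depends linearly (or at least continuously in an explicit way) on $\omega$ through its defining integral, and on a compact parameter interval the entropy/energy terms are comparable uniformly in $t$, so a uniform coercivity estimate $J_{T,\omega_{t_j}}\geq \delta\, \cdot (\text{distance term}) - C$ passes to the limit $t_\infty$ with the same $\delta, C$ (shrunk slightly). Then coercivity of $J_{T,\omega_{t_\infty}}$ gives, again by Theorem \ref{existence and coercivity}, a solution for $\omega_{t_\infty}$, so $t_\infty\in S$ and $S$ is closed. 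Since $S$ is nonempty, open, and closed in the connected interval $[0,1]$, we get $1\in S$, i.e.\ $\omega'$ admits a solution $\chi'=\chi_1\in\b$.

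The main obstacle I expect is the closedness step: making precise the claim that the coercivity constants of $J_{T,\omega_t}$ can be chosen uniformly in $t\in[0,1]$. This requires checking that the various terms in the definition of $J_{T,\omega}$ (in particular any entropy-type term and the linear term built from $\omega$ and $\theta_v^X$) vary continuously with $\omega_t$ in the relevant norms and that the Hamiltonian normalization $\int_X\theta_v^X(\chi_\phi)\chi_\phi^n=0$ interacts well with the deformation — a point complicated by the fact, noted already in the paper, that the $T$-action does not interact nicely with local holomorphic coordinates. An alternative, possibly cleaner, route for closedness is to bypass coercivity and instead show directly that the subsolution condition is preserved in the limit: if $\phi_{t_j}$ solves \eqref{modified J-equation} for $\omega_{t_j}$, derive uniform a priori estimates (as in \cite{CS17} or via the $C^0$-estimate implicit in Theorem \ref{existence of solutions to the generalized equation}) on the $\chi_{\phi_{t_j}}$, extract a smooth limit $\chi_{\phi_\infty}$, and conclude it solves the equation for $\omega_{t_\infty}$; whichever of these two closedness arguments is technically lighter in the paper's framework is the one I would ultimately write up.
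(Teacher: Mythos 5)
Your reduction to ``coercivity depends only on the class $\a$'' is exactly the right first step (and is what the paper does via Theorem~\ref{existence and coercivity}), but the continuity-method scaffolding you then erect is unnecessary, and the closedness step---which you yourself flag as the main obstacle---is left as a hope rather than an argument. The claim that ``the coercivity constants of $J_{T,\omega_t}$ can be chosen uniformly in $t$'' is not just a matter of continuity in $t$ for fixed $\phi$: you need the comparison between $J_{T,\omega_t}(\phi)$ and $J_{T,\omega_{t'}}(\phi)$ to be uniform over \emph{all} $\phi\in\cH^T$, and nothing in your sketch delivers that.

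The paper closes this gap in one stroke, with no path and no openness/closedness dichotomy. Writing $\omega=\omega'+\dd\varphi_0$ for a $T$-invariant $\varphi_0$ and using the variational formula $\d J_{T,\omega}|_\phi(\psi)=\int_X\psi\,(\Tr_\omega\chi_\phi-c_X-\theta_v^X(\chi_\phi))\chi_\phi^n$, one integrates by parts to find
\[
J_{T,\omega}(\phi)-J_{T,\omega'}(\phi)
= n\int_0^1\!\!\int_X \phi\,\dd\varphi_0\wedge\chi_{t\phi}^{n-1}\,dt
= \int_0^1\!\!\int_X \varphi_0\,\frac{d\chi_{t\phi}^n}{dt}\,dt
= \int_X\varphi_0\,(\chi_\phi^n-\hat\chi^n),
\]
which is bounded in absolute value by $2\|\varphi_0\|_{C^0}\,\b^n$ uniformly in $\phi$, since $\chi_\phi^n$ is a probability measure up to the fixed total mass $\b^n$. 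Hence $J_{T,\omega}\geq\d_c I-B_c$ implies $J_{T,\omega'}\geq\d_c I-(B_c+2\|\varphi_0\|_{C^0}\b^n)$, and Theorem~\ref{existence and coercivity} gives the solution for $\omega'$ immediately. This is the calculation your first closedness route is implicitly reaching for; without it, your argument is incomplete. (Your alternative route via a priori estimates along the path would be a legitimate but substantially heavier detour.) Note also that the paper's proof, like the theorem's intended reading, takes $\omega'$ to be $T$-invariant so that $\varphi_0$ can be chosen $T$-invariant.
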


On the other hand, there is the question of whether the existence of solutions to the equation can be characterized numerically, or algebraically. For instance, the Nakai--Moishezon criterion \cite{DP04} allows one to determine the K\"ahler condition in terms of intersection numbers with arbitrary subvarieties. Theorem \ref{solvability is independent of omega} strongly indicates that a similar criterion should hold for the modified $J$-equation as well. More precisely, we would like to propose the following:

\begin{conj} \label{NM criterion}
Let $X$ be a compact complex manifold, and $T$, $v$, $\omega$, $\hat{\chi}$ as above. Then there exists $\phi \in \cH(X,\hat{\chi})^T$ satisfying \eqref{modified J-equation} if and only if $m_X>0$ and for all $T$-invariant $p$-dimensional irreducible subvarieties $Y \subset X$ with $p=1,\ldots,n-1$ we have
\begin{equation} \label{condition for subvarieties}
\int_Y \big( \big( c_X+\theta_v^X(\chi) \big) \chi^p-p \omega \wedge \chi^{p-1} \big)>0,
\end{equation}
where $\chi \in \b$ is any $T$-invariant K\"ahler form.
\end{conj}

\begin{rk}
The condition \eqref{condition for subvarieties} is independent of the choice of $\chi \in \b$. Indeed, the term including the Hamiltonian $\theta_v^X(\chi)$ can be rewritten as
\[
\int_Y \theta_v^X(\chi) \chi^p=\frac{1}{p+1} \int_Y(\chi+\theta_v^X(\chi))^{p+1}.
\]
Set $d_v:=d-4 \pi i_{\Im(v)}$ so that $\chi+\theta_v^X(\chi)$ is $d_v$-closed. For another $T$-invariant K\"ahler form $\chi'=\chi+\dd \phi$ with a $T$-invariant potential $\phi$, a direct computation shows that
\[
\chi'+\theta_v^X(\chi')=\chi+\theta_v^X(\chi)+\frac{1}{4 \pi} d_v d^c \phi
\]
where $d^c:=\i(\bp-\p)$. So the desired statement follows from the equivariant Chern--Weil theory (\cf \cite{BGV92}).
\end{rk}

\begin{rk}
In particular, when $T$ is trivial and $n=2$, the modified $J$-equation \eqref{modified J-equation} reduces to a complex Monge-Amp\`ere equation:
\[
(c_X \chi_\phi-\omega)^2=\omega^2.
\]
Consequently, Conjecture \ref{NM criterion} follows directly from the criterion of Demailly--P\u{a}un \cite{DP04} and \cite{Yau78}. In contrast, when $T$ is non-trivial, even for surfaces ($n=2$), the modified $J$-equation \eqref{modified J-equation} does not reduce to a complex Monge-Amp\`ere equation, and Conjecture \ref{NM criterion} remains genuinely nontrivial.
\end{rk}

In the case where $T$ is trivial, this was proposed by Lejmi--Sz\'ekelyhidi \cite{LS15} and subsequently established by Song \cite{Son20}, using \cite{Che21} as a key ingredient. We also note that, when the condition \eqref{condition for subvarieties} fails, the formation of singularities of the modified $J$-flow \eqref{modified J-flow} in the Calabi ansatz has been studied by Sivaram \cite{Siv24}. Focusing on the toric setting allows for more explicit results. Theorem \ref{NM criterion for smooth projective toric varieties} confirms Conjecture \ref{NM criterion} in this case, providing a concrete demonstration of the expected behavior under these assumptions.

\begin{thm} \label{NM criterion for smooth projective toric varieties}
Let $X$ be a smooth projective toric variety, $T$ the real torus associated with $X$, and $v$, $\omega$, $\hat{\chi}$ as above. Then there exists a solution $\phi \in \cH(X,\hat{\chi})^T$ to the modified $J$-equation \eqref{modified J-equation} if and only if $m_X>0$ and for all $p$-dimensional irreducible toric subvarieties $Y \subset X$ with $p=1,\ldots,n-1$ we have
\[
\int_Y \big( \big( c_X+\theta_v^X(\chi) \big) \chi^p-p \omega \wedge \chi^{p-1} \big)>0,
\]
where $\chi \in \b$ is any $T$-invariant K\"ahler form.
\end{thm}

As in \cite[Theorem 3]{CS17}, the proof proceeds by analyzing the behavior outside the union of toric divisors $D=\cup_i D_i$, adapting their method to the present context. Unlike \cite{CS17}, we need to control the Hamiltonian near each $D_i$ during the gluing process, by using a $T$-equivariant linearization of $\cO_X(D_i)$. For this reason, we make the additional assumption that $X$ is projective.

\subsection{Relation to extremal K\"ahler metrics} \label{Relation to extremal Kahler metrics}
Recall that extremal K\"ahler metrics are precisely the critical points of the modified $K$-energy $K_{v_{\rm ext}}$, which can be decomposed into the relative entropy and the modified $J$-functional. This decomposition is exactly the Chen--Tian formula \cite{Che00,Tia97a,Tia97b}:
\[
K_{v_{\rm ext}}=H+J_{-v_{\rm ext}}^{-\Ric(\hat{\chi})}.
\]
Using this formula, Li--Shi \cite{LS16} established a sufficient condition related to the modified $J$-equation under which the modified $K$-energy is coercive. Building on this perspective, Delcroix--Jubert \cite{DJ25} further showed that the $T^\C$-coercivity of the modified $K$-energy can be reduced to the existence problem for the modified $J$-equation. This reduction is a key step in connecting extremal K\"ahler metrics with solutions of the modified $J$-equation. Their work, however, addresses a more general setting, namely so-called weighted cscK metrics, which have recently attracted considerable attention in K\"ahler geometry (\eg \cite{AJL23,BJ25,HL24,Lah19,NJL25}). It is natural to expect that the theory of the modified $J$-equation can be further adapted in accordance with their framework, potentially yielding applications to the existence problem for such general canonical metrics. We leave a systematic exploration of such general cases to future work. At the same time, the combination of our results with \cite{DJ25} provides a framework for potential applications to extremal K\"ahler metrics. We explore these applications and related discussions in Section \ref{A numerical criterion for extremal Kahler metrics}.

\subsection{Contributions}
The main contributions of this paper concern the theory of the modified $J$-equation, with the principal new analytic ingredients being the local smoothing and gluing. Our analysis builds on the techniques developed in \cite{Che21, CS17}, which we further adapt to handle the lack of $T$-invariance and the degeneracy of the automorphism flow generated by the holomorphic vector field $v$. These issues form a major obstacle to the construction of subsolutions through local smoothing and gluing procedures. To overcome them, we employ several analytic tools. The first contribution consists of two techniques: the introduction of averaging with respect to the torus action, which restores $T$-invariance, and a careful control of the Hamiltonian near $T$-invariant subvarieties. These together make it possible to carry out the gluing procedures in a way compatible with the $T$-action (\cf Section \ref{Subsolutions} and Theorem \ref{subsolutions near subvarieties}).

The second is the {\it annulus trick}, used in the construction of solutions in Theorem \ref{existence and coercivity}. When establishing the local smoothing estimate, the key is to work in local coordinates that are compatible with the automorphism flow generated by the holomorphic vector field $v$, so that the modified structure is preserved. However, such coordinates can not be defined consistently near the zero set of $v$, where the automorphism action degenerates. To avoid this, we consider the product space $X \times A$, where $A=\{\tau \in \C \mid 1/2<|\tau|<2\}$ is an annulus equipped with a rotational vector field $w=\i \tau \frac{\p}{\p \tau}$. By lifting $v$ to $X \times A$ and adding $w$, we obtain a flow without fixed points. This extended flow allows us to define local coordinates adapted to it, which are then used to perform smoothing and gluing constructions. Combined with an averaging argument along the rotational direction, this yields a subsolution on the slice $X \simeq X \times \{1\} \subset X \times A$. This trick is not merely a computational convenience; it is essential in that it allows us to bypass the highly nontrivial analytic problem of establishing uniform gradient estimates along the modified $J$-flow (see Section \ref{Local smoothing} for more details).

\subsection{Organization of the paper}
The organization of this paper is as follows. In Section \ref{Preliminaries} we review several basic properties that will be used throughout the paper. Section \ref{The modified J-flow} is devoted to the study of the long-time behavior of the modified $J$-flow; in particular, we prove its convergence when $b \geq 0$ under the existence of subsolutions, thereby establishing Theorem \ref{existence of solutions to the generalized equation}. In Section \ref{Coercivity of the modified J-functional}, we show that the existence of solutions is equivalent to the coercivity of the functional $J_v^\omega$ (Theorem \ref{existence and coercivity}), from which Theorem \ref{solvability is independent of omega} follows as a corollary. Section \ref{A Nakai-Moishezon type criterion for smooth projective toric varieties} develops a Nakai--Moishezon type criterion for smooth projective toric varieties, providing the proof of Theorem \ref{NM criterion for smooth projective toric varieties}. Finally, in Section \ref{A numerical criterion for extremal Kahler metrics}, we discuss how Conjecture \ref{NM criterion} can be combined with the results of Delcroix--Jubert \cite{DJ25} to address the existence problem for extremal K\"ahler metrics.

\begin{ackn}
This author was supported by JSPS KAKENHI Grant Numbers 20K14308, 24K06730. The author is also greatful to Vamsi Pritham Pingali for pointing out an error of the proof of Theorem \ref{existence and coercivity} in the previous version.
\end{ackn}

\section{Preliminaries} \label{Preliminaries}
\subsection{Convexity properties} \label{Convexity properties} Many of the results in this subsection can be found in \cite[Section 2]{CS17}. Let $\Gamma \subset \R^n$ be the positive orthant and
\[
S_k(\lambda):=\sum_{1 \leq j_1<\ldots<j_k \leq n} \lambda_{j_1} \ldots \lambda_{j_k}, \quad \lambda \in \Gamma
\]
the elementary symmetric function of degree $k$. We have $S_0(\lambda)=1$ and $S_{-1}(\lambda)=0$. For any constant $b \in \R$ we define
\[
f_b(\lambda):=\frac{S_{n-1}(\lambda)}{S_n(\lambda)}+b \frac{S_0(\lambda)}{S_n(\lambda)}=\sum_{i=1}^n \frac{1}{\lambda_i}+\frac{b}{\lambda_1 \cdots \lambda_n}, \quad \lambda \in \Gamma.
\]
Let $\cM$ be the space of all $n \times n$ positive definite Hermitian matrices. Set
\[
F_b(A):=f_b(\lambda),
\]
where $\lambda=(\lambda_1,\ldots,\lambda_n)$ denotes the eigenvalues of $A$ (up to ordering). Then at a diagonal matrix $A \in \cM$ with eigenvalues $\lambda_i$, the derivative of $F_b$ is given by
\begin{equation} \label{derivative formulas}
\p_{ij} F_b=\p_i f_b \cdot \d_{ij}, \quad \p_{ij} \p_{rs} F_b=\p_i \p_r f_b \cdot \d_{ij} \d_{rs}+\frac{\p_i f_b-\p_j f_b}{\lambda_i-\lambda_j}(1-\d_{ij})\d_{ir}\d_{js},
\end{equation}
where we will regard the quotient appeared in the last term as the limit when $\lambda_i=\lambda_j$ \cite{And94,Ger96}. Also for $A \in \cM$ we define
\[
P(A):=\max_{k=1,\ldots,n} \sum_{i \neq k} \frac{1}{\lambda_k}, \quad Q(A):=F_0(A)=\sum_{i=1}^n \frac{1}{\lambda_i}, \quad R(A):=\max_I \sum_{i \in I} \frac{1}{\lambda_i},
\]
where the maximum in $R(A)$ is taken over all subsets $I \subset \{1,\ldots,n\}$ satisfying $|I|=n-2$. For $K>0$ we define convex subsets $\Gamma_K:=\{\lambda \in \Gamma|f_0(\lambda)<K\}$ and $\cM_K:=\{A \in \cM|Q(A)<K \}$. Although $P$ and $R$ are not necessarily smooth except at points where there is a spectral gap, they can be approximated by smooth convex functions, and hence are convex.

\begin{lem}
The functions $P$ and $R$ are convex on $\cM$.
\end{lem}
\begin{proof}
The proof is essentially the same as that of \cite[Proposition 2.2 (4)]{CLT24}. We define a continuous, symmetric, and convex function $g \colon \Gamma \to \R$ by $g(\lambda):=\max_{k=1,\ldots,n} \sum_{i \neq k} \frac{1}{\lambda_i}$. For any fixed $A, A' \in \cM$, we take $K>0$ so that $A, A' \in \cM_K$. Let us consider the convex subset $\Gamma_K \subset \Gamma$ corresponding to $\cM_K$. If $\lambda \in \Gamma_K$, then $\lambda_i>K^{-1}$ holds for all $i$. Thus, for any $r \in (0,\frac{1}{2}K^{-1})$, we can define the smoothing $g^{(r)}$ of $g$ at scale $r$ by
\[
g^{(r)}(\lambda):=\int_{\R^n} r^{-n} \rho \bigg( \frac{|\mu|}{r} \bigg) g(\lambda-\mu) d\mu, \quad \lambda \in \Gamma_K,
\]
where $\rho(t)$ is a smooth, non-negative function supported on $[0,1]$, constant on $[0,1/2]$, and normalized so that $\int_{B_1(0)} \rho(|\mu|)d\mu=1$. Then $g^{(r)}$ is a smooth function on $\Gamma_K$ and converges uniformly to $g$ on $\Gamma_K$ as $r \to 0$. For any $\lambda,\lambda' \in \Gamma_K$ and $t \in [0,1]$, the convexity of $g$ yields that
\[
\begin{aligned}
g^{(r)}(t\lambda+(1-t)\lambda')&=\int_{\R^n} r^{-n} \rho \bigg( \frac{|\mu|}{r} \bigg) g(t \lambda+(1-t)\lambda'-\mu) d\mu \\
&=\int_{\R^n} r^{-n} \rho \bigg( \frac{|\mu|}{r} \bigg) g(t (\lambda-\mu)+(1-t)(\lambda'-\mu)) d\mu \\
&\leq \int_{\R^n} r^{-n} \rho \bigg( \frac{|\mu|}{r} \bigg) \big( t g(\lambda-\mu)+(1-t)g(\lambda'-\mu) \big) d\mu \\
&=tg^{(r)}(\lambda)+(1-t)g^{(r)}(\lambda'),
\end{aligned}
\]
which shows that $g^{(r)}$ is convex on $\Gamma_K$. Let $\mathfrak{S}_n$ be the symmetric group of degree $n$ which acts linearly on $\Gamma_K$ by $\s \cdot \lambda:=(\lambda_{\s(1)},\ldots,\lambda_{\s(n)})$. Then the symmetrization $g_r(\lambda):=\frac{1}{n!} \sum_{\s \in \mathfrak{S}_n} g^{(r)}(\s \cdot \lambda)$ is convex and converges uniformly to $g$ on $\Gamma_K$ as $r \to 0$. Therefore, we can apply the result \cite{Spr05} to $g_r$, and deduce that the function $P_r$ on $\cM_K$ corresponding to $g_r$ is convex. Thus we have
\[
P_r(tA+(1-t)A') \leq tP_r(A)+(1-t)P_r(A'), \quad t \in [0,1].
\]
Taking the limit $r \to 0$, we obtain
\[
P(tA+(1-t)A') \leq tP(A)+(1-t)P(A'), \quad t \in [0,1].
\]
Since $A$ and $A'$ are arbitrarily chosen in $\cM$, this shows that $P$ is convex on $\cM$. The same argument applies to $R$ as well.
\end{proof}

For K\"ahler forms $\omega$, $\chi$ on a complex manifold $X$, we often write $g$, $h$ as the Riemannian metric corresponding to $\omega$, $\chi$ respectively, \ie
\[
\omega=\frac{\i}{2\pi} \sum_{i,j} g_{i\bar{j}} dz^i \wedge dz^{\bar{j}}, \quad \chi=\frac{\i}{2\pi} \sum_{i,j} h_{i\bar{j}} dz^i \wedge dz^{\bar{j}}
\]
in local coordinates. We define the matrix $A$ by $A_j^i:=g^{i\bar{k}}h_{j\bar{k}}$, and set
\[
F_{\omega,b}(\chi):=F_b(A), \quad P_\omega(\chi):=P(A), \quad Q_\omega(\chi):=Q(A), \quad R_\omega(\chi):=R(A),
\]
where the right hand side of each of the above expression is computed of the pointwise eigenvalues $\lambda_i$ of $A$. It is also worth nothing that the eigenvalues of $A$ do not depend on the choice of local coordinates $(z^1,\ldots,z^n)$, but only on $\omega$ and $\chi$. For a submanifold $Y \subset X$, we often consider the restriction of the operators $P_\omega$, $Q_\omega$ to $Y$:
\[
P_{Y,\omega}(\chi):=P_{\omega|_Y}(\chi|_Y), \quad Q_{Y,\omega}(\chi):=Q_{\omega|_Y}(\chi|_Y).
\]
Then we have the following:
\begin{prop} \label{restriction formulas}
\begin{enumerate}
\item For any submanifold $Y \subset X$ we have
\[
Q_{Y,\omega}(\chi) \leq Q_{X,\omega}(\chi).
\]
\item For any hypersurface $Y \subset X$ we have
\[
P_{Y,\omega}(\chi) \leq R_{X,\omega}(\chi).
\]
\end{enumerate}
\end{prop}
\begin{proof}
This follows immediately from the Courant--Fischer--Weyl type min-max principle established in \cite[Section 3]{Che21} (see also \cite[Section 2]{CLT24}), but for the reader’s convenience, we provide a detailed proof here.

Let $p:=\dim Y$. At each point $x \in Y$, we choose an $\omega$-orthonormal basis $e_1,\ldots,e_n$ of $T_x X$ and identify $T_x X \simeq \C^n$. Moreover, we choose the basis such that $e_1,\ldots,e_p$ span $T_x Y$, so that we obtain $T_x Y \simeq \C^p \simeq \C^p \times \{0\} \subset \C^n$. This allows us to identify $\chi$ with an $n \times n$ positive definite Hermitian matrix $A$. We shall regard $A$ as a bilinear form on $\C^n$.

For any positive integer $k \leq p$ and $k$-dimensional subspace $V$ of $\C^n$ there exists $n \times k$ matrix $U \in \Mat(n,k;\C)$ such that $\overline{U}^T U=I_k$ and a basis of $V$ consists of the columns of $U$. The representation of the restriction $A|_V$ with respect to the basis $U$ is then given by $\overline{U}^TAU$. In particular, we note that $\Tr (A|_V)^{-1}:=\Tr (\overline{U}^TAU)^{-1}$ is independent of the choice of $U$ and depends only on $A$ and $V$. Also if $V \subset \C^p$ then $U$ can be decomposed as
\[
U=
\begin{pmatrix}
U'\\
0
\end{pmatrix}
\]
for a matrix $U' \in \Mat(p,k;\C)$ such that $\overline{U'}^T U'=I_k$. Thus denoting by $A'$ the $p \times p$ upper-left block of $A$, we obtain $\overline{U}^TAU=\overline{U'}^TA'U'$, which shows that the definition of $\Tr(A|_V)^{-1}$ is compatible with restriction to $\C^p$.

\noindent
(1) We choose $U$ such that $\overline{U}^TAU=\diag(\lambda_1',\ldots,\lambda_p')$ and $S \in U(n)$ such that $S_{i \bar{j}}=U_{i \bar{j}}$ ($i=1,\ldots,n$; $j=1,\ldots,p$). Then by using the Schur--Horn theorem we know that the diagonal $(\lambda_1',\ldots,\lambda_p',(\overline{S}^TAS)_{p+1,\overline{p+1}},\ldots,(\overline{S}^TAS)_{n,\overline{n}})$ of the matrix $\overline{S}^TAS$ lies in the convex full of the vectors obtained by permuting the entries of $(\lambda_1,\ldots,\lambda_n)$, where $\lambda_i$ are eigenvalues of $A$. Let $I \subset \{1,\ldots,n\}$ be a subset with $|I|=p$. Combining with the convexity of the function $\Gamma \ni (\mu_1,\ldots,\mu_n) \mapsto \max_I \sum_{i \in I} \frac{1}{\mu_i} \in \R$ we know that
\[
\begin{aligned}
Q_{X,\omega}(\chi)&=\sum_{i=1}^n \frac{1}{\lambda_i} \geq \max_I \sum_{i \in I} \frac{1}{\lambda_i}=\max_{\substack{U \in \Mat(n,p;\C);\\ \overline{U}^TU=I_p, \;\overline{U}^TAU=\diag(\lambda_1',\ldots,\lambda_p')}} \sum_{i=1}^p \frac{1}{\lambda_i'}=\max_{V \subset \C^n} \Tr(A|_V)^{-1}\\
&\geq \Tr(A|_{\C^p})^{-1}=Q_{Y,\omega}(\chi),
\end{aligned}
\]
where the maximum is taken over all $p$-dimensional subspaces $V \subset \C^n$.

\noindent
(2) As in (1), we can deduce that
\[
R_{X,\omega}(\chi)=\max_{V \subset \C^n} \Tr(A|_V)^{-1} \geq \max_{V \subset \C^{n-1}} \Tr(A|_V)^{-1}=P_{Y,\omega}(\chi),
\]
where the maxima are taken over all $(n-2)$-dimensional subspaces $V \subset \C^n$ or $V \subset \C^{n-1}$.
\end{proof}

\subsubsection{The $b=-\e<0$ case}
Next we consider the case $b=-\e<0$.

\begin{prop} \label{properties for non-positive case}
For any $K>0$ there exists $\e_1=\e_1(n,K)>0$ such that for all $\e \in (0,\e_1)$, the function $f_{-\e} \colon \Gamma_K \to \R$ satisfies the following properties:
\begin{enumerate}
\item $f_{-\e}>0$.
\item $\p_i f_{-\e}<0$ for all $i$.
\item If $\lambda_i \geq \lambda_j$, then $\p_i f_{-\e} \geq \p_j f_{-\e}$.
\item $f_{-\e}$ is convex.
\end{enumerate}
\end{prop}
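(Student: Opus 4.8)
The plan is to treat each of the four properties as a perturbation of the known behavior of $f_0$, using the compactness afforded by the constraint $f_0(\lambda)<K$. The basic mechanism throughout is that $f_0(\lambda)<K$ forces each $\lambda_i$ to be bounded below: indeed $1/\lambda_i\leq f_0(\lambda)<K$ gives $\lambda_i>1/K$. It does \emph{not} bound $\lambda_i$ from above, so all the estimates must be uniform as some $\lambda_i\to\infty$; this is where one has to be slightly careful, and I expect it to be the main (though modest) technical point.

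First I would record the explicit derivatives of $f_b$: with $\sigma:=\lambda_1\cdots\lambda_n$ one has $\p_i f_b=-\lambda_i^{-2}-b\,\sigma^{-1}\lambda_i^{-1}$ and $\p_i\p_i f_b=2\lambda_i^{-3}+2b\,\sigma^{-1}\lambda_i^{-2}$, while for $i\neq j$, $\p_i\p_j f_b=b\,\sigma^{-1}\lambda_i^{-1}\lambda_j^{-1}$. For (1), write $f_{-\e}(\lambda)=f_0(\lambda)-\e\sigma^{-1}$ and note that on $\Gamma_K$ at least one eigenvalue, say $\lambda_k$, satisfies $\lambda_k\geq n/K$ (otherwise $f_0>K$), hence $f_0(\lambda)\geq 1/\lambda_k+\dots$; more simply, $\sigma^{-1}=\lambda_k^{-1}\prod_{i\neq k}\lambda_i^{-1}\leq (K/n)\cdot f_0(\lambda)^{\,n-1}\cdot(\text{something})$ — better: since every $\lambda_i>1/K$, and since there is an $i_0$ with $\lambda_{i_0}>1/f_0(\lambda)\geq 1/K$... the clean bound is $\sigma^{-1}\le K^{n-1}/\lambda_k$ for the largest $\lambda_k$, and $1/\lambda_k\le f_0$, giving $\e\sigma^{-1}\le \e K^{n-1}f_0$, so $f_{-\e}\geq(1-\e K^{n-1})f_0>0$ once $\e<K^{1-n}$. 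For (2), $\p_i f_{-\e}=\p_i f_0+\e\sigma^{-1}\lambda_i^{-1}=-\lambda_i^{-2}+\e\sigma^{-1}\lambda_i^{-1}=\lambda_i^{-1}(\e\sigma^{-1}-\lambda_i^{-1})$, so negativity is equivalent to $\e\sigma^{-1}<\lambda_i^{-1}$, i.e. $\e\prod_{j\neq i}\lambda_j^{-1}<1$; since each $\lambda_j>1/K$ this is guaranteed by $\e<K^{-(n-1)}$.

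For (3) I would compute, for $\lambda_i\geq\lambda_j$,
\[
\p_i f_{-\e}-\p_j f_{-\e}=\big(\lambda_j^{-2}-\lambda_i^{-2}\big)-\e\sigma^{-1}\big(\lambda_j^{-1}-\lambda_i^{-1}\big)=\big(\lambda_j^{-1}-\lambda_i^{-1}\big)\big(\lambda_j^{-1}+\lambda_i^{-1}-\e\sigma^{-1}\big),
\]
which is $\geq 0$ since the first factor is $\geq 0$ and, by the same bound as in (2), $\e\sigma^{-1}\leq \e K^{n-1}\lambda_i^{-1}\cdot\lambda_i K^{-1}$... concretely $\e\sigma^{-1}\le\e K^{-(n-1)}\cdot$(reciprocals) — the point is $\e\sigma^{-1}<\lambda_i^{-1}\le\lambda_i^{-1}+\lambda_j^{-1}$ for $\e$ small, as already arranged. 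The quotient $(\p_i f_{-\e}-\p_j f_{-\e})/(\lambda_i-\lambda_j)$ appearing in \eqref{derivative formulas} is then $\leq 0$, which is what (3) is really needed for downstream. Finally, for (4) I would show the Hessian of $f_{-\e}$ on $\Gamma_K$ is positive semidefinite by a continuity/compactness argument: $f_0$ is strictly convex on $\Gamma$ (each summand $1/\lambda_i$ is convex), and the Hessian of $-\e\sigma^{-1}$ has entries bounded by $C(n,K)\e$ uniformly on $\Gamma_K$ because every factor $\lambda_i^{-1}$ and $\sigma^{-1}$ is controlled there; one also needs the off-diagonal ``divided difference'' terms from \eqref{derivative formulas}, but these are handled by (3) together with the elementary fact that the matrix $D^2F_b$ at a diagonal $A$ is conjugate, via the orthogonal change of frame, to the data in \eqref{derivative formulas}. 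Then choose $\e_1(K)$ so small that $D^2 f_0-C(n,K)\e\,\mathrm{Id}\succeq 0$ on $\Gamma_K$; strict convexity of $f_0$ plus a positive lower bound for its smallest Hessian eigenvalue on the (non-compact but eigenvalue-bounded-below) region $\Gamma_K$ gives the required $\e_1$. The only real obstacle is making these bounds genuinely uniform in the unbounded directions $\lambda_i\to\infty$; but since all the dangerous terms carry at least one extra factor of $\sigma^{-1}$ or $\lambda_i^{-1}$, which only improve as $\lambda_i\to\infty$, this works out, and one takes $\e_1(K)$ to be the minimum of the finitely many thresholds produced in steps (1)–(4).
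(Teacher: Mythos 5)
Your treatments of (1), (2), and (3) are correct and, apart from superficial notational differences, essentially the same direct computation as the paper's (which works in terms of the symmetric functions $S_n$, $S_{n-1;i}=\prod_{j\neq i}\lambda_j$); no issues there.

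Item (4), however, has a genuine gap. You propose to prove convexity by showing $D^2 f_0 - C(n,K)\,\e\,\mathrm{Id}\succeq 0$ on $\Gamma_K$, invoking ``a positive lower bound for the smallest Hessian eigenvalue'' of $f_0$ on $\Gamma_K$. That lower bound is false: $D^2 f_0 = \mathrm{diag}(2\lambda_1^{-3},\dots,2\lambda_n^{-3})$, whose smallest eigenvalue is $2(\max_i\lambda_i)^{-3}$, and $\max_i\lambda_i$ is unbounded on $\Gamma_K$. So no choice of $\e>0$ makes the comparison against $\mathrm{Id}$ work. You do flag this at the end, noting the bad terms ``carry at least one extra factor of $\sigma^{-1}$ or $\lambda_i^{-1}$,'' and that heuristic is sound, but the argument as stated does not close. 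The fix --- which is what the paper actually does --- is to compare against the weighted norm $\sum_i w_i^2\lambda_i^{-3}$ rather than $|w|^2$. Concretely, from the explicit Hessian one gets
\[
w^{T}D^2 f_{-\e}\, w
 \;=\; 2\sum_i\frac{w_i^2}{\lambda_i^3}
 \;-\;\frac{\e}{\sigma}\sum_i\frac{w_i^2}{\lambda_i^2}
 \;-\;\frac{\e}{\sigma}\Bigl(\sum_i\frac{w_i}{\lambda_i}\Bigr)^{2},
\]
and using Cauchy--Schwarz $\bigl(\sum_i w_i/\lambda_i\bigr)^2\le n\sum_i w_i^2/\lambda_i^2$ together with $\sigma^{-1}\lambda_i^{-2}\le K^{n-1}\lambda_i^{-3}$ (valid because $\lambda_j>K^{-1}$ for every $j$) gives
\[
w^{T}D^2 f_{-\e}\,w \;\ge\; \bigl(2-(n+1)\e K^{n-1}\bigr)\sum_i\frac{w_i^2}{\lambda_i^3}\;\ge\;0
\]
for $\e$ small. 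Finally, a minor separate confusion: the ``divided-difference'' terms from \eqref{derivative formulas} concern convexity of the matrix function $F_{-\e}$, not of the scalar function $f_{-\e}$ on $\Gamma_K$; they are irrelevant to item (4), which is a statement purely about $f_{-\e}$ on $\R^n$. The passage from convexity plus monotonicity (3) of $f_{-\e}$ to convexity of $F_{-\e}$ is a standard separate step and is not what Proposition~\ref{properties for non-positive case} asserts.
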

\begin{proof}
For notational convenience let
\[
S_{n-1;i}(\lambda):=\frac{S_n(\lambda)}{\lambda_i}.
\]
Since $\lambda_i>K^{-1}$ from the condition $f_0(\lambda)<K$ we have
\[
f_{-\e}(\lambda)=\frac{S_{n-1}(\lambda)-\e}{S_n(\lambda)} \geq \frac{(n-1)K^{-n+1}-\e}{S_n(\lambda)},
\]
\[
\p_i f_{-\e}(\lambda)=\frac{-S_{n-1;i}(\lambda)+\e}{\lambda_i S_n(\lambda)} \leq \frac{-K^{-n+1}+\e}{\lambda_i S_n(\lambda)}.
\]
Also if $\lambda_i \geq \lambda_j$ then
\[
\begin{aligned}
\p_i f_{-\e}(\lambda)-\p_j f_{-\e}(\lambda)&=\frac{(\lambda_i-\lambda_j)(S_{n-1;i}(\lambda)+S_{n-1;j}(\lambda)-\e)}{\lambda_i \lambda_j S_n(\lambda)} \\
& \geq \frac{(\lambda_i-\lambda_j)(2K^{-n+1}-\e)}{\lambda_i \lambda_j S_n(\lambda)}.
\end{aligned}
\]
Finally, we compute
\[
\p_i \p_j f_{-\e}(\lambda)=\frac{2 S_{n-1;i}(\lambda) \d_{ij}-\e (1+\d_{ij})}{\lambda_i \lambda_j S_n(\lambda)}.
\]
Thus for all $w=(w_1,\ldots,w_n) \in \R^n$ we have
\[
\begin{aligned}
\sum_{i,j} w_i w_j \p_i \p_j f_{-\e}(\lambda)&=\frac{2}{S_n(\lambda)} \sum_i \frac{w_i^2}{\lambda_i^2} S_{n-1;i}(\lambda)-\frac{\e}{S_n(\lambda)} \bigg( \sum_i \frac{w_i}{\lambda_i} \bigg)^2-\frac{\e}{S_n(\lambda)} \sum_i \frac{w_i^2}{\lambda_i^2} \\
& \geq \frac{2K^{-n+1}-\e (n+1)}{S_n(\lambda)} \sum_i \frac{w_i^2}{\lambda_i^2}.
\end{aligned}
\]
The above computations show that all of the required statements hold if $\e$ is sufficiently small.
\end{proof}
Thus combining with \eqref{derivative formulas}, we know that $F_{-\e} \colon \cM_K \to \R$ is convex for $\e \in (0,\e_1)$ (\cf \cite{And94,Ger96,Spr05}). However, we need a stronger convexity property as follows:

\begin{prop}[\cite{CS17}, Lemma 9] \label{strong convexity}
For any $K>0$ there exists $\e_2(n,K)>0$ such that for all $\e \in (0,\e_2)$, diagonal matrix $A \in \cM_K$ with entries $\lambda_i$ and Hermitian matrix $B_{ij}$ we have
\[
\sum_{p,q,r,s} B_{rs} \overline{B_{qp}} \p_{pq} \p_{rs} F_{-\e}(A)+\sum_{i,j}|B_{ij}|^2 \frac{\p_{ii} F_{-\e}(A)}{\lambda_j} \geq 0.
\]
\end{prop}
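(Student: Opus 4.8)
The plan is to establish Proposition~\ref{strong convexity} by reducing it to the explicit second-derivative formulas \eqref{derivative formulas}, separating the contribution of the diagonal second derivatives $\p_i\p_r f_{-\e}$ from that of the off-diagonal ``first-divided-difference'' terms $(\p_i f_{-\e}-\p_j f_{-\e})/(\lambda_i-\lambda_j)$, and then handling each piece using the estimates already derived in the proof of Proposition~\ref{properties for non-positive case}. Concretely, substituting \eqref{derivative formulas} into the left-hand side, the quadratic form $\sum_{p,q,r,s} B_{rs}\overline{B_{qp}}\,\p_{pq}\p_{rs}F_{-\e}(A)$ breaks into a ``diagonal'' part $\sum_{i,r}\overline{B_{ii}}B_{rr}\,\p_i\p_r f_{-\e}$ coming from the $\d_{ij}\d_{rs}$ factor, and an ``off-diagonal'' part $\sum_{i\neq j}|B_{ij}|^2\,\frac{\p_i f_{-\e}-\p_j f_{-\e}}{\lambda_i-\lambda_j}$ coming from the $\d_{ir}\d_{js}$ factor (here using that $B$ is Hermitian so $B_{rs}\overline{B_{qp}}$ contracts correctly). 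The extra term $\sum_{i,j}|B_{ij}|^2\,\p_{ii}F_{-\e}(A)/\lambda_j = \sum_{i,j}|B_{ij}|^2\,\p_i f_{-\e}/\lambda_j$ must then be combined with these.

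First I would dispose of the off-diagonal terms: for $i\neq j$ one has the clean algebraic identity
\[
\frac{\p_i f_{-\e}(\lambda)-\p_j f_{-\e}(\lambda)}{\lambda_i-\lambda_j}
=\frac{S_{n-1;i}(\lambda)+S_{n-1;j}(\lambda)-\e}{\lambda_i\lambda_j S_n(\lambda)},
\]
which is the computation already performed in Proposition~\ref{properties for non-positive case}(3) divided by $\lambda_i-\lambda_j$, so it extends continuously across $\lambda_i=\lambda_j$ and is bounded below by $(2K^{-n+1}-\e)/(\lambda_i\lambda_j S_n(\lambda))>0$ once $\e<2K^{-n+1}$. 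Hence the off-diagonal contribution to the first sum is already nonnegative, and it can be discarded (or, if one wants the sharper statement, kept as a positive reserve). The real work is therefore to show
\[
\sum_{i,r}\overline{B_{ii}}B_{rr}\,\p_i\p_r f_{-\e}(\lambda)
+\sum_{i,j}|B_{ij}|^2\,\frac{\p_i f_{-\e}(\lambda)}{\lambda_j}\ \geq\ 0,
\]
and since only the moduli $|B_{ij}|$ enter and the diagonal piece only sees $B_{ii}$, it suffices to prove this with $B_{ij}=0$ for $i\neq j$ absorbed trivially and to treat the genuinely diagonal reduced inequality $\sum_{i,r}\overline{B_{ii}}B_{rr}\,\p_i\p_r f_{-\e}+\sum_{i}|B_{ii}|^2\,\p_i f_{-\e}/\lambda_i\geq0$ together with the manifestly nonnegative leftover $\sum_{i\neq j}|B_{ij}|^2\,\p_i f_{-\e}/\lambda_j$ — wait, that last term is \emph{negative} since $\p_i f_{-\e}<0$, so it cannot simply be dropped; it must be paired against the positive off-diagonal first sum. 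So the correct bookkeeping is: off-diagonal terms combine to $\sum_{i\neq j}|B_{ij}|^2\big(\tfrac{\p_i f_{-\e}-\p_j f_{-\e}}{\lambda_i-\lambda_j}+\tfrac{\p_i f_{-\e}}{\lambda_j}\big)$, and one checks this bracket is $\geq0$ for small $\e$ using $\p_i f_{-\e}/\lambda_j=(-S_{n-1;i}+\e)/(\lambda_i\lambda_j S_n)\geq (-S_{n-1;i}-S_{n-1;j}+\ \text{positive})/(\lambda_i\lambda_j S_n)$-type estimates, so that the sum of the two fractions has numerator $\geq (2K^{-n+1}-\e) + (\text{something bounded}) $ — this needs the uniform lower bounds $\lambda_i>K^{-1}$ and upper bounds on $S_{n-1;i}/S_n=f$ restricted to $\Gamma_K$.

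Then I would handle the purely diagonal inequality $\sum_{i,r}\overline{B_{ii}}B_{rr}\,\p_i\p_r f_{-\e}+\sum_i|B_{ii}|^2\,\p_i f_{-\e}/\lambda_i\geq0$. Writing $\xi_i:=B_{ii}$ (complex), using $\p_i\p_r f_{-\e}(\lambda)=\frac{2S_{n-1;i}(\lambda)\d_{ir}-\e(1+\d_{ir})}{\lambda_i\lambda_r S_n(\lambda)}$ and $\p_i f_{-\e}(\lambda)/\lambda_i=\frac{-S_{n-1;i}(\lambda)+\e}{\lambda_i^2 S_n(\lambda)}$ from the earlier computations, the left side becomes
\[
\frac{1}{S_n(\lambda)}\Bigg[\sum_i\frac{|\xi_i|^2}{\lambda_i^2}\big(2S_{n-1;i}-S_{n-1;i}\big)-\e\Big|\sum_i\tfrac{\xi_i}{\lambda_i}\Big|^2-\e\sum_i\tfrac{|\xi_i|^2}{\lambda_i^2}+\e\sum_i\tfrac{|\xi_i|^2}{\lambda_i^2}\Bigg]
=\frac{1}{S_n(\lambda)}\Bigg[\sum_i\tfrac{|\xi_i|^2}{\lambda_i^2}S_{n-1;i}-\e\Big|\sum_i\tfrac{\xi_i}{\lambda_i}\Big|^2\Bigg],
\]
and since $S_{n-1;i}(\lambda)\geq (n-1)K^{-n+1}$ on $\Gamma_K$ while $\big|\sum_i \xi_i/\lambda_i\big|^2\leq n\sum_i |\xi_i|^2/\lambda_i^2$ by Cauchy--Schwarz, this is $\geq \frac{(n-1)K^{-n+1}-n\e}{S_n(\lambda)}\sum_i|\xi_i|^2/\lambda_i^2\geq0$ provided $\e< \tfrac{(n-1)K^{-n+1}}{n}$. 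Taking $\e_2:=\min\{\e_1(K),\ 2K^{-n+1},\ (n-1)K^{-n+1}/n\}$ (adjusted by whatever the off-diagonal bracket computation requires) then yields the claim. The step I expect to be the main obstacle is verifying that the combined off-diagonal bracket $\frac{\p_i f_{-\e}-\p_j f_{-\e}}{\lambda_i-\lambda_j}+\frac{\p_i f_{-\e}}{\lambda_j}$ is nonnegative uniformly over $\Gamma_K$ for small $\e$: the term $\p_i f_{-\e}/\lambda_j$ is negative and, unlike the diagonal case, it is not obviously dominated by the divided difference, so one must use both the lower bound $\lambda_r>K^{-1}$ \emph{and} an upper bound on the $\lambda_r$ (or rather on $S_{n-1;i}/(\lambda_j S_n)$, which is controlled because $f_0<K$ forces each $1/\lambda_r<K$, hence $S_{n-1;i}/S_n=1/\lambda_i<K$) to see that the positive part $(S_{n-1;i}+S_{n-1;j})/(\lambda_i\lambda_j S_n)=\tfrac1{\lambda_j}(\tfrac1{\lambda_i}... )$ — in fact $\frac{S_{n-1;i}-S_{n-1;i}}{\cdots}$ cancellation shows the $\e=0$ bracket equals $\frac{S_{n-1;j}}{\lambda_i\lambda_j S_n}=\frac{1}{\lambda_i\lambda_i\lambda_j}\cdot\lambda_i/S_n\cdot$... i.e.\ it is \emph{exactly} $\frac{S_{n-1;j}(\lambda)}{\lambda_i\lambda_j S_n(\lambda)}\geq 0$, so at $\e=0$ the whole thing is nonnegative and a perturbation argument closes it — but I would double-check this cancellation carefully, as it is the crux of why the lemma is true.
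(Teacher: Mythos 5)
Your decomposition of the quadratic form into a diagonal part (in $B_{ii},B_{rr}$) and an off-diagonal part (in $|B_{ij}|^2$) via \eqref{derivative formulas} is the right structure, and your treatment of the diagonal part is essentially correct: the $\e\sum_i|B_{ii}|^2/\lambda_i^2$ contribution coming from the $\d_{ir}$-part of $\p_i\p_r f_{-\e}$ cancels exactly against the $\e$-term in $\p_i f_{-\e}/\lambda_i$, leaving
\[
\frac{1}{S_n(\lambda)}\Bigg[\sum_i\frac{|B_{ii}|^2}{\lambda_i^2}\,S_{n-1;i}(\lambda)-\e\Big|\sum_i\frac{B_{ii}}{\lambda_i}\Big|^2\Bigg],
\]
which is nonnegative for small $\e$ by Cauchy--Schwarz and $S_{n-1;i}\geq K^{-n+1}$ on $\Gamma_K$. (One small slip here: $S_{n-1;i}=\prod_{k\neq i}\lambda_k\geq K^{-n+1}$, not $(n-1)K^{-n+1}$; this only shifts the threshold, not the conclusion.) Note the paper itself does not reprove this lemma but cites \cite{CS17}, Lemma 9, so I am judging your argument on its own terms.

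The genuine gap is exactly where you flag it, and it cannot be closed by the route you propose. You check that the off-diagonal bracket at $\e=0$ equals $S_{n-1;j}/(\lambda_i\lambda_j S_n)=1/(\lambda_i\lambda_j^2)>0$ and then say ``a perturbation argument closes it.'' But on $\Gamma_K$ the eigenvalues are bounded below by $K^{-1}$ while having no upper bound, so $1/(\lambda_i\lambda_j^2)$ has no uniform positive lower bound; a small $\e$-perturbation could therefore flip the sign where the $\lambda_i$ are large, and the argument degenerates entirely when $K=\infty$ (which the statement permits). What actually saves the lemma is that the $\e$-dependence cancels \emph{identically}: from the two formulas
\[
\frac{\p_i f_{-\e}-\p_j f_{-\e}}{\lambda_i-\lambda_j}=\frac{S_{n-1;i}+S_{n-1;j}-\e}{\lambda_i\lambda_j S_n},
\qquad
\frac{\p_i f_{-\e}}{\lambda_j}=\frac{-S_{n-1;i}+\e}{\lambda_i\lambda_j S_n},
\]
their sum is $\dfrac{S_{n-1;j}}{\lambda_i\lambda_j S_n}=\dfrac{1}{\lambda_i\lambda_j^2}$ for \emph{every} $\e$, not just $\e=0$. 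So the off-diagonal contribution is manifestly nonnegative with no restriction on $\e$, and $\e_2$ is dictated solely by the diagonal part. Once you see this exact cancellation the proof is complete and clean; without it what you have is a sketch with a hole in its crux. (Also, your displayed chain ``$\frac{1}{\lambda_i\lambda_i\lambda_j}\cdot\lambda_i/S_n\cdots$'' has the wrong powers of the $\lambda$'s — the correct simplification is $\frac{1}{\lambda_i\lambda_j^2}$.)
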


We also need the following estimate to deal with the modified $J$-flow with negative twist $b=-\e<0$.

\begin{lem} \label{estimate along continuous path}
For any $K>0$ and $C>0$ there exists a constant $\e_3=\e_3(n,K,C)>0$ such that for any $\e \in (0,\e_3)$, we have the following: if the continuous path $A_t \in \cM$ ($t \in [0,1]$) satisfies
\[
Q(A_0)<K, \quad F_{-\e}(A_t)<K+2C \quad (t \in [0,1]),
\]
then $Q(A_t)<K+3C$ for $t \in [0,1]$.
\end{lem}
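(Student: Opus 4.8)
The plan is to treat this as a purely scalar \emph{a priori} estimate that never leaves the realm of eigenvalues, using the elementary identity
\[
F_0(A)=F_{-\e}(A)+\frac{\e}{\lambda_1\cdots\lambda_n}
\]
for $A\in\cM$ with eigenvalues $\lambda_1,\dots,\lambda_n$, together with the trivial remark that an upper bound on $F_0(A)$ forces a lower bound on $\lambda_1\cdots\lambda_n$. Concretely, if $F_0(A)=\sum_i\lambda_i^{-1}\le M$ then each $\lambda_i\ge M^{-1}$, so $\lambda_1\cdots\lambda_n\ge M^{-n}$ and hence $0\le F_0(A)-F_{-\e}(A)\le\e M^n$. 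Thus the ``error'' between $F_0$ and $F_{-\e}$ is small once $\e$ is small, provided $F_0$ stays bounded.

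Next I would dispose of the case $K=\infty$, which is vacuous: both hypotheses hold automatically for any path in $\cM$ and the conclusion $F_0(A_t)<\infty$ is automatic. So assume $K<\infty$, set $M:=K+3C_\theta$, and choose $\e_3:=C_\theta M^{-n}>0$. Fix $\e\in[0,\e_3)$ and a path $A_t$ as in the statement. The function $t\mapsto F_0(A_t)$ is continuous on $[0,1]$ because the eigenvalues depend continuously on $A\in\cM$ and remain positive, and $F_0(A_0)<K<M$. I then run a connectedness argument: if $F_0(A_t)\ge M$ somewhere, let $t_*$ be the smallest such $t$ (the set $\{t:F_0(A_t)\ge M\}$ is closed and avoids $0$, hence has a positive minimum), so that $F_0(A_{t_*})=M$ by continuity while $F_{-\e}(A_{t_*})<K+2C_\theta$ by hypothesis. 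The bound from the first paragraph, applied at $A_{t_*}$ with this $M$, gives
\[
M=F_0(A_{t_*})<(K+2C_\theta)+\e M^n<(K+2C_\theta)+C_\theta=M,
\]
a contradiction; hence $F_0(A_t)<K+3C_\theta$ for all $t\in[0,1]$, as claimed.

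I do not expect any real obstacle here. The only points requiring care are: that $F_{-\e}$ and $F_0$ are defined on all of $\cM$, so that none of the cone-membership issues behind Propositions \ref{properties for non-positive case} and \ref{strong convexity} intervene; that $t\mapsto F_0(A_t)$ is genuinely continuous on $[0,1]$; and that $\e_3$ is chosen so that the error term $\e M^n$ is \emph{strictly} smaller than the gap $C_\theta$ separating the two thresholds $K+2C_\theta$ and $K+3C_\theta$. The mild subtlety, as in \cite[Lemma~19]{CS17}, is that one must control $F_0$ along the \emph{whole} path by a bootstrap rather than only at the endpoints; the identity above makes this bootstrap immediate once the eigenvalue lower bound is in hand.
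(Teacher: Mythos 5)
Your argument is correct and is essentially the paper's proof: both reduce to the boundary case $F_0(A_{t_*})=K+3C_\theta$ via a first-crossing/continuity argument, bound $1/(\lambda_1\cdots\lambda_n)$ from the upper bound on $F_0$, and contradict the hypothesis $F_{-\e}(A_{t_*})<K+2C_\theta$ using the identity $F_0=F_{-\e}+\e/(\lambda_1\cdots\lambda_n)$. The only (immaterial) difference is that the paper bounds the product of eigenvalues via AM--GM, giving $1/(\lambda_1\cdots\lambda_n)\leq n^{-n}(K+3C_\theta)^n$ and hence a slightly larger admissible $\e_3$, whereas you use the cruder termwise bound $\lambda_i\geq(K+3C_\theta)^{-1}$; both choices of $\e_3$ work equally well.
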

\begin{proof}
We follow closely to the argument \cite[Lemma 19]{CS17}. Let $\lambda_i$ be the eigenvalue of $A_t$. Assume that $Q(A_t)=K+3C$ for some $t \in (0,1]$. Then AM-GM inequality shows that
\[
\frac{1}{\lambda_1 \cdots \lambda_n} \leq n^{-n} \bigg( \sum_{i=1}^n \frac{1}{\lambda_i} \bigg)^n=n^{-n}(K+3C)^n.
\]
If we set $\e_3:=\frac{n^n C}{2(K+3C)^n}$ then for any $\e \in (0,\e_3)$ we have
\[
F_{-\e}(A_t)=\sum_{i=1}^n \frac{1}{\lambda_i}-\frac{\e}{\lambda_1 \cdots \lambda_n} \geq K+3C-\e n^{-n}(K+2C)^n>K+2C
\]
which contradicts the assumption. So we get $Q(A_t)<K+3C$ for all $t \in [0,1]$.
\end{proof}

\begin{lem} \label{change of omega}
Let $\omega_1$, $\omega_2$, $\chi$ be K\"ahler forms on $X$. Assume that $Q_{\omega_2}(\chi) \leq K$ and
\[
(1-\s) \omega_1 \leq \omega_2 \leq (1+\s) \omega_1
\]
for some $K>0$ and $0<\s<1/2$. Then we have
\[
|Q_{\omega_1}(\chi)-Q_{\omega_2}(\chi)| \leq 2K \s, \quad |R_{\omega_1}(\chi)-R_{\omega_2}(\chi)| \leq 2K \s.
\]
\end{lem}
\begin{proof}
Let $\lambda_1 \leq \ldots \leq \lambda_n$ and $\mu_1 \leq \ldots \leq \mu_n$ be eigenvalues of $\chi$ with respect to $\omega_1$ and $\omega_2$ respectively. Then the assumption yields that
\[
(1-\s) \mu_i \leq \lambda_i \leq (1+\s) \mu_i, \quad i=1,\ldots,n.
\]
From the assumption $Q_{\omega_2}(\chi) \leq K$ we get
\[
|Q_{\omega_1}(\chi)-Q_{\omega_2}(\chi)| \leq \sum_{i=1}^n \bigg| \frac{1}{\lambda_i}-\frac{1}{\mu_i} \bigg|=\sum_{i=1}^n \frac{|\lambda_i-\mu_i|}{\lambda_i \mu_i} \leq \frac{\s}{1-\s} \sum_{i=1}^n \frac{1}{\mu_i} \leq 2K \s.
\]
The proof for $R$ is similar.
\end{proof}

\begin{lem} \label{F to P}
Let $\omega$ and $\chi$ be K\"ahler forms and $K>0$ a constant. Assume that $Q_\omega(\chi) \leq K$. Then there exists $\e_4=\e_4(n,K)>0$ such that for all $\e \in (0,\e_4)$ we have
\[
P_\omega(\chi) \leq F_{\omega,-\e}(\chi).
\]
\end{lem}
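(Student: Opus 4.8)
The plan is to reduce the asserted inequality to the elementary eigenvalue bound $\lambda_i \geq K^{-1}$ that has already been used repeatedly above. Fix a point of $\Omega$ and let $\lambda_1 \leq \cdots \leq \lambda_n$ be the eigenvalues of the endomorphism $A$ associated to $\chi$ with respect to $\omega$ at that point. The first step is to rewrite both sides in terms of $Q_\omega(\chi) = \sum_{i=1}^n \lambda_i^{-1}$. By definition the maximum defining $P_\omega(\chi)$ is attained by deleting the smallest reciprocal, namely $\lambda_n^{-1}$, so $P_\omega(\chi) = \sum_{i=1}^{n-1}\lambda_i^{-1} = Q_\omega(\chi) - \lambda_n^{-1}$; on the other hand $F_{\omega,-\e}(\chi) = \Tr_\omega \chi - \e\, \omega^n/\chi^n = Q_\omega(\chi) - \e(\lambda_1\cdots\lambda_n)^{-1}$. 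Hence $P_\omega(\chi) \leq F_{\omega,-\e}(\chi)$ is equivalent to $\e(\lambda_1\cdots\lambda_n)^{-1} \leq \lambda_n^{-1}$, i.e.\ to $\e \leq \lambda_1\cdots\lambda_{n-1}$.

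Next I would invoke the hypothesis $Q_\omega(\chi) \leq K$: since $\sum_i \lambda_i^{-1} \leq K$ forces $\lambda_i^{-1} \leq K$, that is $\lambda_i \geq K^{-1}$, for every $i$, one gets $\lambda_1\cdots\lambda_{n-1} \geq K^{-(n-1)}$. Therefore it suffices to set $\e_4 = \e_4(K) := K^{-(n-1)}$: for every $\e \in [0,\e_4)$ we have $\e < K^{-(n-1)} \leq \lambda_1\cdots\lambda_{n-1}$, which is exactly the inequality extracted in the first step (and when $\e = 0$ the claim reduces to the trivial $P_\omega(\chi) \leq Q_\omega(\chi)$, since $F_{\omega,0}(\chi) = Q_\omega(\chi)$). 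As this bound holds pointwise on $\Omega$ with a constant depending only on $K$ and $n$, the lemma follows.

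There is no genuine obstacle here; the computation is entirely elementary. The only points requiring a little care are (i) identifying which term is dropped in the definition of $P_\omega(\chi)$ — one maximizes $\sum_{i \neq k}\lambda_i^{-1}$ precisely by removing $\lambda_n^{-1}$, the reciprocal of the largest eigenvalue — and (ii) the bookkeeping that makes the $Q_\omega(\chi)$ contributions cancel, so that the inequality collapses to $\e \leq \lambda_1\cdots\lambda_{n-1}$.
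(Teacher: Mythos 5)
Your proof is correct and is essentially the paper's own argument, just phrased slightly differently: the paper writes the difference as $F_{\omega,-\e}(\chi)-P_\omega(\chi)=\frac{1}{\lambda_n}\bigl(1-\frac{\e}{\lambda_1\cdots\lambda_{n-1}}\bigr)$ and bounds the parenthesis using $\lambda_i\geq K^{-1}$, while you cancel $Q_\omega(\chi)$ on both sides to reduce to $\e\leq\lambda_1\cdots\lambda_{n-1}$, but these are the same computation and both arrive at $\e_4=K^{-(n-1)}$.
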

\begin{proof}
Let $\lambda_1 \leq \ldots \leq \lambda_n$ be the eigenvalues of $\chi$ with respect to $\omega$. By using $\lambda_i \geq K^{-1}$ we compute
\[
F_{\omega,-\e}(\chi)=P_\omega(\chi)+\frac{1}{\lambda_n} \bigg(1-\frac{\e}{\lambda_1 \cdots \lambda_{n-1}} \bigg) \geq P_\omega(\chi)+\frac{1}{\lambda_n}(1-\e K^{n-1}).
\]
So if we set $\e_4=K^{-n+1}$ then for any $\e \in (0,\e_4)$, we have
\[
P_\omega (\chi) \leq F_{\omega,-\e}(\chi).
\]
\end{proof}

\subsubsection{The $b \geq 0$ case}
Now we consider the case $b \geq 0$. In this case, the situation is considerably easier to handle compared to the case $b=-\e<0$.
\begin{prop} \label{properties for semipositive case}
For $b \geq 0$ the function $f_b \colon \Gamma \to \R$ satisfies the following properties:
\begin{enumerate}
\item $f_b>0$.
\item $\p_i f_b<0$ for all $i$.
\item If $\lambda_i \geq \lambda_j$, then $\p_i f_b \geq \p_j f_b$.
\item $f_b$ is convex.
\end{enumerate}
\end{prop}
\begin{proof}
The calculation in Proposition \ref{properties for non-positive case} can be applied verbatim to this case.
\end{proof}

Thus the propeties (3), (4) together with \eqref{derivative formulas} implies that $F_b \colon \cM \to \R$ is convex. A strong convexity property holds in the same manner:

\begin{prop}[\cite{CS17}, Lemma 8] \label{strong convexity for positive b}
For any diagonal matrix $A \in \cM$ with eigenvalues $\lambda_i$ and Hermitian matrix $B_{ij}$ we have
\[
\sum_{p,q,r,s} B_{rs} \overline{B_{qp}} \p_{pq} \p_{rs} F_b(A)+\sum_{i,j}|B_{ij}|^2 \frac{\p_{ii} F_b(A)}{\lambda_j} \geq 0.
\]
\end{prop}

\subsection{Hamiltonians} \label{Hamiltonians}
Let $X$ be a compact complex manifold, $T \subset \Aut_{\rm red}(X)$ a real torus, $\hat{\chi}$ a $T$-invariant K\"ahler form on $X$, and $v$ a holomorphic vector field satisfying $\Im(v) \in \ft$.
\begin{lem} \label{T-invariance and C0-estimate of Hamiltonians}
For any $\chi_\phi=\hat{\chi}+\dd \phi$ with $\phi \in \cH(X,\hat{\chi})^T$, the following statement holds:
\begin{enumerate}
\item The function $\theta_v^X(\chi_\phi)$ is $T$-invariant.
\item We have $\theta_v^X(\chi_\phi)=\theta_v^X(\hat{\chi})+v(\phi)$. In particular, $\max_X \theta_v(\chi_\phi)$ as well as $\min_X \theta_v(\chi_\phi)$ is independent of the choice of $\phi \in \cH(X,\hat{\chi})^T$.
\end{enumerate}
\end{lem}
\begin{proof}
(1) By \eqref{definition and normalization of Hamiltonians}, we have
\[
i_{\Im(v)} \chi_\phi=\frac{1}{4 \pi}d \theta_v^X(\chi_\phi),
\]
which shows that $d \theta_v^X(\chi_\phi)$ is $T$-invariant. So for any $\tau \in T$ we have $d \tau^\ast \theta_v^X(\chi_\phi)=\tau^\ast d \theta_v^X(\chi_\phi)=d \theta_v^X(\chi_\phi)$, from which it follows that $\tau^\ast \theta_v^X(\chi_\phi)-\theta_v^X(\chi_\phi)=C$ for some constant $C \in \R$. By using \eqref{definition and normalization of Hamiltonians} and the fact that $\chi_\phi$ is $T$-invariant, integrating over $X$ with respect to $\chi_\phi^n$ yields $C=0$. Hence $\theta_v^X(\chi_\phi)$ is $T$-invariant.

\noindent
(2) The result is obtained in \cite[Corollary 5.3]{Zhu00}. Applying $i_v$ to $\chi_\phi=\hat{\chi}+\dd \phi$ we observe that $\theta_v^X(\chi_\phi)=\theta_v^X(\hat{\chi})+v(\phi)$ modulo additive constants. Thus
\[
\theta_v^X(\chi_{t\phi})=\theta_v^X(\hat{\chi})+t v(\phi)+C_t, \quad t \in [0,1]
\]
for some $t$-dependent constant $C_t$. Differentiating $\int_X \theta_v^X(\chi_{t\phi}) \chi_{t \phi}^n=0$ in $t$ and integrating by parts yield that
\[
0=\ddt \int_X \theta_v^X(\chi_{t\phi}) \chi_{t \phi}^n=\int_X \big( v(\phi)+\ddt C_t+\theta_v^X(\chi_{t\phi}) \Delta_{t\phi} \phi \big) \chi_{t\phi}^n=\b^n \ddt C_t.
\]
Thus we have $C_t=C_0=0$ which shows the first assertion. For the second assertion we take a point $x_0 \in X$ where $\theta_v(\chi_\phi)$ achieves the maximum. Then $\bp \theta_v(\chi_\phi)(x_0)=0$, so the vector $v$ vanishes at $x_0$. Combining with $\theta_v^X(\chi_\phi)=\theta_v^X(\hat{\chi})+v(\phi)$ we observe that
\[
\max_X \theta_v^X(\chi_\phi)=\theta_v^X(\chi_\phi)(x_0)=\theta_v^X(\hat{\chi})(x_0) \leq \max_X \theta_v^X(\hat{\chi}).
\]
By changing the role of $\hat{\chi}$ and $\chi_\phi$ we obtain $\max_X \theta_v^X(\chi_\phi)=\max_X \theta_v^X(\hat{\chi})$. The proof for $\min_X \theta_v^X(\chi_\phi)$ is similar.
\end{proof}
From the above lemma we know that the constant
\[
m_X=c_X+\min_X \theta_v^X(\chi_\phi)
\]
is independent of a choice of $\phi \in \cH(X,\hat{\chi})^T$. We fix a uniform constant $C_\theta>0$ (independent of $\phi \in \cH(X,\hat{\chi})^T$) such that
\begin{equation} \label{Ctheta}
|\theta_v^X(\chi_\phi)| \leq C_\theta.
\end{equation}

\subsection{Subsolutions} \label{Subsolutions}
In this subsection we study general properties for subsolutions \eqref{subsolution} related to the regularized maximum and averaging over the torus $T$. We first discuss how the subsolution condition can be expressed in terms of the operator $P_\omega$.
\begin{prop}
The subsolution condition \eqref{subsolution for generalized equation} for $\chi:=\chi_{\underline{\phi}}$ is equivalent to
\[
P_\omega(\chi)-\theta_v^X(\chi)<c.
\]
In this case, we have
\begin{equation} \label{p positivity}
\big(c+\theta_v^X(\chi) \big)\chi^p-p\omega \wedge \chi^{p-1}>0
\end{equation}
for all $p=1,\ldots,n-1$.
\end{prop}
\begin{proof}
This follows immediately from the computation in \cite[Section 1]{Che21}. We first note that the positivity condition \eqref{p positivity} (in the sense of \cite[Chapter III, Section 1.A]{Dem12}) means that
\[
\big(c+\theta_v^X(\chi) \big)\chi^p-p\omega \wedge \chi^{p-1} \wedge \Omega>0
\]
for any simple positive $(n-p,n-p)$-form $\Omega$. We choose a coordinates such that $g_{i \bar{j}}=\d_{ij}$ and $h_{i\bar{j}}=\lambda_i \d_{ij}$. Then
\[
\big(c+\theta_v^X(\chi) \big)\chi^p-p\omega \wedge \chi^{p-1}=p! \bigg(\frac{\i}{2 \pi} \bigg)^p \sum_I \bigg(c+\theta_v^X(\chi)-\sum_{i \in I} \frac{1}{\lambda_i} \bigg) \bigg( \prod_{i \in I} \lambda_i \bigg) dz^I \wedge d\bar{z}^I,
\]
where the sum is taken over all subsets $I \subset \{1,\ldots,n\}$ with $|I|=p$. The form $\Omega$ can be expressed as
\[
\Omega=\bigg(\frac{\i}{2 \pi} \bigg)^{n-p} \sum_I a_I dz^{I^c} \wedge d \bar{z}^{I^c}+\Omega',
\]
where $I^c$ denotes the complement of $I \subset \{1,\ldots,n\}$ and $\Omega'$ is a smooth form such that $\Omega' \wedge dz^I \wedge d\bar{z}^I=0$ for all $I$. Since $\Omega$ is a simple positive form, we have $a_I \geq 0$ for all $I$ and at least one $a_I$ is positive. Then a direct computation shows that
\[
\frac{\big(c+\theta_v^X(\chi) \big)\chi^p-p\omega \wedge \chi^{p-1} \wedge \Omega}{\omega^n}=\frac{p!}{n!} \sum_I a_I \bigg(c+\theta_v^X(\chi)-\sum_{i \in I} \frac{1}{\lambda_i} \bigg) \bigg( \prod_{i \in I} \lambda_i \bigg).
\]
The desired statement follows from the above formula.
\end{proof}
Following \cite[Section 5.E]{Dem12} for arbitrary $\eta=(\eta_1,\ldots,\eta_N) \in (0,\infty)^N$, we define the function $M_\eta \colon \R^N \to \R$ by
\[
M_\eta(t_1,\ldots,t_N):=\int_{\R^N} \max\{t_1+h_1,\ldots,t_N+h_N \} \prod_{j=1,\ldots,N} \theta \bigg(\frac{h_j}{\eta_j} \bigg) dh_1 \ldots dh_N,
\]
where $\theta$ is a non-negative smooth function on $\R$, supported on $[-1,1]$, such that $\int_\R \theta(h)dh=1$ and $\int_\R h \theta(h)dh=0$. The fundamental properties for the regularized maximum are summed up as follows:
\begin{lem}[\cite{Dem12}, Lemma 5.18]
The function $M_\eta$ satisfies the following:
\begin{enumerate}
\item $M_\eta(t_1,\ldots,t_N)$ is non decreasing in all variables and convex on $\R^N$.
\item $\max\{t_1,\ldots,t_N \} \leq M_\eta(t_1,\ldots,t_N) \leq \max\{t_1+\eta_1,\ldots,t_N+\eta_N \}$.
\item $M_\eta(t_1,\ldots,t_N)=M_{(\eta_1,\ldots,\check{\eta_j},\ldots,\eta_N)}(t_1,\ldots,\check{t_j},\ldots,t_N)$ if $t_j+\eta_j \leq \max_{k \neq j}\{t_k-\eta_k\}$, where $\check{f}$ means eliminating $f$.
\item $M_\eta(t_1+a,\ldots,t_N+a)=M_\eta(t_1,\ldots,t_N)+a$ for all $a \in \R$.
\end{enumerate}
\end{lem}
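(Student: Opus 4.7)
The plan is to verify the four properties more or less directly from the definition, exploiting three features of the integrand: that $\max$ is convex and coordinatewise non-decreasing, that the kernel $\prod_j \theta(h_j/\eta_j)$ is non-negative and supported in $\prod_j[-\eta_j,\eta_j]$, and that $\theta$ has zero first moment. (Implicitly the kernel is normalized so that $\int \prod_j \theta(h_j/\eta_j)\,dh = 1$; otherwise the statement of (2) and (4) forces a $1/\eta_j$ factor in the definition. I would fix the normalization at the outset and continue.)

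For (1), the function $(t_1,\ldots,t_N) \mapsto \max_j\{t_j + h_j\}$ is convex as a supremum of affine functions and non-decreasing in each $t_j$. Both properties are preserved by integration against a non-negative measure, which gives convexity and monotonicity of $M_\eta$.

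For (2), from $\max_i\{t_i+h_i\} \geq t_j+h_j$ for every fixed $j$, integrate against the normalized kernel and use $\int h_j \theta(h_j/\eta_j)\,dh_j = 0$ (a consequence of $\int h\theta(h)\,dh=0$ after a linear change of variable) to get $M_\eta \geq t_j$; taking the maximum over $j$ yields the lower bound. For the upper bound, on the support of the kernel $h_j \in [-\eta_j,\eta_j]$, so $\max_i\{t_i+h_i\} \leq \max_i\{t_i+\eta_i\}$, and integrating gives the inequality. For (4), the identity $\max_i\{t_i+a+h_i\} = a + \max_i\{t_i+h_i\}$ lets us pull $a$ out of the integral, and since the kernel integrates to one we obtain the shift relation.

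The only nontrivial point is (3), which I would handle as follows. On the support of the kernel we have $|h_k| \leq \eta_k$ for each $k$, so under the hypothesis $t_j + \eta_j \leq \max_{k \neq j}\{t_k - \eta_k\}$ we get
\[
t_j + h_j \;\leq\; t_j + \eta_j \;\leq\; \max_{k \neq j}\{t_k - \eta_k\} \;\leq\; \max_{k \neq j}\{t_k + h_k\}.
\]
Hence the $j$-th entry never achieves the maximum, and $\max_i\{t_i+h_i\}$ agrees with $\max_{k \neq j}\{t_k+h_k\}$ on the whole support. The integrand then no longer depends on $h_j$, so Fubini and the fact that $\int \theta(h_j/\eta_j)\,dh_j$ equals the normalization constant collapse the integral to $M_{(\eta_1,\ldots,\check{\eta}_j,\ldots,\eta_N)}(t_1,\ldots,\check{t}_j,\ldots,t_N)$.

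The main technical subtlety is just this bookkeeping with the support of $\theta$ in (3), together with keeping track of the normalization throughout; nothing deeper is needed, and the lemma reduces to standard properties of convolution-regularized maxima.
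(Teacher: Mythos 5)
The paper does not prove this lemma but simply cites Demailly's book, and your argument is the standard one given there: convexity and monotonicity of the integrand, vanishing first moment of $\theta$ and compact support for the two-sided bound, and the observation that on the support of the kernel the $j$-th slot cannot exceed the others under the hypothesis of (3), so it integrates out. You are also right that the paper's displayed formula for $M_\eta$ omits the normalizing factor $\prod_j \eta_j^{-1}$ (present in Demailly's definition), without which $\int \theta(h_j/\eta_j)\,dh_j = \eta_j$ and items (2) and (4) would fail; fixing this at the outset, as you do, makes the whole argument go through.
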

By the property (1) and differentiating (4) at $a=0$ we have
\begin{equation} \label{sum is one}
\sum_{j=1} \frac{\p M_\eta}{\p t_j}=1, \quad \frac{\p M_\eta}{\p t_j} \geq 0.
\end{equation}

Now let $X$ be a complex manifold, and let $\omega$ and $\hat{\chi}$ be K\"ahler forms on $X$. Let $\{B_j\}_{j \in \{1,\ldots,N\}}$ be a finite covering of $X$ and $\varphi_j$ smooth functions on $B_j$ satisfying $\varphi_j(x)<\max_{k=1,\ldots,N}\{\varphi_k(x)\}$ at every point $x \in \p B_j$ (where we extend $\varphi_j$ on $X$ so that $\varphi_j=-\infty$ outside $B_j$). In particular, we can choose a sufficiently small vector $\eta=(\eta_1,\ldots,\eta_N) \in \R^N$ so that $\varphi_j(x)+\eta_j \leq \max_{k=1,\ldots,N}\{\varphi_k(x)-\eta_k\}$ holds for all $j$ and $x \in \p B_j$. Then as a corollary of the above lemma (see also \cite[Corollary 5.19]{Dem12}) we find that the function $\varphi:=M_\eta(\varphi_1,\ldots,\varphi_N)$ is smooth on $X$. Moreover, a direct computation shows that the second derivatives in local coordinates $(z^1,\ldots,z^n)$ are given by
\begin{equation} \label{second derivative of Meta}
\frac{\p^2 \varphi}{\p z^k \p z^{\bar{\ell}}}=\sum_{a,b} \frac{\p^2 M_\eta}{\p t_a \p t_b} (\varphi_1,\ldots,\varphi_N) \cdot \frac{\p \varphi_a}{\p z^k} \cdot \frac{\p \varphi_b}{\p z^{\bar{\ell}}}+\sum_a \frac{\p M_\eta}{\p t_a} (\varphi_1,\ldots,\varphi_N) \cdot \frac{\p^2 \varphi_a}{\p z^k \p z^{\bar{\ell}}}.
\end{equation}
The following extends the argument in \cite[Section 4]{Che21}:
\begin{prop} \label{gluing of subsolutions}
In the above setting, let $f \in C^\infty(X;\R)$ and $w$ a holomorphic vector field on $X$. We further assume that $\hat{\chi}+\dd \varphi_j>0$ on each $B_j$. Then for a sufficiently small vector $\eta \in \R^N$ chosen as above, the regularized maximum $\varphi=M_\eta(\varphi_1,\ldots,\varphi_N)$ satisfies $\chi:=\hat{\chi}+\dd \varphi>0$. Moreover, if $\varphi_j$ satisfies
\[
P_\omega \bigg(\hat{\chi}+\dd \varphi_j \bigg)-\Re(w)(\varphi_j)<f
\]
on each $B_j$, then
\[
P_\omega(\chi)-\Re(w)(\varphi)<f.
\]
A similar formula holds when $P_\omega$ is replaced by $Q_\omega$ or $R_\omega$.
\end{prop}
\begin{proof}
\noindent
By the convexity of $M_\eta$, \eqref{sum is one} and \eqref{second derivative of Meta} we observe that
\[
\chi \geq \sum_j \frac{\p M_\eta}{\p t_j} \cdot \bigg( \hat{\chi}+\dd \varphi_j \bigg)>0.
\]
The term $\sum_j \frac{\p M_\eta}{\p t_j} \cdot \big(\hat{\chi}+\dd \varphi_j \big)$ is a weighted average of $\hat{\chi}+\dd \varphi_j$ with $\sum_j \frac{\p M_\eta}{\p t_j}=1$. Thus the monotonicity and convexity property of $P_\omega$ shows that
\[
P_\omega(\chi) \leq \sum_j \frac{\p M_\eta}{\p t_j} \cdot P_\omega \bigg(\hat{\chi}+\dd \varphi_j \bigg),
\]
and hence
\[
P_\omega(\chi)-\Re(w)(\varphi) \leq \sum_j \frac{\p M_\eta}{\p t_j} \cdot \bigg[ P_\omega \bigg(\hat{\chi}+\dd \varphi_j \bigg)-\Re(w)(\varphi_j) \bigg]<f.
\]
The above proof only requires the monotonicity and convexity of the operator $P_\omega$. So the same proof also works for $Q_\omega$ and $R_\omega$.
\end{proof}

In general, the K\"ahler form $\chi$ obtained in Proposition \ref{gluing of subsolutions} is not necessarily $T$-invariant. For this reason we need the averaging argument as follows:

\begin{prop} \label{averaging of potentials and forms}
Let $X$ be a complex manifold, $T \subset \Aut(X)$ a real torus, and let $\omega$ and $\hat{\chi}$ be $T$-invariant K\"ahler forms on $X$. Let $f \in C^\infty(X;\R)^T$ and $w$ a holomorphic vector field on $X$ such that $\Re(w)$ is $T$-invariant. For a $\hat{\chi}$-K\"ahler potential $\varphi$ on $X$, set $\chi:=\hat{\chi}+\dd \varphi>0$,
\[
\tilde{\varphi}:=\int_{\tau \in T} \tau^\ast \varphi dT, \quad \tilde{\chi}:=\hat{\chi}+\dd \tilde{\varphi}
\]
so that $\tilde{\chi}=\int_{\tau \in T} \tau^\ast \chi dT>0$, where $dT$ denotes the Haar measure on $T$ normalized by $\int_T dT=1$. If
\[
P_\omega(\chi)-\Re(w)(\varphi)<f,
\]
then the $T$-invariant potential $\tilde{\varphi}$ satisfies
\[
P_\omega(\tilde{\chi})-\Re(w)(\tilde{\varphi})<f.
\]
A similar formula holds when $P_\omega$ is replaced by $Q_\omega$ or $R_\omega$.
\end{prop}
\begin{proof}
Since $\omega$ is $T$-invariant, the K\"ahler form $\tau^\ast \chi$ can be identified with a positive definite Hermitian matrix by using an $\omega$-orthonormal basis at each point on $X$. So the convexity of the function $P$ implies that
\[
P_\omega(\tilde{\chi}) \leq \int_{\tau \in T} \tau^\ast P_\omega(\chi) dT.
\]
Since $\Re(w)$ and $f$ are $T$-invariant, we obtain
\[
P_\omega(\tilde{\chi})-\Re(w)(\tilde{\varphi}) \leq \int_{\tau \in T} \tau^\ast \big( P_\omega(\chi)-\Re(w)(\varphi) \big) dT<\int_{\tau \in T} \tau^\ast f dT=f.
\]
In the above proof, we only used the facts that $\omega$ is $T$-invariant and $P_\omega$ is convex. Hence the same assertion also holds for $Q_\omega$ and $R_\omega$.
\end{proof}

\begin{prop} \label{averaging of subsolutions}
Let $X$ be a compact complex manifold, $T \subset \Aut_{\rm red}(X)$ a real torus, and let $\omega$ and $\hat{\chi}$ be $T$-invariant K\"ahler forms on $X$. Let $v$ be a holomorphic vector field on $X$ satisfying $\Im(v) \in \ft$. For a $\hat{\chi}$-K\"ahler potential $\varphi$ on $X$, set $\chi:=\hat{\chi}+\dd \varphi>0$. If
\[
P_\omega(\chi)-\theta_v^X(\hat{\chi})-\Re(v)(\varphi)<c
\]
for some constant $c>0$, then the average $\tilde{\chi}:=\hat{\chi}+\dd \tilde{\varphi}$ with $\tilde{\varphi}:=\int_{\tau \in T} \tau^\ast \varphi dT$ satisfies
\[
P_\omega(\tilde{\chi})-\theta_v^X(\tilde{\chi})<c.
\]
A similar formula holds when $P_\omega$ is replaced by $Q_\omega$.
\end{prop}
\begin{proof}
This is a special case of Proposition \ref{averaging of potentials and forms} obtained by setting $w=v$ and $f:=c+\theta_v^X(\hat{\chi})$. Indeed, the $T$-action on $X$ admits a natural complexification, so that $\Re(v)$ is $T$-invariant (\cf \cite[Section 3]{Ish19}).
\end{proof}

\subsection{Functionals on the space of K\"ahler potentials} \label{functionals on the space of Kahler potentials}
Let $X$ be a compact complex manifold, $T \subset \Aut_{\rm red}(X)$ a real torus, and let $\omega \in \a$ and $\hat{\chi} \in \b$ be $T$-invariant K\"ahler forms. Define Aubin's $I$-functional \cite{Aub84} by the formula
\[
I(\phi):=\int_X \phi(\hat{\chi}^n-\chi_\phi^n).
\]
Integrating by parts one can see that
\begin{equation} \label{formula for I}
\begin{aligned}
I(\phi)&=-\int_X \phi \dd \phi \wedge (\chi_\phi^{n-1}+\chi_\phi^{n-2} \wedge \hat{\chi}+\cdots+\hat{\chi}^{n-1}) \\
&=\frac{\i}{2 \pi} \int_X  \p \phi \wedge \bp \phi \wedge (\chi_\phi^{n-1}+\chi_\phi^{n-2} \wedge \hat{\chi}+\cdots+\hat{\chi}^{n-1}) \\
& \geq \int_X |\p \phi|_{\hat{h}}^2 \hat{\chi}^n,
\end{aligned}
\end{equation}
where $\hat{h}$ denotes the Riemannian metric corresponding to $\hat{\chi}$. Let $b \in \R$, $c>0$ be constants related by \eqref{relation of b and c}. We define the modified $J$-functional by
\begin{equation} \label{Jvomegab functional}
J_{v,b}^\omega(\phi):=\int_0^1 \int_X \phi \big( F_{\omega,b}(\chi_{t \phi})-c-\theta_v^X(\chi_{t \phi}) \big) \chi_{t \phi}^n dt, \quad \phi \in \cH(X,\hat{\chi})^T.
\end{equation}
By the definition the critical points of $J_{v,b}^\omega$ are just given by solutions to the generalized equation \eqref{generalized modified J-equation}. When $b=0$, the functional $J_{v,b}^\omega$ was introduced by Li--Shi \cite{LS16}, which we will denote by $J_v^\omega$ for simplicity. Furthermore, when $v=0$, we shall abbreviate $J_0^\omega$ as $J^\omega$.

\begin{prop} \label{strict convexity of the modified J-functional}
For $b \geq 0$, the functional $J_{v,b}^\omega$ is strictly convex along any $C^{1,1}$ geodesics in $\cH(X,\hat{\chi})^T$.
\end{prop}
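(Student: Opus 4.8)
The plan is to reduce strict convexity of $J_{T,\omega,b}$ along $C^{1,1}$ geodesics to the pointwise strong convexity property recorded in Proposition \ref{strong convexity} and Proposition \ref{convexity properties when b is non-negative}. First I would recall the standard setup for the space of K\"ahler potentials: a $C^{1,1}$ geodesic $\{\phi_t\}_{t \in [0,1]}$ in $\cH^T$ is a solution of the homogeneous complex Monge--Amp\`ere equation on $X \times \{1 < |\zeta| < e\}$, so that, writing $\Phi(x,\zeta) = \phi_{\log|\zeta|}(x)$, one has $(\hat{\chi} + \dd_{X \times A} \Phi)^{n+1} = 0$ with $\hat{\chi} + \dd_{X \times A}\Phi \geq 0$. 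Along such a path $\ddot\phi_t - |\p \dot\phi_t|^2_{\chi_{\phi_t}} = 0$ almost everywhere. Differentiating the variational formula for $J_{T,\omega,b}$ twice in $t$, integrating by parts, and using the geodesic equation to cancel the term coming from $\ddot\phi_t$, I expect to arrive at an expression of the schematic form
\[
\ddt^2 J_{T,\omega,b}(\phi_t) = \int_X \Big( \text{(Hessian of } F_{\omega,b} \text{ applied to } \p\bp\dot\phi_t) + \text{(first-derivative term)} \Big) \chi_{\phi_t}^n,
\]
together with a contribution involving the Hamiltonian term $\theta_v^X(\chi_{\phi_t})$. The latter contribution should be handled using the formula $\theta_v^X(\chi_{\phi_t}) = \theta_v^X(\hat{\chi}) + v(\phi_t)$ from the first Lemma of Section \ref{Preliminaries}: its second derivative along the path produces $v(\ddot\phi_t) = v(|\p\dot\phi_t|^2_{\chi_{\phi_t}})$, which upon integration by parts against $\chi_{\phi_t}^n$ should combine with the remaining terms.

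The key step is then to recognize the integrand, at each point $x \in X$ and each $t$, as precisely the quantity appearing in Proposition \ref{strong convexity} (extended to $b \geq 0$ via Proposition \ref{convexity properties when b is non-negative}): diagonalizing $\chi_{\phi_t}$ with respect to $\omega$ so that the endomorphism $A$ is diagonal with eigenvalues $\lambda_i$, and setting $B_{ij}$ to be the components of $\p\bp\dot\phi_t$ in the diagonalizing frame, the two terms
\[
\sum_{p,q,r,s} B_{rs}\overline{B_{qp}}\, \p_{pq}\p_{rs}F_b(A) + \sum_{i,j} |B_{ij}|^2 \frac{\p_{ii} F_b(A)}{\lambda_j}
\]
are exactly what Propositions \ref{strong convexity} and \ref{convexity properties when b is non-negative} guarantee to be $\geq 0$. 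The second sum here is what the geodesic term $|\p\dot\phi_t|^2_{\chi_{\phi_t}}$ contributes after integration by parts (the factor $1/\lambda_j$ encoding the passage from $\omega$ to $\chi_{\phi_t}$ in the norm). I would carry this out by first doing the computation formally assuming $\phi_t$ is smooth, and then invoking the standard approximation argument for $C^{1,1}$ geodesics (e.g. Berndtsson's or Chen--Tian's results, as used in \cite{CS17}): convexity is preserved under the relevant weak limits, so it suffices to establish it on smooth paths, or to work with the $\e$-geodesics and let $\e \to 0$.

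The main obstacle I anticipate is the bookkeeping in the second-variation computation — specifically, making sure that the term produced by $\ddot\phi_t$ in $\ddt^2 F_{\omega,b}(\chi_{\phi_t})$, combined with the term produced by the second variation of $\theta_v^X(\chi_{\phi_t})$ and an integration by parts, reassembles exactly into the $\sum|B_{ij}|^2 \p_{ii}F_b/\lambda_j$ correction term and not something weaker. In the unmodified case ($v=0$) this is the content of the computation behind \cite[Lemma 9 and its use]{CS17}; the new point is tracking the extra Hamiltonian terms, which is where the identity $\theta_v^X(\chi_\phi) = \theta_v^X(\hat{\chi}) + v(\phi)$ and the $T$-invariance (so that $v$ pairs well with the $T$-invariant geometry along the path) are essential. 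For \emph{strict} convexity one then argues that equality in Proposition \ref{strong convexity} forces $B_{ij} = 0$ for all $i,j$, i.e. $\p\bp\dot\phi_t = 0$, which together with $\int_X \dot\phi_t \,\chi_{\phi_t}^n$ being an affine function of $t$ forces the geodesic to be trivial (constant in $t$ up to the normalization), so $J_{T,\omega,b}$ is strictly convex unless $\phi_0 = \phi_1$.
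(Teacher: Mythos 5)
Your approach is genuinely different from the paper's, and it is worth contrasting the two. You propose to recompute the second variation of $J_{T,\omega,b}$ from scratch and recognize the resulting pointwise integrand as the quadratic form in Proposition~\ref{strong convexity} (extended to $b \geq 0$ via Proposition~\ref{convexity properties when b is non-negative}). The paper instead takes a shortcut: using the relation \eqref{relation of b and c}, it writes, up to adding a multiple of the Monge--Amp\`ere energy $E(\phi)$ (which is affine along geodesics, hence irrelevant for convexity) and an overall constant,
\[
J_{T,\omega,b}=J_{T,\omega}+b\int_X \phi\,\omega^n.
\]
The strict convexity of $J_{T,\omega}$ is then a citation to \cite[Proposition~3.1]{LS16}, and the remaining term $b\int_X \phi\,\omega^n$ is handled by the one--line computation $\frac{d^2}{dt^2}\int_X\phi_t\,\omega^n=\int_X|\nabla_t\dot\phi_t|_h^2\,\omega^n\geq 0$ along smooth geodesics, with $C^{1,1}$ geodesics treated by $\e$-geodesic approximation as in \cite[Proposition~2.1]{Che00}. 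This decomposition completely sidesteps the bookkeeping you are worried about: all the Hamiltonian terms, and the interaction between the geodesic equation and the operator $F_{\omega,b}$, are already absorbed into the cited Li--Shi result, and the extra $b$-term is transparently convex because $\omega^n$ is a fixed volume form.

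On your route, two points would still need attention. First, you would effectively be re-deriving Li--Shi's Proposition~3.1 inside your computation, which is considerably more work than the paper's argument requires; if you go that way, you should make that dependence explicit rather than re-derive it silently. Second, your argument for \emph{strict} convexity is not yet complete as stated: Proposition~\ref{strong convexity} only asserts the quadratic form is $\geq 0$, so concluding $B_{ij}=0$ from equality needs a separate justification. In fact one can check by direct computation (e.g.\ at $b=0$, using the derivative formulas \eqref{derivative formulas}) that the form equals $\sum_i|B_{ii}|^2\lambda_i^{-3}+\sum_{i\neq j}|B_{ij}|^2\lambda_i^{-1}\lambda_j^{-2}$, which is indeed positive definite; but this is a strengthening of the stated proposition that you would have to supply. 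Your closing step from $\p\bp\dot\phi_t=0$ to triviality of the geodesic is fine on a compact manifold. Overall, your proposal is a viable but much heavier alternative; the paper's decomposition is the more economical argument.
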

\begin{proof}
Indeed, we can decompose the functional $J_{v,b}^\omega$ as
\[
J_{v,b}^\omega=J_v^\omega+b \int_X \phi \omega^n.
\]
The first term $J_v^\omega$ is strictly convex by \cite[Proposition 3.1]{LS16}. For $C^2$ geodesics $\frac{d^2}{d t^2} \phi_t-|\nabla_t \ddt \phi_t|_h^2=0$, we can easily see that
\[
\frac{d^2}{d t^2} \int_X \phi_t \omega^n=\int_X \big|\nabla_t \ddt \phi_t \big|_h^2 \omega^n \geq 0.
\]
Then we may approximate $C^{1,1}$ geodesics by $\e$-geodesics just as in \cite[Proposition 2.1]{Che00}.
\end{proof}

\begin{dfn}
We say that the functional $J_{v,b}^\omega$ is coercive if there exist constants $\d_c>0$, $B_c>0$ such that
\[
J_{v,b}^\omega(\phi) \geq \d_c I(\phi)-B_c
\]
for all $\phi \in \cH(X,\hat{\chi})^T$.
\end{dfn}

\begin{rk} \label{equivalent definitions of coercivity}
We define Aubin's $J$-functional \cite{Aub84} by
\[
J(\phi)=\int_0^1 \int_X \phi(\hat{\chi}^n-\chi_{t \phi}^n)dt.
\]
Then it is well-known that
\[
\frac{1}{n+1} I(\phi) \leq J(\phi) \leq \frac{n}{n+1} I(\phi).
\]
Moreover, by a straightforward computation, one can see that
\[
J^{\hat{\chi}}(\phi)=I(\phi)-J(\phi).
\]
In conclusion, we see that in the definition of coercivity, the functionals $I$, $J$, or $J^{\hat{\chi}}$ may be used interchangeably. Alternatively, one can characterize coercivity equivariantly using the $d_1$-distance on $\cH(X,\hat{\chi})^T$ (for instance, see \cite{DR17}).
\end{rk}

\section{The modified $J$-flow} \label{The modified J-flow}
Let $X$ be a compact complex manifold, $T \subset \Aut_{\rm red}(X)$ a real torus, and let $\omega \in \a$ and $\hat{\chi} \in \b$ be $T$-invariant K\"ahler forms on $X$. We define the modified $J$-flow as the gradient flow of \eqref{Jvomegab functional}, \ie
\begin{equation} \label{b-modified J-flow}
\ddt \phi_t=-F_b(A_t)+c+\theta_v^X(\chi_t),
\end{equation}
where we set $A_t:=g^{i \bar{k}} h_{j \bar{k}}$ and $\chi_t:=\hat{\chi}+\dd \phi_t$ for simplify the notations. Note that $\phi_t \in \cH(X,\hat{\chi})^T$ as long as it exists. When $b \geq 0$ we have already seen that $J_{v,b}^\omega$ is strictly convex along any $C^{1,1}$ geodesics in $\cH(X,\hat{\chi})^T$ (\cf Lemma \ref{strict convexity of the modified J-functional}). On the other hand it is not the case with $b<0$ so that we should take $\e$ sufficiently small according to the initial data in order to obtain the regularity of the flow. In what follows we study the long-time behavior of \eqref{b-modified J-flow} in two cases $b \geq 0$ and $b=-\e<0$ separately.

\subsection{Long-time existence for $b=-\e<0$} \label{long-time existence for b negative}
Assume $b=-\e<0$. Then by \eqref{constant cX} and \eqref{relation of b and c} the constant $c=c_\e$ is given by
\begin{equation} \label{constant ce}
c_\e=\frac{n \a \cdot \b^{n-1}-\e \a^n}{\b^n}=c_X-\e \frac{\a^n}{\b^n}.
\end{equation}
Fix $\phi_0 \in \cH(X,\hat{\chi})^T$ and take a constant $K_0>0$ so that $Q_\omega(\chi_0)=Q(A_0)<K_0$. We set $\e_5:=\min\{\e_1(n,K_0+3C_\theta), \e_2(n,K_0+3C_\theta), \e_3(n,K_0,C_\theta) \}$ and assume that $\e \in (0,\e_5)$, where the constants $\e_1$, $\e_2$, $\e_3$, $C_\theta$ are given in Proposition \ref{properties for non-positive case}, Proposition \ref{strong convexity}, Lemma \ref{estimate along continuous path} and \eqref{Ctheta} respectively. Then the flow
\begin{equation} \label{modified J-flow when b is non-positive}
\ddt \phi_t=-F_{-\e}(A_t)+c_\e+\theta_v^X(\chi_t)
\end{equation}
is parabolic at $t=0$ from the choice of $\e$, so the short-time existence follows from standard theory. Let $T_{\max}$ be the maximal existence time of $\phi_t \in \cH(X,\hat{\chi})^T$. We introduce the operator $\square_{-\e,t}$ by
\[
\square_{-\e,t}:=\ddt+\p_{ij} F_{-\e}(A_t) g^{i \bar{k}} \nabla_j \nabla_{\bar{k}}-\Re(v),
\]
where $\nabla$ denotes the Chern connection with respect to $\omega$.

\begin{lem} \label{ddphi and phi}
Along the flow $\phi_t$ for $t \in [0,T_{\max})$ we have the following:
\begin{enumerate}
\item $|\ddt \phi_t|<C$.
\item $|\phi_t| \leq C(t+1)$.
\item $Q(A_t)<K_0+3C_\theta$.
\end{enumerate}
\end{lem}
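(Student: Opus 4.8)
The plan is to run a maximum principle argument for each of the three estimates in turn, using the operator $\square_{L,-\e}$ as the parabolic linearization of the flow \eqref{modified J-flow when b is non-positive}, in the spirit of \cite[Lemmas 14--16]{CS17}. First I would differentiate \eqref{modified J-flow when b is non-positive} in $t$ to get the evolution equation for $\dot{\phi}_t$. Since $\ddt A_t = -g^{i\bar k}(\dd \dot\phi_t)_{j\bar k}$ and $\ddt \theta_v^X(\chi_t) = \Re(v)(\dot\phi_t)$ (from the Hamiltonian formula $\theta_v^X(\chi_\phi) = \theta_v^X(\hat\chi) + v(\phi)$ and $T$-invariance of $\dot\phi_t$, so only the real part survives), one finds $\square_{L,-\e} \dot\phi_t = 0$. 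Hence $\max_X \dot\phi_t$ is non-increasing and $\min_X \dot\phi_t$ is non-decreasing in $t$ (the zeroth-order term $-\Re(v)$ does not affect the maximum principle since at an interior spatial extremum the $\Re(v)$-derivative vanishes), which gives (1) with $C := \max_X |\dot\phi_0|$. For (2), integrating (1) in $t$ from the initial time gives $|\phi_t - \phi_0| \le Ct$, hence $|\phi_t| \le |\phi_0| + Ct \le C(t+1)$ after enlarging $C$ to absorb $\max_X|\phi_0|$.

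For (3), which is the crux, I would argue by a continuity/barrier argument rather than a direct maximum principle, because $F_0$ is not the quantity being evolved. Fix $t \in [0, T_{\max})$ and consider the path $s \mapsto A_s$, $s \in [0,t]$, in $\cM$. From (1) we have $|F_{-\e}(A_s) - c_\e - \theta_v^X(\chi_s)| = |\dot\phi_s| < C$, and since $|\theta_v^X| \le C_\theta$ and $|c_\e - c_X| = \e \a^n/\b^n$ is small, this bounds $F_{-\e}(A_s) \le c_X + C_\theta + C + 1 \le K_0 + 2C_\theta$ (after fixing the constant $C$ appearing in the statement to be large enough relative to $c_X$, $C_\theta$; one absorbs $c_X$, $\e\a^n/\b^n$, and the initial $\dot\phi_0$-bound into a single $C$, consistently with (1)). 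We also have $F_0(A_0) < K_0$ by the choice of $K_0$. Since $\e < \e_5 \le \e_3(K_0)$, Lemma \ref{estimate along continuous path} applies to the path $A_s$ and yields $F_0(A_s) < K_0 + 3C_\theta$ for all $s \in [0,t]$, in particular at $s = t$. As $t$ was arbitrary, (3) follows on all of $[0, T_{\max})$.

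The main obstacle is bookkeeping the constant $C$ so that it is simultaneously the constant in (1), (2) and the $C_\theta$-dependent threshold feeding into Lemma \ref{estimate along continuous path}: one must first establish (1) with a definite constant, then observe $F_{-\e}(A_s) = \dot\phi_s + c_\e + \theta_v^X(\chi_s)$ is bounded by $K_0 + 2C_\theta$ using that $K_0$ was chosen to dominate $c_X \approx F_0(A_0)$-type quantities plus the $|\dot\phi_0|$-bound and $C_\theta$ — i.e. one should set $C$ in the statement after having chosen $K_0$, enlarging $K_0$ if necessary so that $c_\e + C_\theta + \max_X|\dot\phi_0| \le K_0 + 2C_\theta$. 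The other minor point to check carefully is that the zeroth-order term $-\Re(v)$ in $\square_{L,-\e}$ genuinely causes no trouble in the maximum principle for (1): at a point and time where $\dot\phi_t$ achieves its spatial maximum, $\nabla \dot\phi_t = 0$ so $\Re(v)(\dot\phi_t) = 0$ there, and the parabolic maximum principle applies verbatim.
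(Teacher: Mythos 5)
Your proposal is correct and follows essentially the same route as the paper: differentiate \eqref{modified J-flow when b is non-positive} to get $\square_{L,-\e}\ddt\phi_t=0$, apply the maximum principle (your remark about the $-\Re(v)$ term vanishing at a spatial extremum is exactly the right justification, which the paper leaves implicit), integrate in $t$ for (2), and reduce (3) to Lemma~\ref{estimate along continuous path}. One small streamlining worth noting for (3): rather than the two-sided bound $|\ddt\phi_s|<C$, the paper uses only the one-sided inequality $\ddt\phi_t\geq\inf_X\ddt\phi_0$, which after rearranging reads
\[
F_{-\e}(A_t)\leq\theta_v^X(\chi_t)+\sup_X\big(F_{-\e}(A_0)-\theta_v^X(\chi_0)\big)<C_\theta+K_0+C_\theta=K_0+2C_\theta,
\]
where the strict inequality comes from $F_{-\e}(A_0)\leq F_0(A_0)<K_0$; this hits the threshold $K_0+2C_\theta$ exactly (the $c_\e$ cancels) and avoids the hedging about enlarging $K_0$, which in your version is needed because $\max_X|\ddt\phi_0|+c_\e$ can be as large as $K_0+C_\theta$ with possible equality.
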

\begin{proof}
Differentiating the equation \eqref{modified J-flow when b is non-positive} we have
\[
\square_{-\e,t} \ddt \phi_t=0
\]
for all $t \in [0,T_{\max})$. So the maximum principle shows that
\[
\inf_X \ddt \phi_0 \leq \ddt \phi_t \leq \sup_X \ddt \phi_0
\]
for all $t \in [0,T_{\max})$ which shows (1), and (2) by integrating in $t$. Moreover, the lower bound of $\ddt \phi_t$ implies that
\[
F_{-\e}(A_t) \leq \theta_v^X(\chi_t)+\sup_X\big(F_{-\e}(A_0)-\theta_v^X(\chi_0) \big)<K_0+2C_\theta.
\]
Then it follows from Lemma \ref{estimate along continuous path} that $Q(A_t)<K_0+3C_\theta$ for all $t \in [0,T_{\max})$, and hence we have (3).
\end{proof}

\begin{lem} \label{Laplacian estimate when b is negative}
The modified $J$-flow admits a solution $\phi_t$ for all $t \in [0,\infty)$.
\end{lem}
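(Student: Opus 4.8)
The plan is to show that for every $T<T_{\max}$ one has uniform a priori $C^\infty$ bounds for $\phi_t$ on $X\times[0,T]$; by the standard continuation argument for parabolic flows this forces $T_{\max}=\infty$. By Lemma \ref{ddphi and phi} we already have $|\ddt\phi_t|<C$, hence $|\phi_t|\leq C(T+1)$ on $[0,T]$, together with $F_0(A_t)<K_0+3C_\theta$, which means that the eigenvalues $\lambda_i$ of $\chi_t$ with respect to $\o$ are bounded below by a uniform positive constant. In particular $A_t$ stays in $\cM_{K_0+3C_\theta}\subset\cM_{K_0+10C_\theta}$ for all $t$, and since $\e<\e_5$ all the properties of Proposition \ref{properties for non-positive case} and the strong convexity inequality of Proposition \ref{strong convexity} are available on this range. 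The single missing ingredient for uniform parabolicity on $[0,T]$ is an \emph{upper} bound on $\Tr_\o\chi_t$, i.e.\ a Laplacian (second-order) estimate.

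First I would prove that bound by applying the maximum principle to $u:=\log\Tr_\o\chi_t-A\phi_t$ for a large constant $A$. Using the standard Chern--Lu/Aubin--Yau computation of $\square_{L,-\e}\log\Tr_\o\chi_t$, the dangerous third-order terms are absorbed by the strong convexity inequality (Proposition \ref{strong convexity}), and the lower bound on the $\lambda_i$ dominates all curvature contributions of the fixed data $(\o,\hat{\chi})$ by a constant multiple of $\cF:=-\sum_i\p_i f_{-\e}(\lambda_t)>0$ (positivity by Proposition \ref{properties for non-positive case}(2)). The term $\square_{L,-\e}(-A\phi_t)$ contributes, after using $\p_i f_{-\e}<0$ and the two-sided comparability of $\hat{\chi}$ and $\o$, a quantity bounded above by $-cA\cF+C$; choosing $A$ large therefore yields $\square_{L,-\e}u\leq C$, so the maximum principle (the value at $t=0$ being controlled by smoothness of $\phi_0$) gives $u\leq C$ on $X\times[0,T]$ and hence $\Tr_\o\chi_t\leq Ce^{A\phi_t}\leq C(T)$. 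The first-order term $-\Re(v)$ in $\square_{L,-\e}$ and the uniformly bounded Hamiltonian $\theta_v^X(\chi_t)$ are lower-order with smooth bounded coefficients and do not affect the argument; this is just the classical second-order estimate for the $J$-flow (cf.\ \cite{CS17,Che21}) carried over to the modified setting.

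Once $\Tr_\o\chi_t$ is bounded above while the $\lambda_i$ are bounded below, the flow \eqref{modified J-flow when b is non-positive} is uniformly parabolic on $X\times[0,T]$ (in particular $\phi_t$ stays in $\cH^T$), and since $f_{-\e}$ is convex on $\cM_{K_0+10C_\theta}$ (Proposition \ref{properties for non-positive case}(4)) the right-hand side $-F_{-\e}$ is concave in the complex Hessian of $\phi$. The parabolic Evans--Krylov theorem then produces a uniform $C^{2,\g}$ estimate, and parabolic Schauder bootstrapping upgrades this to uniform $C^\infty$ bounds on $X\times[\d,T]$ for each $\d>0$, with bounds near $t=0$ coming from $\phi_0\in\cH^T$. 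Hence $\phi_t$ and all of its derivatives stay bounded as $t\uparrow T$, so the flow continues past $T$; since $T<T_{\max}$ was arbitrary we conclude $T_{\max}=\infty$.

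The hard part is the Laplacian estimate above: the new term $-\e\,\o^n/\chi_t^n$ inside $F_{-\e}$ together with the vector-field and Hamiltonian terms must be controlled in the maximum principle, and it is exactly here that the smallness $\e<\e_5$ is used --- it is what guarantees $\p_i f_{-\e}<0$, the convexity of $f_{-\e}$, and the strong convexity inequality on the range $\cM_{K_0+10C_\theta}$ swept out by the flow. Everything after that is routine parabolic regularity theory.
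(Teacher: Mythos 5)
Your overall architecture matches the paper's: reduce long-time existence to an a priori $C^2$ bound on finite time intervals, prove that bound via the parabolic maximum principle applied to $\log\Tr_\omega\chi_t$, and then upgrade to $C^\infty$ by Evans--Krylov and Schauder. That is exactly the paper's proof. The difference is the choice of test function, and here I want to flag a misconception in your reasoning even though the conclusion still comes out right.

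You use $u=\log\Tr_\omega\chi_t-A\phi_t$ and claim that $\square_{L,-\e}(-A\phi_t)\leq -cA\cF+C$ with $\cF:=-\sum_i\p_i f_{-\e}(\lambda)>0$, and that "choosing $A$ large" therefore yields $\square_{L,-\e}u\leq C$. The algebra for $-cA\cF+C$ is fine, but the mechanism is illusory: in this lemma there is \emph{no subsolution hypothesis}, so the quantity $\cF$ has no uniform positive lower bound — if all $\lambda_i\to\infty$, $\cF\to 0$ — and the term $-cA\cF$ cannot absorb anything. (You are importing the $\log\Tr_\omega\chi_t-N\phi$ device from the subsolution-based Laplacian estimate of Lemma \ref{Laplacian estimate}, where it genuinely closes via the Schur--Horn comparison; that machinery is simply not available here.) What actually makes your estimate close is something much weaker and already present without the $-A\phi_t$ term: since the $\lambda_i$ are bounded below along the flow (Lemma \ref{ddphi and phi}(3)) and $\cF$ is bounded \emph{above} by $n/c_0^2$, the quantity $\square_{L,-\e}\phi_t$ is bounded in absolute value, so adding $-A\phi_t$ just shifts the constant in $\square_{L,-\e}u\leq C$. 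Hence the maximum principle does not give a time-independent bound $u\leq C$ as you first write; it gives $u\leq \max_X u(\cdot,0)+C''t$, i.e.\ a $T$-dependent bound — which you implicitly acknowledge at the end with $C(T)$, and which is all that is needed for long-time existence. The paper sidesteps all of this by taking $G:=\log\Tr_\omega\chi_t-2C_4t$ directly (a pure time-drift, no $\phi_t$), producing $\Tr_\omega\chi_t\leq e^{2C_4t+C_5}$; this is both cleaner and more transparent about the $T$-dependence. In short: your proof is correct and essentially follows the paper, but the $-A\phi_t$ term is dead weight, the "$\cF$ absorbs the constants" heuristic fails in the absence of a subsolution, and the bound you obtain is necessarily time-dependent — matching the paper's $e^{2C_4t+C_5}$, not the time-independent bound the presentation of your max-principle step suggests.
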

\begin{proof}
It follows from Lemma \ref{ddphi and phi} that the function $F_{-\e}$ is convex as long as the flow exists. Therefore, by the Evans--Krylov estimate \cite{Kry76,Kry82}, the Schauder estimate, and the standard bootstrapping argument, the $C^\infty$-estimate for $\phi_t$ reduces to a bound on $|\dd \phi_t|$. We take normal coordinates of $\omega$ at a point so that $\chi_t$ is diagonal with eigenvalues $\lambda_1,\ldots,\lambda_n$. By using \eqref{derivative formulas} we compute
\[
\begin{aligned}
\nabla_i \nabla_{\bar{i}} F_{-\e}(A_t)&=\nabla_{\bar{i}} \big( \p_{pq} F_{-\e}(A_t) \cdot \nabla_i A_q^p \big) \\
&=\p_{pq} F_{-\e}(A_t) \cdot \nabla_{\bar{i}} \nabla_i A_q^p+\p_{pq} \p_{rs} F_{-\e}(A_t) \cdot \nabla_{\bar{i}} A_s^r \cdot \nabla_i A_q^p \\
&=\p_{pq} F_{-\e}(A_t) \cdot \nabla_{\bar{i}} \nabla_i h_{q \bar{p}}+\p_{pq} \p_{rs} F_{-\e}(A_t) \cdot \nabla_{\bar{i}} h_{s \bar{r}} \cdot \nabla_i h_{q \bar{p}} \\
&=\p_{pp} F_{-\e}(A_t) \cdot (\nabla_p \nabla_{\bar{p}} h_{i \bar{i}}+\Rm \ast h)+\p_{pq} \p_{rs} F_{-\e}(A_t) \cdot \nabla_{\bar{i}} h_{s \bar{r}} \cdot \nabla_i h_{q \bar{p}},
\end{aligned}
\]
where $\Rm$ denotes the Riemannian curvature of $\omega$. By the definition of Hamiltonians $\theta_v^X(\chi_t)$ we have
\[
\begin{aligned}
\nabla_i \nabla_{\bar{i}} \theta_v^X(\chi_t)&=\nabla_i (v^k h_{k \bar{i}}) \\
&=\nabla_i v^k \cdot h_{k \bar{i}}+v^k \nabla_i h_{k \bar{i}} \\
&=\nabla_i v^k \cdot h_{k \bar{i}}+v^k \nabla_i \hat{h}_{k \bar{i}}+v^k \nabla_i \phi_{k \bar{i}} \\
&=\nabla_i v^k \cdot h_{k \bar{i}}+v^k \nabla_i \hat{h}_{k \bar{i}}+v^k \nabla_k \phi_{i \bar{i}} \\
&\leq C_1 |h|+v(\Delta_g \phi)+C_1,
\end{aligned}
\]
where $\hat{h}$ denotes the Riemannian metric corresponding to $\hat{\chi}$, and we used $h_{k\bar{i}}=\hat{h}_{k \bar{i}}+\phi_{k \bar{i}}$. By \eqref{modified J-flow when b is non-positive} we obtain
\[
\begin{aligned}
&\ddt \log \Tr_\omega \chi_t \\
&=\frac{1}{\Tr_\omega \chi_t} \nabla_i \nabla_{\bar{i}} \big( -F_{-\e}(A_t)+\theta_v^X(\chi_t) \big) \\
&=\frac{1}{\Tr_\omega \chi_t} \big( -\p_{pp} F_{-\e}(A_t) \cdot \nabla_p \nabla_{\bar{p}} h_{i \bar{i}}-\p_{pp} F_{-\e}(A_t) \cdot \Rm \ast h-\p_{pq} \p_{rs} F_{-\e}(A_t) \cdot \nabla_{\bar{i}} h_{s \bar{r}} \cdot \nabla_i h_{q \bar{p}} \\
&+\nabla_i \nabla_{\bar{i}} \theta_v^X(\chi_t) \big) \\
&\leq \frac{1}{\Tr_\omega \chi_t} \big( -\p_{pp} F_{-\e}(A_t) \cdot \nabla_p \nabla_{\bar{p}} h_{i \bar{i}}+|\p_{pp} F_{-\e}(A_t)| |\Rm| |h|-\p_{pq} \p_{rs} F_{-\e}(A_t) \cdot \nabla_{\bar{i}} h_{s \bar{r}} \cdot \nabla_i h_{q \bar{p}} \\
&+C_1 |h|+v(\Delta_g \phi)+C_1 \big).
\end{aligned}
\]
Also a direct computation shows that
\[
\nabla_p \nabla_{\bar{p}} \log \Tr_\omega \chi_t=\frac{\nabla_p \nabla_{\bar{p}} \Tr_\omega \chi_t}{\Tr_\omega \chi_t}-\frac{|\nabla_p \Tr_\omega \chi_t|^2}{(\Tr_\omega \chi_t)^2}=\frac{\nabla_p \nabla_{\bar{p}} h_{i\bar{i}}}{\Tr_\omega \chi_t}-\frac{|\nabla_i h_{p \bar{i}}|^2}{(\Tr_\omega \chi_t)^2},
\]
\[
v(\log \Tr_\omega \chi_t)=\frac{v(\Tr_\omega \chi_t)}{\Tr_\omega \chi_t}
=\frac{1}{\Tr_\omega \chi_t} \big( v(\Tr_\omega \hat{\chi})+v(\Delta_g \phi) \big) \geq \frac{1}{\Tr_\omega \chi_t} \big( v(\Delta_g \phi)-C_2 \big).
\]
By using the strong convexity (Proposition \ref{strong convexity}) we observe that
\[
\begin{aligned}
&\p_{pq} \p_{rs} F_{-\e}(A_t) \cdot \nabla_{\bar{i}} h_{s \bar{r}} \cdot \nabla_i h_{q \bar{p}}+\p_{pp} F_{-\e}(A_t) \frac{|\nabla_i h_{p \bar{i}}|^2}{\Tr_\omega \chi_t} \\
&\geq \p_{pq} \p_{rs} F_{-\e}(A_t) \cdot \nabla_{\bar{i}} h_{s \bar{r}} \cdot \nabla_i h_{q \bar{p}}+\p_{pp} F_{-\e}(A_t) \frac{|\nabla_i h_{p \bar{j}}|^2}{\lambda_j} \\
&\geq 0.
\end{aligned}
\]
Combining all things together we have
\[
\square_{-\e,t} \log \Tr_\omega \chi_t \leq \frac{1}{\Tr_\omega \chi_t} \big( |\p_{pp} F_{-\e}(A_t)| |\Rm| |h|+C_1 |h|+C_3 \big),
\]
where $\Tr_\omega \chi_t=\sum_i \lambda_i$, $|h|=(\sum_i \lambda_i^2)^{1/2}$ and
\[
0<-\p_{pp} F_{-\e}(A_t)=-\p_p f_{-\e}(\lambda)=\frac{1}{\lambda_p^2}-\frac{\e}{\lambda_pS_n(\lambda)}.
\]
Since $\lambda_i$ is uniformly bounded from below this shows that the right hand side is bounded from above by some uniform constant $C_4>0$. Therefore if we set $G:=\log \Tr_\omega \chi_t-2C_4t$ then for any $T \in (0,T_{\max})$ and maximum point $(x_0,t_0)$ of $G$ on $X \times [0,T]$ with $t_0>0$ we must have
\[
0 \leq \square_{-\e,t} G \leq -C_4,
\]
but this is impossible. It follows that
\[
\Tr_\omega \chi_t \leq e^{2C_4t+C_5}
\]
on $[0,T_{\max})$ which gives the uniform upper bound for $\lambda_i$.
\end{proof}

\subsection{Convergence for $b \geq 0$}
In this subsection we will give a proof of Theorem \ref{existence of solutions to the generalized equation} by showing that the modified $J$-flow
\begin{equation} \label{modified J-flow when b is non-negative}
\ddt \phi_t=-F_b(A_t)+c+\theta_v^X(\chi_t)
\end{equation}
converges smoothly to the solution \eqref{generalized modified J-equation}, under the assumption that there exists a subsolution $\hat{\chi} \in \b$ satisfying
\begin{equation} \label{subsolution as a reference Kahler form}
P_\omega(\hat{\chi})<c+\theta_v^X(\hat{\chi}),
\end{equation}
which is chosen as a reference K\"ahler form. Define the operator
\[
\square_{b,t}:=\ddt+\p_{ij} F_b(A_t)g^{i \bar{k}} \nabla_j \nabla_{\bar{k}}-\Re(v),
\]
and let $T_{\max}$ be the maximal existence time of the flow.

\begin{lem}
There exists a uniform constant $C>0$ such that
\[
\bigg| \ddt \phi_t \bigg|+Q(A_t)<C.
\]
In particular, the eigenvalues of $A_t$ have a uniform positive lower bound.
\end{lem}
\begin{proof}
The proof is similar to that of Lemma \ref{ddphi and phi}.
\end{proof}

\begin{lem} \label{Laplacian estimate}
There exists a uniform constant $N>0$, $C>0$ such that
\begin{equation} \label{trace is bounded by osc}
\Tr_\omega \chi_t \leq C e^{N(\phi_t-\inf_{M \times [0,T]} \phi_t)}
\end{equation}
for all $t \in [0,T]$ and $T \in [0,T_{\max})$.
\end{lem}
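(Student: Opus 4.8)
The approach is the classical parabolic second–order (Laplacian) estimate for $J$–type flows in the presence of a subsolution, with the Hamiltonian terms handled via $T$–invariance. The plan is to work with the quantity
\[
u:=\log\Tr_\omega\chi_t-N\phi_t\qquad\text{on }X\times[0,T],
\]
where $N>0$ is a large constant (depending only on the fixed data) to be chosen at the end, and to estimate $\square_{L,b}u$ at a point $(x_0,t_0)$ where $u$ attains its maximum over $X\times[0,T]$. If $t_0=0$ the bound is immediate from the fixed $C^2$–bound on $\phi_0$, so I would assume $t_0>0$; and if $\Tr_\omega\chi_{t_0}(x_0)$ were already bounded by a uniform constant there would be nothing to prove, so I would assume it is large. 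In $\omega$–normal coordinates at $x_0$ diagonalising $\chi_{t_0}$ with eigenvalues $\lambda_1\geq\cdots\geq\lambda_n$, at $(x_0,t_0)$ one has $\ddt u\geq0$, $du=0$ (hence $\Re(v)u=0$), and the complex Hessian of $u$ is negative semidefinite; since $\p_i f_b(\lambda)<0$ by Proposition \ref{convexity properties when b is non-negative}, contracting that Hessian with $\p_{ij}F_b(A_t)g^{i\bar k}$ gives a nonnegative quantity, and therefore $\square_{L,b}u\geq0$ at $(x_0,t_0)$.

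First I would bound $\square_{L,b}\log\Tr_\omega\chi_t$ above by a uniform constant $C_0$. This is exactly the computation already carried out in the proof of Lemma \ref{Laplacian estimate when b is negative}, verbatim with $-\e$ replaced by $b$ and with Proposition \ref{convexity properties when b is non-negative} used in place of Proposition \ref{strong convexity} to discard the third–order terms; the surviving terms are bounded because the eigenvalues of $A_t$ have a uniform positive lower bound $\delta_0>0$ (so $|\p_{pp}F_b(A_t)|\leq C$), the curvature of $\omega$ is bounded, and the terms involving $\theta_v^X(\chi_t)$ combine with the transport term $-\Re(v)\log\Tr_\omega\chi_t$ to something bounded, as in that proof.

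Next I would compute $\square_{L,b}\phi_t$ at $(x_0,t_0)$. Using the flow equation \eqref{modified J-flow when b is non-negative}, the identity $\theta_v^X(\chi_t)=\theta_v^X(\hat\chi)+v(\phi_t)$ (recalled in Section \ref{Preliminaries}), the $T$–invariance of $\phi_t$ together with $\Im(v)\in\ft$ (so that $v(\phi_t)=\Re(v)(\phi_t)$ and the two $\Re(v)$–terms cancel), and $h_{j\bar k}=\hat h_{j\bar k}+\p_j\p_{\bar k}\phi_t$, one finds
\[
\square_{L,b}\phi_t=c'-2f_b(\lambda)-\frac{(n-1)b}{\lambda_1\cdots\lambda_n}+\sum_{i=1}^n\bigl(-\p_i f_b(\lambda)\bigr)\,\hat h_{i\bar i},\qquad c':=c+\theta_v^X(\hat\chi).
\]
Writing $-\p_i f_b(\lambda)=\lambda_i^{-2}+b(\lambda_i\lambda_1\cdots\lambda_n)^{-1}$ and rearranging, the right-hand side is $\geq c'+\sum_i\lambda_i^{-1}(\hat h_{i\bar i}\lambda_i^{-1}-2)-(n+1)b/(\lambda_1\cdots\lambda_n)$. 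Let $k_*$ be the index of the largest eigenvalue; for $i=k_*$ the summand is $\geq-2/\lambda_{k_*}$, while for $i\neq k_*$ one has $\lambda_i^{-1}(\hat h_{i\bar i}\lambda_i^{-1}-2)\geq-1/\hat h_{i\bar i}$. The subsolution hypothesis $P_\omega(\hat\chi)<c+\theta_v^X(\hat\chi)$ gives, by compactness of $X$, a uniform $\e_0>0$ with $P_\omega(\hat\chi)\leq c'-2\e_0$ pointwise; by the Courant--Fischer--Weyl min--max principle (\cite[Section 3]{Che21}) one has $P_\omega(\hat\chi)=\max_W\Tr_{\hat\chi|_W}(\omega|_W)$ over complex hyperplanes $W$, and applying this to $W=\{z^{k_*}=0\}$ together with the elementary inequality $(B^{-1})_{ii}\geq B_{ii}^{-1}$ for positive definite Hermitian $B$ yields $\sum_{i\neq k_*}1/\hat h_{i\bar i}\leq\Tr_{\hat\chi|_W}(\omega|_W)\leq c'-2\e_0$. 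Since $\lambda_{k_*}\geq\Tr_\omega\chi_{t_0}(x_0)/n$ and $\lambda_1\cdots\lambda_n\geq\lambda_{k_*}\delta_0^{n-1}$, once $\Tr_\omega\chi_{t_0}(x_0)$ is large enough these combine to give $\square_{L,b}\phi_t\geq2\e_0-2/\lambda_{k_*}-(n+1)b/(\lambda_{k_*}\delta_0^{n-1})\geq\e_0$ at $(x_0,t_0)$.

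Putting the two steps together, at $(x_0,t_0)$ we obtain $0\leq\square_{L,b}u=\square_{L,b}\log\Tr_\omega\chi_t-N\,\square_{L,b}\phi_t\leq C_0-N\e_0$, a contradiction as soon as $N>C_0/\e_0$. Hence, with $N$ fixed this way, the maximum of $u$ over $X\times[0,T]$ is attained either at $t_0=0$ or at a point where $\Tr_\omega\chi_t$ is bounded by a uniform constant, so in either case $u\leq\log C-N\inf_{X\times[0,T]}\phi_t$ everywhere, which is exactly \eqref{trace is bounded by osc}. I expect the third step to be the main obstacle: squeezing a genuinely positive lower bound for $\square_{L,b}\phi_t$ out of the subsolution forces one to pass from the eigenvalues of $\hat\chi$ to its diagonal entries in the frame adapted to $\chi_t$ --- reconciled via the min--max characterisation of $P_\omega$ and $(B^{-1})_{ii}\geq B_{ii}^{-1}$ --- and to check that the extra $b$–terms become negligible once some eigenvalue of $\chi_t$ is large; the bookkeeping that makes the Hamiltonian contributions cancel (via $T$–invariance and $\Im(v)\in\ft$) is an additional point specific to the modified equation.
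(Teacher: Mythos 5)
Your proof is correct, and it shares the same maximum-principle framework as the paper — apply $\square_{L,b}$ to $\log\Tr_\omega\chi_t-N\phi_t$, bound $\square_{L,b}\log\Tr_\omega\chi_t\leq C_0$ as in the $b\leq 0$ case, compute $\square_{L,b}\phi_t$ using the $T$-invariance cancellation $v(\phi_t)=\Re(v)(\phi_t)$, and drive the largest eigenvalue into a contradiction with the subsolution hypothesis at the maximum point. The genuinely different step is how you close the estimate. The paper passes to the asymptotic operator $\tilde f_b(\lambda_1,\ldots,\lambda_{n-1})=\lim_{\lambda_n\to\infty}f_b$ in the Collins--Sz\'ekelyhidi fashion, uses the tangent-line inequality for the convex function $\tilde f_b$, and then compares the diagonal entries of $\hat\chi$ to its eigenvalues via the Schur--Horn theorem. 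You instead expand $-\p_i f_b(\lambda)=\lambda_i^{-2}+b(\lambda_i\lambda_1\cdots\lambda_n)^{-1}$, complete a square in each index to get the pointwise bound $\lambda_i^{-1}(\hat h_{i\bar i}\lambda_i^{-1}-2)\geq-1/\hat h_{i\bar i}$, and then control $\sum_{i\neq k_*}1/\hat h_{i\bar i}$ by $P_\omega(\hat\chi)$ using the elementary inequality $(B^{-1})_{ii}\geq 1/B_{ii}$ together with Cauchy interlacing for the hyperplane restriction. Both routes are sound. Yours is more self-contained and avoids Schur--Horn entirely; the paper's is closer to the general machinery of \cite{CS17} and adapts more uniformly to other concave symmetric functions. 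One small bookkeeping remark: you need the sharper restriction inequality $\Tr_{\hat\chi|_W}(\omega|_W)\leq P_\omega(\hat\chi)$, which is what interlacing gives; the paper's Proposition~\ref{property for restriction} only states the weaker $Q_{Y,\omega}\leq Q_{X,\omega}$, so it is worth flagging that you are using the $P$-version (which follows directly from interlacing, as in \cite[Section~3]{Che21}) rather than that proposition as stated.
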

\begin{proof}
From the assumption we can choose a constant $\epsilon>0$ sufficiently small so that
\[
P_\omega(\hat{\chi})<c+\theta_v^X(\hat{\chi})-2 \epsilon.
\]
We choose normal coordinates for $\omega$ so that $\chi_t$ is diagonal with entries $\lambda_1 \leq \ldots \leq \lambda_n$. The metric $\hat{\chi}$ may not be diagonal, but we denote its eigenvalues by $\mu_1,\ldots,\mu_n$. As for the first part, the calculation in Lemma \ref{Laplacian estimate when b is negative} can be applied without modification (indeed, one can easily check that the constants $C_1,\ldots,C_4$ in the proof of Lemma \ref{Laplacian estimate when b is negative} can all be chosen independently of $t$). Consequently, we obtain
\[
\square_{b,t} \log \Tr_\omega \chi_t \leq C_1.
\]
The following discussion is restricted to the case $b \geq 0$, and our goal is to obtain a $t$-independent upper bound for the eigenvalues $\lambda_i$. We first compute
\[
\square_{b,t} \phi_t=-F_b(A_t)+c+\theta_v^X(\hat{\chi})+\p_{ii} F_b(A_t) (h_{ii}-\hat{h}_{i \bar{i}}).
\]
Hence if we set $G:=\log \Tr_\omega \chi_t-N \phi_t$ then
\[
\square_{b,t} G \leq N \epsilon+N F_b(A_t)-Nc-N \theta_v^X(\hat{\chi})+N \p_{ii} F_b(A_t)(\hat{h}_{i \bar{i}}-h_{i \bar{i}}),
\]
where the large constant $N>0$ is taken so that $C_1 \leq N \epsilon$. For any fixed $T \in (0,T_{\max})$ the maximum principle shows that
\[
-\epsilon \leq F_b(A_t)-c-\theta_v^X(\hat{\chi})+\p_{ii} F_b(A_t)(\hat{h}_{i \bar{i}}-h_{i \bar{i}})
\]
at the maximum point $(x_0,t_0)$ of $G$ on $X \times [0,T]$ if $t_0>0$. In particular,
\[
P_\omega(\hat{\chi})+\epsilon \leq F_b(A_t)+\p_{ii} F_b(A_t)(\hat{h}_{i \bar{i}}-h_{i \bar{i}})
\]
from our choice of $\epsilon$, which can be written as
\[
\max_k \sum_{i \neq k} \frac{1}{\mu_i}+\epsilon \leq f_b(\lambda)+\p_i f_b(\lambda) \cdot (\hat{h}_{i \bar{i}}-\lambda_i).
\]
Following \cite[Section 2]{CS17} we define the function
\[
\tilde{f}_b(\lambda_1,\ldots,\lambda_{n-1}):=\lim_{\lambda_n \to \infty} f_b(\lambda_1,\ldots,\lambda_n)=\sum_{i=1}^{n-1} \frac{1}{\lambda_i}.
\]
By using the uniform positive lower bound for $\lambda_i$, one can easily observe that for any constant $\tau>0$ there exists $K=K(\tau)>0$ such that if $\lambda_n \geq K$ then
\[
f_b (\lambda_1,\ldots,\lambda_n) \leq \tilde{f}_b (\lambda_1,\ldots,\lambda_{n-1})+\tau,
\]
\[
\sum_{i=1}^{n-1} \big( \p_i f_b (\lambda_1,\ldots,\lambda_n)-\p_i \tilde{f}_b (\lambda_1,\ldots,\lambda_{n-1}) \big) \cdot (\hat{h}_{i \bar{i}}-\lambda_i)=\sum_{i=1}^{n-1} \frac{b}{\lambda_1 \cdots \lambda_n} \bigg(1-\frac{\hat{h}_{i \bar{i}}}{\lambda_i} \bigg) \leq \tau,
\]
\[
\p_n f_b(\lambda) \cdot (\hat{h}_{n \bar{n}}-\lambda_n)=\frac{1}{\lambda_n} \bigg(1+\frac{b}{\lambda_1 \cdots \lambda_{n-1}} \bigg) \bigg( 1-\frac{\hat{h}_{n \bar{n}}}{\lambda_n} \bigg) \leq \tau.
\]
Combining with the convexity of $\tilde{f}_b$ yields that
\[
\begin{aligned}
&f_b(\lambda)+\p_i f_b(\lambda) \cdot (\hat{h}_{i \bar{i}}-\lambda_i) \\
&\leq \tilde{f}_b(\lambda_1,\ldots,\lambda_{n-1})+\sum_{i=1}^{n-1} \p_i \tilde{f}_b(\lambda_1,\ldots,\lambda_{n-1}) \cdot (\hat{h}_{i \bar{i}}-\lambda_i)+3 \tau \\
&\leq \tilde{f}_b(\hat{h}_{1 \bar{1}},\ldots,\hat{h}_{n-1 \overline{n-1}})+3 \tau \\
&\leq \max_k \sum_{i \neq k} \frac{1}{\mu_i}+3\tau,
\end{aligned}
\]
where in the last inequality we used the fact that diagonal entries $(\hat{h}_{1 \bar{1}},\ldots,\hat{h}_{n \bar{n}})$ lies in the convex full of the permutations of the $\mu_i$ by the Schur--Horn theorem. Eventually if we choose $\tau=\epsilon/6$ then we have $\lambda_n \leq K(\tau)$, which yields the desired estimate.
\end{proof}
Once we obtain \eqref{trace is bounded by osc} then we can show the uniform $C^0$-estimate for $\phi_t$ by using exactly the same argument as \cite{SW08,Wei04,Wei06} since it does not depend on the equation \eqref{modified J-flow when b is non-negative}. The higher order estimate follows from the Evans--Krylov estimate \cite{Kry76,Kry82}, the Schauder estimate and the standard bootstrapping argument. Consequently, all derivatives of $\phi_t$ are uniformly bounded independent of $t$, and in particular, the flow $\phi_t$ exists for all $t \in [0,\infty)$. Now we establish the following:

\begin{thm} \label{convergence of the modified J-flow}
Assume that the reference K\"ahler form $\hat{\chi} \in \b$ is a subsolution \eqref{subsolution as a reference Kahler form}. Then along the modified $J$-flow \eqref{modified J-flow when b is non-negative}, the K\"ahler form $\chi_{\phi_t}$ converges smoothly to a solution of \eqref{generalized modified J-equation} as $t \to \infty$.
\end{thm}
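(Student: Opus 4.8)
The plan is to run the standard parabolic machinery: combine the a priori estimates already established into a uniform $C^\infty$ bound on $\chi_{\phi_t}$, then upgrade to convergence using the convexity of $J_{T,\omega,b}$. First I would use the $C^0$-estimate: by Lemma \ref{Laplacian estimate} we have $\Tr_\omega \chi_t \le C e^{N(\phi_t - \inf \phi_t)}$, and this exponential dependence on the oscillation is precisely the hypothesis under which the Weinkove-type argument (\cite{SW08,Wei04,Wei06}) produces a uniform bound $\|\phi_t\|_{C^0} \le C$ independent of $t$; the point here is that $\ddt\phi_t$ is already uniformly bounded (the lemma before Lemma \ref{Laplacian estimate}), so $\phi_t$ cannot drift too fast and the normalized oscillation is controlled. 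Feeding this back into Lemma \ref{Laplacian estimate} gives a uniform two-sided bound on the eigenvalues $\lambda_i$ of $A_t$, hence a uniform $C^2$-bound. Since $F_b$ is concave in the appropriate sense (equivalently, the flow is uniformly parabolic with $F_{-\e}$ replaced by $F_b$ convex in the relevant variable, by Proposition \ref{convexity properties when b is non-negative}) the Evans--Krylov estimate \cite{Kry76,Kry82} gives a uniform $C^{2,\g}$-bound, and then Schauder theory plus bootstrapping yields uniform $C^{k,\g}$-bounds for all $k$, uniformly on $[0,\infty)$.

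Next I would extract convergence. The functional $J_{T,\omega,b}$ is strictly convex along $C^{1,1}$ geodesics in $\cH^T$ (Proposition \ref{strict convexity of the modified J-functional}), and the modified $J$-flow is its gradient flow, so $\ddt J_{T,\omega,b}(\phi_t) = -\int_X |\ddt \phi_t|^2 \chi_t^n \le 0$. The existence of a subsolution guarantees (via Theorem \ref{existence of solutions to the generalized equation}, or directly) that $J_{T,\omega,b}$ attains its infimum, so $J_{T,\omega,b}(\phi_t)$ decreases to a finite limit and $\int_0^\infty \int_X |\ddt \phi_t|^2 \chi_t^n\, dt < \infty$. Combined with the uniform $C^\infty$-bounds, there is a sequence $t_k \to \infty$ along which $\phi_{t_k}$ converges in $C^\infty$ to some $\phi_\infty \in \cH^T$ with $\ddt \phi_t|_{t_k} \to 0$, i.e.\ $\phi_\infty$ solves \eqref{generalized modified J-equation}. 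To promote subsequential convergence to convergence of the whole flow, I would use that $\phi_\infty$ is the unique solution modulo constants (strict convexity of $J_{T,\omega,b}$ forces uniqueness of the critical point up to the obvious ambiguity), together with a standard argument: normalize so that $\int_X \phi_t \,\omega^n$ is constant in $t$ (this is consistent with the flow up to adding a time-dependent constant, or one works with $\phi_t$ directly and notes $\ddt \phi_t \to 0$ in $C^\infty$), and a Lojasiewicz-type or direct monotonicity argument shows $\phi_t \to \phi_\infty$ in $C^\infty$ as $t \to \infty$.

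The main obstacle I expect is the interplay between the $C^0$-estimate and the Hamiltonian term $\theta_v^X(\chi_t)$: in the classical $J$-flow the Weinkove argument is self-contained once \eqref{trace is bounded by osc} holds, but here one must check that the presence of $-\Re(v)$ in $\square_{L,b}$ and of $\theta_v^X(\chi_t)$ on the right-hand side of the flow do not spoil the maximum-principle comparison used to bound $\|\phi_t\|_{C^0}$ — however, since $\theta_v^X(\chi_t)$ is uniformly bounded by $C_\theta$ (independently of $\phi$) and $\Re(v)$ is a first-order operator that annihilates constants, this amounts to bookkeeping rather than a genuine new difficulty. A second, more delicate point is the upgrade from subsequential to full convergence; in the toric or trivial-torus setting this is classical, and the strict convexity of $J_{T,\omega,b}$ on $\cH^T$ together with the fact that the flow is its (downward) gradient flow should suffice, but writing this cleanly requires a little care about which normalization of $\phi_t$ one tracks.
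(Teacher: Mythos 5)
Your proposal follows essentially the same scheme as the paper: uniform a priori estimates $\Rightarrow$ $C^\infty$ precompactness of $\{\phi_t\}$, a dissipation estimate $\Rightarrow$ subsequential $C^\infty$ convergence to a critical point, and strict convexity of $J_{T,\omega,b}$ (Proposition \ref{strict convexity of the modified J-functional}) to conclude uniqueness and full convergence. The one substantive difference is in the choice of Lyapunov functional. You monitor $J_{T,\omega,b}$ itself, using the gradient-flow identity $\ddt J_{T,\omega,b}(\phi_t) = -\int_X|\ddt\phi_t|^2\chi_t^n \leq 0$; the paper instead introduces the $L^2$-residual functional
\[
E_{T,\omega,b}(\phi):=\int_X\bigl(F_{\omega,b}(\chi_\phi)-c-\theta_v^X(\chi_\phi)\bigr)^2\chi_\phi^n
\]
and proves $\ddt E_{T,\omega,b}=-2\bigl(\int_X|\nabla\sigma|_\eta^2\chi_t^n+b\int_X|\nabla\sigma|_h^2\omega^n\bigr)\leq 0$ by a somewhat lengthy integration by parts, where $\sigma=\ddt\phi_t$. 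Both functionals yield the required finiteness of the dissipation integral, so your route is equally valid and arguably simpler to initiate. Two cautions, however. First, invoking Theorem \ref{existence of solutions to the generalized equation} to assert $J_{T,\omega,b}$ is bounded below is circular, since the paper explicitly treats Theorem \ref{convergence of the modified J-flow} as the proof of that theorem; the correct justification (which you gesture at with ``or directly'') is that the uniform $C^\infty$ bounds confine $\phi_t$ to a set on which $J_{T,\omega,b}$ is bounded, whereas the paper's $E_{T,\omega,b}\geq 0$ is a pointwise fact requiring no such step. Second, the dissipation estimate only gives $\ddt\phi_{t_k}\to 0$ along a subsequence; to conclude that \emph{every} subsequential limit is a critical point (which is what makes the uniqueness argument bite) one needs a further observation, such as the monotone decrease of the nonnegative quantity $E_{T,\omega,b}$ — which is exactly what the paper's auxiliary functional provides — or a uniform time-Lipschitz bound on $\ddt\phi_t$ from parabolic regularity, or the non-increase of the Mabuchi distance to the minimizer along the gradient flow of a convex functional. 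Your mention of ``Lojasiewicz-type or direct monotonicity'' is pointing in the right direction but as written leaves this step unresolved; note that the paper's $E$-monotonicity is precisely the ``direct monotonicity'' you would want.
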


\begin{proof}
For $\phi \in \cH(X,\hat{\chi})^T$ we define
\[
E_{v,b}^\omega(\phi):=\int_X \big( F_{\omega,b}(\chi_\phi)-c-\theta_v^X(\chi_\phi) \big)^2 \chi_\phi^n.
\]
We will compute the derivative of $E_{v,b}^\omega$ along the flow. To simplify the notations we set
\[
\s:=\ddt \phi_t=-F_b(A_t)+c+\theta_v^X(\chi_t), \quad \nu:=\frac{\omega^n}{\chi_t^n}.
\]
In normal coordinates of $\chi_t$ we compute
\[
\ddt \s=h^{i \bar{\ell}} h^{k \bar{j}} g_{i \bar{j}} \s_{k \bar{\ell}}+b \nu \Delta_t \s+v(\s).
\]
Hence we have
\[
\begin{aligned}
\ddt E_{v,b}^\omega&=2 \int_X \s \ddt \s \chi_t^n+\int_X \s^2 \Delta_t \s \chi_t^n \\
&=2 \int_X \bigg( \s \ddt \s-\s |\nabla \s|_h^2 \bigg) \chi_t^n \\
&=2 \int_X \big( h^{i \bar{\ell}} h^{k \bar{j}} g_{i \bar{j}} \s \s_{k \bar{\ell}}+b \nu h^{k \bar{\ell}} \s \s_{k \bar{\ell}}+\s v(\s)-h^{k \bar{\ell}} \s \s_k \s_{\bar{\ell}} \big) \chi_t^n \\
&=-2 \int_X \big( h^{i \bar{\ell}} h^{k \bar{j}} g_{i \bar{j}} \s_k \s_{\bar{\ell}}+h^{i \bar{\ell}} h^{k \bar{j}} g_{i \bar{j},\bar{\ell}} \s \s_k+b \nu_{\bar{\ell}} h^{k \bar{\ell}} \s \s_k+b \nu h^{k \bar{\ell}} \s_k \s_{\bar{\ell}}-\s v(\s)+h^{k \bar{\ell}} \s \s_k \s_{\bar{\ell}} \big) \chi_t^n \\
&=-2 \int_X \big( h^{i \bar{\ell}} h^{k \bar{j}} g_{i \bar{j}} \s_k \s_{\bar{\ell}}+h^{i \bar{\ell}} h^{k \bar{j}} g_{i \bar{\ell},\bar{j}} \s \s_k+b \nu_{\bar{\ell}} h^{k \bar{\ell}} \s \s_k+b \nu h^{k \bar{\ell}} \s_k \s_{\bar{\ell}}-\s v(\s)+h^{k \bar{\ell}} \s \s_k \s_{\bar{\ell}} \big) \chi_t^n \\
&=-2 \int_X \big( h^{i \bar{\ell}} h^{k \bar{j}} g_{i \bar{j}} \s_k \s_{\bar{\ell}}+h^{k \bar{j}} (\Tr_{\chi_t} \omega)_{\bar{j}} \s \s_k+b \nu_{\bar{\ell}} h^{k \bar{\ell}} \s \s_k+b \nu h^{k \bar{\ell}} \s_k \s_{\bar{\ell}}-\s v(\s) \\
&-h^{k \bar{\ell}} \s \s_k (\Tr_{\chi_t} \omega+b \nu-\theta_v^X(\chi_t))_{\bar{\ell}} \big) \chi_t^n \\
&=-2 \bigg( \int_X |\nabla \s|_\eta^2 \chi_t^n+b \int_X |\nabla \s|_h^2 \omega^n \bigg) \\
& \leq 0,
\end{aligned}
\]
where the Hermitian metric $\eta$ is defined by $\eta_{i \bar{j}}:=g^{k \bar{\ell}} h_{i \bar{\ell}} h_{k \bar{j}}$. Since we have already obtained a uniform $C^\infty$-estimate for $\phi_t$, we know that
\[
\int_0^\infty \bigg( \int_X |\nabla \s|_\eta^2 \chi_t^n+b \int_X |\nabla \s|_h^2 \omega^n \bigg) dt<+\infty.
\]
By passing to a subsequence $t_k$ we get
\[
\int_X |\nabla \s(t_k)|_\eta^2 \chi_{t_k}^n+b \int_X |\nabla \s(t_k)|_h^2 \omega^n \to 0,
\]
and hence $\s(t_k) \to 0$ by $\int_X \s(t_k) \chi_{t_k}^n=0$. This shows that the limit $\chi_{t_\infty}$ satisfies the generalized equation
\[
\Tr_{\chi_{t_\infty}} \omega+b \frac{\omega^n}{\chi_{t_\infty}^n}=c+\theta_v^X(\chi_{t_\infty}),
\]
where the K\"ahler form $\chi_{t_\infty}$ does not depend on the choice of subsequences, thanks to the strict convexity property of $J_{v,b}^\omega$ (\cf Proposition \ref{strict convexity of the modified J-functional}). Thus we conclude that $\chi_{\phi_t}$ converges smoothly to the unique solution of \eqref{generalized modified J-equation}.
\end{proof}

In particular, Theorem \ref{convergence of the modified J-flow} provides a proof of Theorem \ref{existence of solutions to the generalized equation}.

\section{Coercivity of the modified $J$-functional} \label{Coercivity of the modified J-functional}
\subsection{Coercivity implies existence}
In this section we will prove Theorem \ref{existence and coercivity}. As in \cite[Proposition 21]{CS17} a first crucial observation is that if $J_v^\omega$ is coercive then so is $J_{v,-\e}^\omega$ for sufficiently small $\e>0$. For a fixed $\omega \in \a$ we take a reference K\"ahler form $\hat{\chi} \in \b$ as a solution to the complex Monge--Amp\`ere equation
\[
\hat{\chi}^n=\frac{\b^n}{\a^n} \omega^n,
\]
which is known to exist by Yau's theorem \cite{Yau78}.

\begin{prop} \label{perturbation of coercivity}
Suppose that there exist uniform constants $\d_c>0$, $B_c>0$ such that
\[
J_v^\omega(\phi) \geq \d_c I(\phi)-B_c
\]
for all $\phi \in \cH(X,\hat{\chi})^T$. Then there exists $\e_6=\e_6(n,\a,\b,\d_c)>0$ such that for all $\e \in (0,\e_6)$ and $\phi \in \cH(X,\hat{\chi})^T$ we have
\[
J_{v,-\e}^\omega(\phi) \geq \frac{\d_c}{2} I(\phi)-B_c.
\]
\end{prop}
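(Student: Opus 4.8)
The plan is to exploit the decomposition $J_{T,\omega,-\e} = J_{T,\omega} - \e\int_X \phi\,\omega^n$ (the analog of the identity used in the proof of Proposition \ref{strict convexity of the modified J-functional}, with $b=-\e$), so that the problem reduces to bounding the correction term $\e\int_X \phi\,\omega^n$ by a small multiple of $I(\phi)$ plus a constant, uniformly in $\phi \in \cH^T$. Concretely, I would show there is a constant $C_0 = C_0(\omega,\hat{\chi})$ such that
\[
\left| \int_X \phi\,\omega^n \right| \leq C_0\big(I(\phi)+1\big)
\]
for all $\phi \in \cH^T$, and then choose $\e_6 := \d_c/(2C_0)$; for $\e \in (0,\e_6)$ this gives
\[
J_{T,\omega,-\e}(\phi) = J_{T,\omega}(\phi) - \e\int_X \phi\,\omega^n \geq \d_c I(\phi) - B_c - \e C_0\big(I(\phi)+1\big) \geq \frac{\d_c}{2}I(\phi) - B_c - \e_6 C_0,
\]
and then one absorbs the extra additive constant $\e_6 C_0$ by the normalization $J_{T,\omega,-\e}(0)=0$ or simply enlarges $B_c$; since the statement as written keeps $B_c$ fixed, I would instead note $\e C_0 \le \frac{\d_c}{2} \le \frac{\d_c}{2}(I(\phi)+1)$ is too lossy, so the cleanest route is to prove the sharper bound $\bigl|\int_X\phi\,\omega^n\bigr| \le C_0 I(\phi)$ using that $\phi$ can be normalized (e.g. $\sup_X\phi = 0$ or $\int_X\phi\,\hat\chi^n$ controlled), together with the chosen reference $\hat\chi$ satisfying $\hat\chi^n = \tfrac{\b^n}{\a^n}\omega^n$.

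The key steps, in order, are: (i) record the exact decomposition $J_{T,\omega,-\e} = J_{T,\omega} - \e\int_X\phi\,\omega^n$ from the variational formula for $J_{T,\omega,b}$ with the specified normalization $J_{T,\omega,-\e}(0)=0$; (ii) establish the comparison $\bigl|\int_X \phi\,\omega^n\bigr| \le C_0\, I(\phi)$, where the positive term is handled by $\int_X\phi\,\omega^n \le \tfrac{\a^n}{\b^n}\int_X\phi\,\hat\chi^n$ and the standard inequalities relating $\int_X\phi(\hat\chi^n - \chi_\phi^n)$, $\int_X\phi\,\hat\chi^n$, $\int_X\phi\,\chi_\phi^n$ and $\sup\phi$ (cf. \cite[Section 2.1]{BBGZ13} and Remark \ref{equivalent definitions of coercivity}), after normalizing $\phi$ so that $\sup_X\phi = 0$, which is legitimate since both sides of the desired final inequality are invariant under adding constants to $\phi$ only up to a controlled shift — I would instead normalize by $\int_X\phi\,\chi_\phi^n = 0$ or work with $J = J_{T,\omega,0}$-type normalization so that $0 \le -\int_X\phi\,\omega^n \le C_0 I(\phi)$ holds outright; (iii) plug (ii) into (i) and choose $\e_6 = \d_c/(2C_0)$.

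I expect the main obstacle to be step (ii): making the comparison $\bigl|\int_X\phi\,\omega^n\bigr| \le C_0\,I(\phi)$ genuinely uniform over $\cH^T$ without picking up an additive constant that would spoil the precise form $\frac{\d_c}{2}I(\phi) - B_c$ (same $B_c$). This is where the specific choice of reference form $\hat\chi$ with $\hat\chi^n = \tfrac{\b^n}{\a^n}\omega^n$ enters: it makes $I(\phi) = \int_X\phi\,\hat\chi^n - \int_X\phi\,\chi_\phi^n = \tfrac{\b^n}{\a^n}\int_X\phi\,\omega^n - \int_X\phi\,\chi_\phi^n$, and combined with the elementary fact $\int_X\phi\,\chi_\phi^n \le \int_X\phi\,\hat\chi^n$ (monotonicity of $\phi \mapsto \int_X\phi\,\chi_\phi^n$ along $t\phi$, equivalently the Monge–Ampère energy inequality) one gets $I(\phi) \ge (1 - \tfrac{\a^n}{\b^n}\cdot\tfrac{\a^n}{\b^n})$-type control; the correct manipulation yields $-\int_X\phi\,\omega^n$ comparable to $I(\phi)$ directly, with no additive term. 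Everything else — the choice of $\e_6$, the final substitution, invoking the given coercivity hypothesis — is routine, and the convexity/normalization bookkeeping of $J_{T,\omega,-\e}(0)=0$ ensures no stray constants appear.
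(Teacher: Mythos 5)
Your proposal starts from the identity $J_{T,\omega,-\e} = J_{T,\omega} - \e\int_X\phi\,\omega^n$, citing the decomposition used in the proof of Proposition \ref{strict convexity of the modified J-functional}. This identity is not correct as an equation between the normalized functionals of the present proposition, and that is the root of the difficulty you run into. Both $J_{T,\omega,-\e}$ (with $c=c_\e$ tied to $b=-\e$ by \eqref{relation of b and c}) and $J_{T,\omega}$ are invariant under $\phi\mapsto\phi+\mathrm{const}$, since their variation vanishes on constants by construction; but $\int_X\phi\,\omega^n$ is \emph{not} shift-invariant (it changes by $C\,\a^n$). So the proposed decomposition cannot hold. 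The honest difference, obtained by integrating the variational formula along $t\mapsto t\phi$, is
\[
J_{T,\omega,-\e}(\phi)-J_{T,\omega}(\phi)=\e\int_0^1\int_X\phi\Big(\tfrac{\a^n}{\b^n}\chi_{t\phi}^n-\omega^n\Big)\,dt,
\]
and the term $\tfrac{\a^n}{\b^n}\int_0^1\int_X\phi\,\chi_{t\phi}^n\,dt$ that your decomposition drops is precisely the Monge--Amp\`ere energy, which is what makes the whole correction shift-invariant. In Proposition \ref{strict convexity of the modified J-functional} the phrase ``by adjusting constants'' silently absorbs this energy term, which is harmless there because it is affine along geodesics and hence does not affect strict convexity; but it is not harmless here, where you need a quantitative comparison.

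The consequence is that your step (ii) cannot succeed, regardless of how you normalize. The quantity $-\int_X\phi\,\omega^n$ is linear in $\phi$ and scales like $O(s)$ along $\phi=s\eta$, while $I(\phi)$ is quadratic and scales like $O(s^2)$; so $|\int_X\phi\,\omega^n|\le C_0 I(\phi)$ fails for small $s$ even after fixing $\sup_X\phi=0$. Choosing the normalization $\int_X\phi\,\chi_\phi^n=0$ forces $\int_X\phi\,\omega^n=\tfrac{\a^n}{\b^n}I(\phi)$, which would make the bound true \emph{if} the decomposition held --- but then the wrong decomposition would yield a functional identity incompatible with shift-invariance, so this ``success'' is an artifact of the false starting point. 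What actually works, and what the paper does, is to use the reference choice $\hat\chi^n=\tfrac{\b^n}{\a^n}\omega^n$ to rewrite the correct correction as $\e\tfrac{\a^n}{\b^n}\int_0^1\int_X\phi(\chi_{t\phi}^n-\hat\chi^n)\,dt$, integrate by parts to get a pure Dirichlet-type integral
\[
-\e\tfrac{\a^n}{\b^n}\tfrac{\i}{2\pi}\int_0^1 t\int_X\p\phi\wedge\bp\phi\wedge(\chi_{t\phi}^{n-1}+\cdots+\hat\chi^{n-1})\,dt,
\]
and then bound this in absolute value by a dimensional constant times $I(\phi)=\tfrac{\i}{2\pi}\int_X\p\phi\wedge\bp\phi\wedge(\chi_\phi^{n-1}+\cdots+\hat\chi^{n-1})$ by expanding $\chi_{t\phi}^{n-1-k}\wedge\hat\chi^{k}$ in the basis $\chi_\phi^{n-1-k-i}\wedge\hat\chi^{k+i}$ with nonnegative coefficients. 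Once you use the correct decomposition this is exactly the computation you need, and the normalization concerns in your proposal disappear because every piece is already shift-invariant.
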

\begin{proof}
If we subtract $J_v^\omega$ from $J_{v,-\e}^\omega$ the integrands involving Hamiltonians cancel each other out. We can therefore proceed as in \cite[Proposition 21]{CS17} to see that
\[
\begin{aligned}
J_{v,-\e}^\omega(\phi)&=J_v^\omega(\phi)+\e \int_0^1 \int_X \phi \bigg( \frac{\a^n}{\b^n} \chi_{t \phi}^n-\omega^n \bigg)dt \\
&=J_v^\omega(\phi)+\e \frac{\a^n}{\b^n} \int_0^1 \int_X \phi (\chi_{t \phi}^n-\hat{\chi}^n) dt \\
&=J_v^\omega(\phi)+\e \frac{\a^n}{\b^n} \int_0^1 \int_X t \phi \dd \phi \wedge (\chi_{t \phi}^{n-1}+\chi_{t \phi}^{n-2} \wedge \hat{\chi}+\cdots+\hat{\chi}^{n-1})dt \\
&\geq J_v^\omega(\phi)-\e \frac{\i}{2 \pi} \frac{\a^n}{\b^n} \int_0^1 \int_X \p \phi \wedge \bp \phi \wedge (\chi_{t \phi}^{n-1}+\chi_{t \phi}^{n-2} \wedge \hat{\chi}+\cdots+\hat{\chi}^{n-1})dt.
\end{aligned}
\]
By using the coercivity estimate $J_v^\omega(\phi) \geq \d_c I(\phi)-B_c$ we have
\begin{equation} \label{coercivity estimate}
\begin{aligned}
J_{v,-\e}^\omega(\phi) &\geq \d_c \frac{\i}{2 \pi} \int_X  \p \phi \wedge \bp \phi \wedge (\chi_\phi^{n-1}+\chi_{\phi}^{n-2} \wedge \hat{\chi}+\cdots+\chi^{n-1}) \\
&-\e \frac{\i}{2 \pi} \frac{\a^n}{\b^n} \int_0^1 \int_X \p \phi \wedge \bp \phi \wedge (\chi_{t \phi}^{n-1}+\chi_{t \phi}^{n-2} \wedge \hat{\chi}+\cdots+\hat{\chi}^{n-1})dt-B_c.
\end{aligned}
\end{equation}
Since $\chi_{t \phi}=(1-t)\chi+t \chi_\phi$ the integrands involving $\chi_{t \phi}$ can be expressed as
\[
\chi_{t \phi}^{n-1-k} \wedge \hat{\chi}^k=\sum_{i=0}^{n-1-k} p_i(t) \chi_\phi^{n-1-k-i} \wedge \hat{\chi}^{k+i}
\]
for all $k=0,1,\ldots,n-1$, where $p_i(t):= \binom{n-1-k}{i}(1-t)^i t^{n-1-k-i}$. This shows that the second term in the right hand side of \eqref{coercivity estimate} can be absorbed in the first term if $\e$ is sufficiently small. So there exists $\e_6>0$ such that if $\e \in (0,\e_6)$ then
\[
J_{v,-\e}^\omega(\phi) \geq \frac{\d_c}{2}I(\phi)-B_c
\]
for all $\phi \in \cH(X,\hat{\chi})^T$.
\end{proof}

From now on, we assume that $J_v^\omega(\phi) \geq \d_c I(\phi)-B_c$ holds for all $\phi \in \cH(X,\hat{\chi})^T$. Take a large number $K_0>0$ so that $Q_\omega(\hat{\chi})<K_0$ and set $\e_7:=\min\{\e_4(n,K_0+3 C_\theta), \e_5,\e_6(n,\a,\b,\d_c) \}$, where $\e_4$ is defined in Lemma \ref{F to P} and $\e_5$ is defined in Section \ref{long-time existence for b negative}. Then by Lemma \ref{ddphi and phi} and Lemma \ref{Laplacian estimate when b is negative} the modified $J$-flow starting from $0 \in \cH(X,\hat{\chi})^T$ exists for all $t \in [0,\infty)$ with a uniform bound
\begin{equation} \label{uniform bound for Q}
Q_\omega(\chi_{\phi_t})<K_0+3C_\theta.
\end{equation}
By the coercivity estimate, there is a sequence $t_k \to \infty$ such that
\[
\lim_{k \to \infty} \ddt J_{v,-\e}^\omega(\phi_{t_k})=0,
\]
and hence
\[
\lim_{k \to \infty} \int_X \big(F_{\omega,-\e}(\chi_{\phi_{t_k}})-c_\e-\theta_v^X(\chi_{\phi_{t_k}}) \big)^2 \omega^n=0,
\]
where we used the uniform lower bound for eigenvalues along the flow $\phi_t$. We normalize $\phi_{t_k}$ so that
\[
u_k:=\phi_{t_k}-\sup_X \phi_{t_k}.
\]
Then
\[
\lim_{k \to \infty} \int_X \big(F_{\omega,-\e}(\chi_{u_k})-c_\e-\theta_v^X(\chi_{u_k}) \big)^2 \omega^n=0.
\]
This shows that $F_{\omega,-\e}(\chi_{u_k})-\Re(v)(u_k) \to c_\e+\theta_v^X(\hat{\chi})$ in $L^2$ when $k \to \infty$ with $\e$ fixed. On the other hand, by using the coercivity and monotonicity along the flow, we observe that
\begin{equation} \label{uniform bound for I}
I(u_k) \leq \frac{J_{v,-\e}^\omega(u_k)+B_c}{\d_c/2} \leq \frac{J_{v,-\e}^\omega(0)+B_c}{\d_c/2} \leq C 
\end{equation}
for some uniform constant $C>0$ (independent of $\e$ and $k$). Hence, by choosing a subsequence, we may assume that $u_k \to u_\infty \in \cE^1(X,\hat{\chi})^T$ in $L^1$ and $\chi_{u_k} \to \chi_{u_\infty}:=\hat{\chi}+\dd u_\infty$ weakly as $k \to \infty$, where $\cE^1(X,\hat{\chi})^T$ denotes the space of $T$-invariant $\hat{\chi}$-PSH functions with finite energy (see Remark \ref{equivalent definitions of coercivity} and \cite[Lemma 3.3]{BBGZ13}). So by \cite[Corollary 1.8]{GZ07} the function $u_\infty$ has zero Lelong numbers.

\subsubsection{Local smoothing} \label{Local smoothing} The K\"ahler current $\chi_{u_\infty}$ can be regarded as a solution to \eqref{modified J-equation} in a weak sense. Now we establish estimates for local smoothing of $\chi_{u_\infty}$. Following the general strategy in \cite{Che21,CS17}, we locally approximate the K\"ahler form $\omega$ and the holomorphic vector field $v$ by constant-coefficient ones in each coordinate chart. This approximation makes the smoothing operator commute with differentiation. For the Hessian term, the desired estimate then follows from the monotonicity and convexity of the function $P$. However, the argument for the Hamiltonian term is more delicate. Even if $v$ is locally approximated by a constant-coefficient vector field, controlling the resulting error term requires a uniform gradient bound $|\nabla u_k|_g \leq C$. Obtaining this bound (without assuming the existence of a subsolution) seems to be a highly challenging problem in its own right. While this error term could be eliminated using the rectification theorem, this approach can not be applied directly since $v$ must have a zero on $X$. To address this issue, we perform the annulus trick as follows: let us consider a product manifold $\cX:=X \times \C$. Let $\omega_\C:=\dd |\tau|^2$ be the standard K\"ahler form on $\C$ and $w:=\i \tau \frac{\p}{\p \tau}$ a holomorphic vector field on $\C$ whose real part generates the action of the real torus $T' \simeq \S^1$ on $\C$ by rotations. Set
\[
\omega_\cX:=\omega+\omega_\C, \quad \hat{\chi}_\cX:=\hat{\chi}+\e^{-2} \omega_\C,
\]
where each term on the right-hand side is the pullback, via the projection, of a K\"ahler form defined on each component (we will also omit writing the pullback via the projection in later arguments). Similarly, we extend $v$ and $w$ to holomorphic vector fields on $\cX$ by means of the product structure, and obtain the product torus action $T \times T'$ on $\cX$ as well. Define
\[
\cX_k:=X \times \{\tau \in \C| 2^{-k}<|\tau|<2^k \}, \quad k=1,2,3.
\]
Since the holomorphic vector field $v+w$ has no zeros on $\cX_3$, for any fixed $p \in \cX_3$ we can choose local homomorphic coordinates $(w^1,\ldots,w^{n+1})$ centered at $p$ such that  $\frac{\p}{\p w^1}=v+w$ by the rectification theorem. In this coordinates, after adjusting the affine term, the local potential function $\phi$ of $\omega_\cX$ is expressed as $\phi(w)=\overline{w}^T Gw+O(|w|^3)$ for some positive definite Hermitian matrix $G=(G_{i \bar{j}})$. In particular, we have $|v+w|_{g_\cX}^2(p)=G_{1 \bar{1}}$, where $g_\cX$ denotes the Riemannian metric with respect to $\omega_\cX$. Take $B:=G^{1/2}$ so that the local potential function $\phi$ in the coordinates $z=Bw$ becomes $\phi(z)=|z|^2+O(|z|^3)$. The holomorphic vector field $v+w$ still has constant coefficients in $(z^1,\ldots,z^{n+1})$, which we denote by $c(v+w) \in \C^{n+1}$. Also a simple computation shows that $|c(v+w)|=|v+w|_{g_\cX}(p) \leq \sup_{\cX_3} |v+w|_{g_\cX}$.

Taking this into the account, we can choose sufficiently small constants $R>0$, $\s>0$ and a finite covering $\{B_{j,4R}\}_{j \in \cJ}$ of $\cX_2$ satisfying the following properties:
\begin{itemize}
\item Each $B_{j,4R}$ is a coordinate chart of $\cX$ biholomorphic to a Euclidean ball $B_{4R}(0)$ of radius $4R$ centered at the origin in $\C^{n+1}$, and $\bigcup_{j \in \cJ} B_{j,4R} \subset \cX_3$.
\item The family $\{B_{j,R}\}_{j \in \cJ}$ also forms a covering of $\cX_2$, where each $B_{j,R}$ is biholomorphic to the Euclidean ball $B_R(0) \subset \C^{n+1}$.
\item On each $B_{j,4R}$, we take local potential functions $\phi_{\omega,j}$, $\phi_{\hat{\chi},j}$ as
\[
\omega_\cX=\dd \phi_{\omega,j}, \quad \hat{\chi}_\cX=\dd \phi_{\hat{\chi},j}.
\]
We may assume that in the coordinates $(z^1,\ldots,z^{n+1})$ of $B_{j,4R}$, the holomorphic vector field $v+w$ has constant coefficients $c(v+w) \in \C^{n+1}$ and $\phi_{\omega,j}(z)=|z|^2+O(|z|^3)$. In particular, since $\p \phi_{\omega,j}(z)=O(|z|)$ and $\p \bp O(|z|^3)=O(|z|)$, we may further assume that
\[
(1-\s) \omega_j \leq \omega_\cX \leq (1+\s) \omega_j,
\]
\[
\big| \phi_{\omega,j}-|z|^2 \big| \leq \frac{1}{10000} R^2, \quad |c(v+w)|+|\nabla \phi_{\omega,j}| \leq C
\]
for some constant $C>0$ depending only on $v$, $w$ and $\omega_\cX$, where $\omega_j$ denotes the K\"ahler form on $B_{j,4R}$ with constant coefficients corresponding to the Euclidean metric $g_j$. Also we normalize $\phi_{\hat{\chi},j}$ so that $\phi_{\hat{\chi},j}(0)=0$.
\end{itemize}

For each $j \in \cJ$ and any locally $L^1$-integrable function $f$ on $B_{j,4R}$, the smoothing of $f$ at scale $r \in (0,R)$ is defined by
\begin{equation} \label{smoothing of functions}
f^{(r)}(z):=\int_{\C^{n+1}} r^{-2n-2} \r \bigg( \frac{|y|}{r} \bigg)f(z-y) d \Vol_{g_j}(y), \quad z \in B_{j,3R},
\end{equation}
where $\r(t)$ is a smooth, non-negative function supported on $[0,1]$, constant on $[0,1/2]$, and normalized so that
\[
\int_{B_1(0)} \r(|y|) d \Vol_{g_j}(y)=1.
\]
Furthermore, if $f$ is smooth, then $f^{(r)} \to f$ in $C^\infty$ as $r \to 0$. So once $\e$, $R$, $\s$ and $\{B_{j,4R}\}_{j \in \cJ}$ are fixed, there exists a constant $r_0 \in (0,R)$ such that for all $r \in (0,r_0)$ and $j \in \cJ$ we have
\begin{equation} \label{choice of r0}
\big| \Re(v+w)(\phi_{\hat{\chi},j}^{(r)}-\phi_{\hat{\chi},j}) \big| \leq \s, \quad \big| \theta_v^X(\hat{\chi})^{(r)}-\theta_v^X(\hat{\chi}) \big| \leq \s
\end{equation}
on $B_{j,3R}$. For each $k$ and $j \in \cJ$, define
\[
u_{k,j}:=\phi_{\hat{\chi},j}+u_k, \quad u_{\infty,j}:=\phi_{\hat{\chi},j}+u_\infty.
\]
By Fubini's theorem, we then have $u_{k,j} \to u_{\infty,j}$ in $L_{\rm loc}^1(B_{j,4R})$ as $k \to \infty$. Also, we have
\[
\hat{\chi}_\cX+\dd u_k=\dd u_{k,j}, \quad \hat{\chi}_\cX+\dd u_\infty=\dd u_{\infty,j},
\]
where the second equality holds in the sense of currents.

The following lemma is crucial to our argument. The key is to estimate $R_{\omega_\cX}$ rather than $P_{\omega_\cX}$. Indeed, a simple computation shows that $P_{\omega_\cX}$ produces an $O(\e)$ correction term, and hence is ineffective. However, in view of Proposition \ref{restriction formulas} (2), it suffices to estimate $R_{\omega_\cX}$ to construct a subsolution on $X$.
\begin{lem} \label{R is bounded by P and e2} For any $\e \in (0,\e_7)$, we have
\[
R_{\omega_\cX} \bigg(\dd u_{k,j} \bigg) \leq F_{\omega,-\e}(\chi_{u_k})+\e^2
\]
on $B_{j,4R}$ for all $j \in \cJ$, where the quantity in the left-hand side is calculated on $\cX$.
\end{lem}
\begin{proof}
Let $\lambda_1 \leq \ldots \leq \lambda_n$ be the eigenvalues of $\chi_{u_k}$ with respect to $\omega$. Then the eigenvalues of $\dd u_{k,j}=\chi_{u_k}+\e^{-2} \omega_\C$ with respect to $\omega_\cX$ are $\lambda_1,\ldots,\lambda_n,\e^{-2}$. If $\lambda_{n-1} \leq \e^{-2}$, then
\[
R_{\omega_\cX} \bigg(\dd u_{k,j} \bigg)=\sum_{i=1}^{n-1} \frac{1}{\lambda_i}=P_\omega(\chi_{u_k}).
\]
Otherwise we have
\[
R_{\omega_\cX} \bigg(\dd u_{k,j} \bigg)=\sum_{i=1}^{n-2} \frac{1}{\lambda_i}+\e^2 \leq P_\omega(\chi_{u_k})+\e^2.
\]
Thus combining with Lemma \ref{F to P} and \eqref{uniform bound for Q} we obtain the desired estimate.
\end{proof}

\begin{lem} \label{estimates for local regularization}
There are constants $\e_8 \in (0,\e_7)$ and $\d=\d(K_0,C_\theta)>0$ such that the following holds: for any $\e \in (0,\e_8)$ there exists constant $\s_0>0$ such that if $\s \in (0,\s_0)$ then
\begin{enumerate}
\item $R_{\omega_\cX} \bigg( \dd u_{\infty,j}^{(r)} \bigg)-\theta_v^X(\hat{\chi})-\Re(v+w)(u_{\infty,j}^{(r)}-\phi_{\hat{\chi},j}) \leq c_X-\d \e$,
\item $Q_{\omega_\cX} \bigg(\dd u_{\infty,j}^{(r)} \bigg) \leq K_0+5C_\theta$
\end{enumerate}
on $B_{j,3R}$ for all $j \in \cJ$ and $r \in (0,r_0)$, where $r_0$ is chosen in \eqref{choice of r0}.
\end{lem}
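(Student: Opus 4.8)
The plan is to show that the local mollifications $u_{\infty,j}^{(r)}$ inherit, up to errors that can be made small, the weak--solution information carried by the limit $u_\infty$ of the modified $J$-flow. I would run the argument with the \emph{smooth} flow potentials $u_k=\phi_{t_k}-\sup_X\phi_{t_k}$ (each smooth, whereas $u_\infty\in\cE^1$ need not be) and pass to $u_\infty$ only at the end, using that $\dd u_{k,j}^{(r)}\to\dd u_{\infty,j}^{(r)}$ in $C^\infty_{\loc}$ for each fixed $r$ (since $u_k\to u_\infty$ in $L^1$). The mechanism rests on four ingredients: (i) the operators $F_{\omega_j,-\e}$, $P_{\omega_j}$, $Q_{\omega_j}$ built from the \emph{constant-coefficient} Euclidean metrics $\omega_j$ are convex and monotone (Proposition~\ref{properties for non-positive case}, \cite[Section~3]{Che21}), so Jensen's inequality for the averaging operator $(\cdot)^{(r)}$ gives $G_{\omega_j}\big(\dd u_{k,j}^{(r)}\big)\le\big(G_{\omega_j}(\chi_{u_k})\big)^{(r)}$ for $G\in\{F_{-\e},P,Q\}$; (ii) Lemma~\ref{change of omega} (taking $\e=0$ for $Q$, and a similar elementary estimate for $P$) lets one replace $\omega$ by $\omega_j$ at cost $O(\s)$; (iii) Lemma~\ref{F to P} converts the weak equation for $F_{\omega,-\e}$ into a bound for $P_\omega$ under the $Q$-bound \eqref{uniform bound for Q}; and (iv) the constant-coefficient field $\Re(\xi_j)$ commutes with $(\cdot)^{(r)}$, so only the commutator of $(\cdot)^{(r)}$ with $\Re(v-\xi_j)$ survives.

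For part (2): applying (i) with $G=Q$, then Lemma~\ref{change of omega} with $\e=0$, then \eqref{uniform bound for Q},
\[
Q_{\omega_j}\big(\dd u_{k,j}^{(r)}\big)\le\big(Q_{\omega_j}(\chi_{u_k})\big)^{(r)}\le\big(Q_\omega(\chi_{u_k})+C\s\big)^{(r)}\le K_0+3C_\theta+C\s,
\]
and returning to $\omega$ by Lemma~\ref{change of omega} gives $Q_\omega\big(\dd u_{k,j}^{(r)}\big)\le K_0+3C_\theta+C'\s$. Letting $k\to\infty$ and choosing $\s_0$ with $C'\s_0<2C_\theta$ yields (2).

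For part (1): from the choice of the subsequence $t_k$ and the uniform lower eigenvalue bound along the flow, $F_{\omega,-\e}(\chi_{u_k})-\theta_v^X(\hat\chi)-\Re(v)(u_k)\to c_\e$ in $L^2(X,\omega^n)$; write $g_k$ for the difference, so $\|g_k\|_{L^2}\to0$. By Lemma~\ref{F to P} (after shrinking $\e_8$) and \eqref{uniform bound for Q}, $P_\omega(\chi_{u_k})\le F_{\omega,-\e}(\chi_{u_k})=c_\e+\theta_v^X(\hat\chi)+\Re(v)(u_k)+g_k$ pointwise on $X$. Mollifying through $\omega_j$ via (i)--(ii), using $|\theta_v^X(\hat\chi)^{(r)}-\theta_v^X(\hat\chi)|\le\s$ from the choice of $r_0$, and noting $\|g_k^{(r)}\|_{C^0}\to0$ (convolution of an $L^2$ function, for each fixed $r$), I obtain on $B_{j,3R}$
\[
P_\omega\big(\dd u_{k,j}^{(r)}\big)-\theta_v^X(\hat\chi)-\Re(v)\big(u_{k,j}^{(r)}-\phi_{\hat\chi,j}\big)\le c_\e+C\s+\|g_k^{(r)}\|_{C^0}+\cR_k^{(r)},
\]
where $\cR_k^{(r)}:=\big(\Re(v)(u_{k,j})\big)^{(r)}-\Re(v)\big(u_{k,j}^{(r)}\big)$ is the commutator of $\Re(v)$ with the mollification. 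Since $c_\e=c_X-\e\,\a^n/\b^n$, fixing $\d:=\tfrac12\,\a^n/\b^n$, then choosing $\s_0=\s_0(\e)$, then taking $k\to\infty$, and finally passing to $u_\infty$, the conclusion (1) follows \emph{provided} $|\cR_k^{(r)}|\le\tfrac12\d\e$ uniformly in $r\in(0,r_0)$ and in large $k$.

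The main obstacle is exactly this uniform control of $\cR_k^{(r)}$. Writing $\cR_k^{(r)}(z)=\sum_p\int_{\C^n}\Phi_r(y)\big[v^p(z-y)-v^p(z)\big]\,\p_p u_{k,j}(z-y)\,d\Vol(y)$ with $\Phi_r(y):=r^{-2n}\r(|y|/r)$, one has $|v^p(z-y)-v^p(z)|\lesssim|y|\le r$ on the support of $\Phi_r$, so $|\cR_k^{(r)}|\lesssim r\,\big(|\nabla u_{k,j}|\big)^{(r)}$; making this small uniformly in the mollification scale $r$ --- not merely in $k$ --- is the delicate point, and it is here that the uniform Dirichlet energy estimate \eqref{uniform bound for Dirichlet energy}, obtained from the coercivity bound \eqref{uniform bound for I} through \eqref{formula for I}, is indispensable: a Cauchy--Schwarz argument bounding $\big(|\nabla u_{k,j}|\big)^{(r)}$ by $\big((|\nabla u_{k,j}|^2)^{(r)}\big)^{1/2}$, together with the holomorphy of $v$ (so that the leading Taylor term of $v^p$ is paired against the vanishing odd moments of the radial kernel $\Phi_r$), renders $\cR_k^{(r)}$ negligible. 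This is the genuinely new feature relative to the $v=0$ case of \cite{CS17}: there the $\Re(v)$-term is absent, Chen's K\"ahler-current construction (which tolerates infinite Monge--Amp\`ere energy) suffices, and no Dirichlet bound is needed, whereas here one must run the argument inside $\cE^1$ with \eqref{uniform bound for Dirichlet energy} in hand. Finally, since the holomorphic coordinates are not compatible with the $T$-action the forms $\dd u_{k,j}^{(r)}$ are not $T$-invariant, which is why this lemma only yields local subsolutions on the $B_{j,3R}$, to be glued by Proposition~\ref{gluing of subsolutions} and then $T$-averaged by Proposition~\ref{averaging of subsolutions} afterwards.
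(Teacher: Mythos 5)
Your overall architecture matches the paper's proof closely: run the estimates on the smooth flow potentials $u_k$, pass to $u_\infty$ at the end; exploit the constant-coefficient Euclidean metrics $\omega_j$ so that Jensen's inequality for the convolution applies to the convex operators; switch between $\omega$ and $\omega_j$ via Lemma~\ref{change of omega}; introduce the constant-coefficient field $\xi_j$ to make the vector-field derivative commute exactly with mollification; invoke the Dirichlet energy bound \eqref{uniform bound for Dirichlet energy} to control the residual $\Re(v-\xi_j)$ contribution; and finally pass from $F_{\omega,-\e}$ to $P_\omega$ via Lemma~\ref{F to P}. Two differences are worth flagging.

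The first is minor. You apply Lemma~\ref{F to P} at the start (to $\chi_{u_k}$) and then mollify $P_\omega$, which requires both the convexity of $P_{\omega_j}$ and a ``change of metric'' estimate $|P_{\omega_j}-P_\omega|\lesssim\sigma$ that Lemma~\ref{change of omega} does not state; you acknowledge this with a parenthetical. The paper sidesteps this by doing all of the mollification in the $F_{\omega_j,-\e}$--world (for which both Proposition~\ref{properties for non-positive case} and Lemma~\ref{change of omega} are already available) and converting to $P_\omega$ only at the very end via Lemma~\ref{F to P} applied to $\dd u_{\infty,j}^{(r)}$ (which is legitimate because of the $Q$-bound you establish in part~(2)). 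Your route is salvageable, but the paper's order is more economical.

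The second is a genuine gap. Your final bound on the commutator
$\cR_k^{(r)}(z)=\sum_p\int\Phi_r(y)\bigl[v^p(z-y)-v^p(z)\bigr]\p_p u_{k,j}(z-y)\,d\Vol(y)$
rests on the claim that ``the leading Taylor term of $v^p$ is paired against the vanishing odd moments of the radial kernel.'' This cancellation would occur if $\p_p u_{k,j}(z-y)$ were frozen at $z$, but it is not: the odd moment $\int\Phi_r(y)\,y^q\,d\Vol(y)=0$ does not imply $\int\Phi_r(y)\,y^q\,\p_p u_{k,j}(z-y)\,d\Vol(y)=0$, and you have no second-order control on $u_{k,j}$ at this stage (that is precisely what the lemma is trying to produce). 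Thus the factor you extract is $r\cdot\bigl(|\nabla u_{k,j}|\bigr)^{(r)}$, with no cancellation. The paper does not form the commutator at all: it keeps the two terms $\int\Phi_r(y)\Re(v-\xi_j)(u_k)(z-y)\,d\Vol$ and $\Re(v-\xi_j)(u_k^{(r)})$ separate, and bounds each by $C\sigma$ using the pointwise smallness $|v^p-c^p|\le\sigma$ built into the choice of the covering $\{B_{j,4R}\}$ (together with \eqref{uniform bound for Dirichlet energy} for the gradient factor). In particular the smallness parameter is $\sigma$, which is a free parameter you pick last, not the mollification scale $r$, which must range over the whole interval $(0,r_0)$. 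Replacing your Taylor expansion and odd-moment argument by the $|v^p-c^p|\le\sigma$ bound would bring your proof in line with the paper's.
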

\begin{proof}
In what follows, let $C$ denote a constant depending only on $K_0$ and $C_\theta$, which may vary from line to line. First, by \eqref{uniform bound for Q} we compute
\[
Q_{\omega_\cX} \bigg(\dd u_{k,j} \bigg)=Q_\omega (\chi_{u_k})+\e^2<K_0+3C_\theta+\e^2.
\]
Thus by Lemma \ref{change of omega} we know that
\begin{equation} \label{omega and constant metric Q}
\bigg| Q_{\omega_j} \bigg(\dd u_{k,j} \bigg)-Q_{\omega_\cX} \bigg( \dd u_{k,j} \bigg) \bigg| \leq C \s,
\end{equation}
\begin{equation} \label{omega and constant metric F}
\bigg| R_{\omega_j} \bigg(\dd u_{k,j} \bigg)-R_{\omega_\cX} \bigg( \dd u_{k,j} \bigg) \bigg| \leq C \s.
\end{equation}
By using the convexity of $Q_{\omega_j}$, \eqref{omega and constant metric Q} and the fact that the coefficients of $\omega_j$ are constants we have
\[
\begin{aligned}
\frac{1}{1+\s} Q_{\omega_\cX} \bigg(\dd u_{k,j}^{(r)} \bigg)(z) &\leq
Q_{\omega_j} \bigg(\dd u_{k,j}^{(r)} \bigg)(z) \\
&\leq \int_{\C^{n+1}} r^{-2n-2} \r \bigg(\frac{|y|}{r} \bigg) Q_{\omega_j} \bigg( \dd u_{k,j} \bigg)(z-y) d \Vol_{g_j}(y) \\
&\leq \int_{\C^{n+1}} r^{-2n-2} \r \bigg(\frac{|y|}{r} \bigg) Q_{\omega_\cX} \bigg( \dd u_{k,j} \bigg)(z-y) d \Vol_{g_j}(y)+C \s \\
& \leq K_0+3C_\theta+\e^2+C \s.
\end{aligned}
\]
So if $\s<\e^2$ and $\e$ is small so that
\[
\e^2 \leq \frac{2C_\theta}{K_0+3C_\theta+2C+2},
\]
then
\[
Q_{\omega_\cX} \bigg(\dd u_{k,j}^{(r)} \bigg) \leq K_0+5C_\theta.
\]
By letting $k \to \infty$ we have
\[
Q_{\omega_\cX} \bigg(\dd u_{\infty,j}^{(r)} \bigg) \leq K_0+5C_\theta,
\]
which shows (2). Next, since $\omega_j$ and $v+w$ have constant coefficients, by using \eqref{omega and constant metric F}, convexity of $R_{\omega_j}$, $\Re(w)(u_k)=0$ and Lemma \ref{R is bounded by P and e2} we have
\[
\begin{aligned}
&R_{\omega_j} \bigg( \dd u_{k,j}^{(r)} \bigg)(z)-\Re(v+w)(u_k^{(r)})(z) \\
& \leq \int_{\C^{n+1}} r^{-2n-2} \r \bigg(\frac{|y|}{r} \bigg) \bigg[ R_{\omega_j} \bigg(\dd u_{k,j} \bigg)(z-y)-\Re(v+w)(u_k)(z-y) \bigg] d \Vol_{g_j}(y) \\
& \leq \int_{\C^{n+1}} r^{-2n-2} \r \bigg(\frac{|y|}{r} \bigg) \bigg[ R_{\omega_\cX} \bigg(\dd u_{k,j} \bigg)(z-y)-\Re(v)(u_k)(z-y) \bigg] d \Vol_{g_j}(y)+C \s \\
&\leq \int_{\C^{n+1}} r^{-2n-2} \r \bigg(\frac{|y|}{r} \bigg) \big[ F_{\omega,-\e} (\chi_{u_k})(z-y)-\Re(v)(u_k)(z-y) \big] d \Vol_{g_j}(y)+C \s+\e^2 \\
\end{aligned}
\]
for all $z \in B_{j,3R}$ and $j \in \cJ$. Since $F_{\omega,-\e}(\chi_{u_k})-\Re(v)(u_k) \to c_\e+\theta_v^X(\hat{\chi})$ in $L_{\rm loc}^2(\cX)$, the first term converges to $c_\e+\big(\theta_V(\hat{\chi}) \big)^{(r)}(z)$ as $k \to \infty$. Thus by letting $k \to \infty$ and using \eqref{choice of r0} we get
\begin{equation}
\begin{aligned} \label{Fbbg}
R_{\omega_j} \bigg( \dd u_{\infty,j}^{(r)} \bigg)-\Re(v+w)(u_\infty^{(r)}) &\leq c_\e+\big(\theta_v^X(\hat{\chi}) \big)^{(r)}+C \s+\e^2\\
&\leq c_\e+\theta_v^X(\hat{\chi})+C \s+\e^2
\end{aligned}
\end{equation}
on $B_{j,3R}$ for all $j \in \cJ$ and $r \in (0,r_0)$. Meanwhile, by using (2) and Lemma \ref{change of omega}, we find that
\begin{equation} \label{eqFF}
\bigg|R_{\omega_j} \bigg(\dd u_{\infty,j}^{(r)} \bigg)-R_{\omega_\cX} \bigg(\dd u_{\infty,j}^{(r)} \bigg) \bigg| \leq C \s.
\end{equation}
Combining \eqref{Fbbg} and \eqref{eqFF}, we obtain
\[
R_{\omega_\cX} \bigg(\dd u_{\infty,j}^{(r)} \bigg)-\theta_v^X(\hat{\chi})-\Re(v+w)(u_\infty^{(r)}) \leq c_\e+C \s+\e^2.
\]
By definition, we have $u_{\infty,j}^{(r)}=(\phi_{\hat{\chi},j}+u_\infty)^{(r)}=\phi_{\hat{\chi},j}^{(r)}+u_\infty^{(r)}$, which implies that
\[
\Re(v+w)(u_{\infty,j}^{(r)}-\phi_{\hat{\chi},j})=\Re(v+w)(u_\infty^{(r)})+\Re(v+w)(\phi_{\hat{\chi},j}^{(r)}-\phi_{\hat{\chi},j}) \geq \Re(v+w)(u_\infty^{(r)})-\s
\]
from our choice of $r \in (0,r_0)$. Consequently, we obtain
\[
R_{\omega_\cX} \bigg(\dd u_{\infty,j}^{(r)} \bigg)-\theta_v^X(\hat{\chi})-\Re(v)(u_{\infty,j}^{(r)}-\phi_{\hat{\chi},j}) \leq c_\e+C \s+\e^2.
\]
Recalling that $c_\e=c_X-\e \frac{\a^n}{\b^n}$ by \eqref{constant ce}, we obtain (1) for some $\d>0$ provided that $\s<\e^2$ and $\e$ is sufficiently small.
\end{proof}

\subsubsection{Gluing argument} We now perform suitable modifications of the functions $u_{\infty,j}^{(r)}$ and glue them together to construct a smooth potential function on $\cX_1$. To this end, as explained in Section \ref{Subsolutions}, we need to verify the gluing conditions. Let $\lambda_1 \leq \cdots \leq \lambda_{n+1}$ be the eigenvalues of $\dd u_{\infty,j}^{(r)}$ with respect to $\omega_\cX$. Then Lemma \ref{estimates for local regularization} (2) yields that
\[
\sum_{i=1}^{n+1} \frac{1}{\lambda_i} \leq K_0+5C_\theta,
\]
which gives a uniform bound $\lambda_i \geq (K_0+5C_\theta)^{-1}$. By using this we observe that the form $\dd u_{\infty,j}^{(r)}-\e^2 \omega_\cX$ is K\"ahler and
\[
\begin{aligned}
R_{\omega_\cX} \bigg( \dd u_{\infty,j}^{(r)}-\e^2 \omega_{\cX} \bigg)&=\sum_{i=1}^{n-1} \frac{1}{\lambda_i-\e^2} \\
&=\sum_{i=1}^{n-1} \frac{1}{\lambda_i}+\sum_{i=1}^{n-1} \frac{\e^2}{(\lambda_i-\e^2) \lambda_i} \\
&\leq \sum_{i=1}^{n-1} \frac{1}{\lambda_i}+C \e^2 \\
&=R_{\omega_{\cX}} \bigg( \dd u_{\infty,j}^{(r)} \bigg)+C \e^2
\end{aligned}
\]
by decreasing $\e_8$ if necessary, where $C=C(K_0,C_\theta)>0$ is a constant. Also we have
\[
\begin{aligned}
\Re(v+w)(u_{\infty,j}^{(r)}-\phi_{\hat{\chi},j}-\e^2 \phi_{\omega,j})&=\Re(v+w)(u_{\infty,j}^{(r)}-\phi_{\hat{\chi},j})-\e^2 \Re(v+w)(\phi_{\omega,j})\\
&\geq \Re(v+w)(u_{\infty,j}^{(r)}-\phi_{\hat{\chi},j})-C \e^2,
\end{aligned}
\]
where we used $|c(v+w)|+|\nabla \phi_{\omega,j}| \leq C$. Thus by decreasing $\e_8$ again we get
\[
R_{\omega_\cX} \bigg( \dd u_{\infty,j}^{(r)}-\e^2 \omega_\cX \bigg)-\theta_v^X(\hat{\chi})-\Re(v+w)(u_{\infty,j}^{(r)}-\phi_{\hat{\chi},j}-\e^2 \phi_{\omega,j}) \leq c_X-\d \e+C \e^2<c_X.
\]
Now we fix $\e \in (0,\e_8)$, $\s \in (0,\s_0)$, $R>0$ and $\{B_{j,4R}\}_{j \in \cJ}$. In particular, we have a uniform bound $|\nabla \phi_{\hat{\chi},j}| \leq S$ on each $B_{j,4R}$ for some $S>0$. Define
\[
\varphi_{j,r}:=u_{\infty,j}^{(r)}-\phi_{\hat{\chi},j}-\e^2 \phi_{\omega,j}
\]
so that
\[
\hat{\chi}_\cX+\dd \varphi_{j,r}=\dd u_{\infty,j}^{(r)}-\e^2 \omega_\cX>0,
\]
\[
R_{\omega_\cX} \bigg( \hat{\chi}_\cX+\dd \varphi_{j,r} \bigg)-\theta_v^X(\hat{\chi})-\Re(v+w)(\varphi_{j,r})<c_X.
\]
For $z \in B_{j,3R}$ and $r \in (0,R)$, let
\[
u_{j,r}(z):=\sup_{B_{j,r}(z)} u_{\infty,j},
\]
where $B_{j,r}(z)$ denotes a Euclidean ball of radius $r$ centered at $z$ in $B_{j,4R}$. For $r \in (0,R/2)$, we define
\[
\nu_{j,u_{\infty,j}}(z,r):=\frac{u_{j,\frac{3}{4}R}(z)-u_{j,r}(z)}{\log \big( \frac{3}{4}R \big)-\log r}, \quad z \in B_{j,3R}.
\]
As $r \to 0$ the quantity $\nu_{j,u_{\infty,j}}(z,r)$ converges decreasingly to the Lelong number of $u_{\infty}$ with respect to a reference metric $\hat{\chi}_\cX$:
\[
\lim_{r \to 0} \nu_{j,u_{\infty,j}}(z,r)=\nu_{u_\infty}(z).
\]
Note that $\nu_{u_\infty}(z)=0$ for all $z \in B_{j,3R}$ since $\chi_{u_\infty}$ (and hence its pullback to $\cX$) has zero Lelong numbers. By adapting the idea from B{\l}ocki--Ko{\l}odziej \cite{BK07}, Chen \cite[Lemma 4.2]{Che21} proved the following lemma, which states that the difference between $u_{j,r}(z)$ and $u_{\infty,j}^{(r)}(z)$ can be controlled in terms of $\nu_{j,u_{\infty,j}}(z,r)$.
\begin{lem}
For any $r \in (0,R/2)$ and $z \in B_{j,3R}$ we have
\begin{enumerate}
\item $0 \leq u_{j,r}(z)-u_{j,\frac{r}{2}}(z) \leq (\log 2) \nu_{j,u_{\infty,j}}(z,r)$.
\item $0 \leq u_{j,r}(z)-u_{\infty,j}^{(r)}(z) \leq \eta \nu_{j,u_{\infty,j}}(z,r)$, where $\eta>0$ is a universal constant depending only on $n$.
\end{enumerate}
\end{lem}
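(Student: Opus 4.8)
The assertion is local, on the coordinate ball $B_{j,4R}\cong B_{4R}(0)\subset\C^n$, where $u:=u_{\infty,j}$ is plurisubharmonic since $\dd u_{\infty,j}=\chi_{u_\infty}\geq 0$ as a positive current. Everything rests on one elementary fact: for plurisubharmonic $\varphi$ and fixed $z$, the function $t\mapsto\sup_{B_{e^t}(z)}\varphi$ is non-decreasing (nested balls) and convex. Convexity follows by restricting $\varphi$ to complex lines through $z$ — each restriction is subharmonic on a disc, so its maximum over circles is convex in the log-radius by the Hadamard three-circles theorem — and then taking the supremum over directions, which preserves convexity. The same reasoning shows the spherical mean $L_\varphi(\tau):=\frac{1}{|\p B_\tau(z)|}\int_{\p B_\tau(z)}\varphi\,d\sigma$ is non-decreasing and convex in $\log\tau$: via the Hopf fibration $S^{2n-1}\to\P^{n-1}$ one writes $L_\varphi$ as an average over complex lines of circular means, each log-convex.

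Part (1) is then immediate. The left inequality is monotonicity of $\tau\mapsto u_{j,\tau}(z)$. For the right one, put $g(t):=u_{j,e^t}(z)$, convex, and apply the slope inequality at the three points $\log(r/2)<\log r<\log\tfrac34 R$ (legitimate since $r<R/2<\tfrac34 R$):
\[
\frac{g(\log r)-g(\log(r/2))}{\log 2}\;\leq\;\frac{g(\log\tfrac34 R)-g(\log r)}{\log\tfrac34 R-\log r}\;=\;\nu_{j,u_{\infty,j}}(z,r),
\]
which, multiplied by $\log 2$, is the claim.

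For part (2) the left inequality holds because $u_{\infty,j}^{(r)}(z)=\int_{\C^n}r^{-2n}\r(|y|/r)u(z-y)\,d\Vol_{g_j}(y)$ is a probability average of values of $u$ over $B_{j,r}(z)$, hence $\leq\sup_{B_{j,r}(z)}u=u_{j,r}(z)$. The right inequality is the substance of \cite[Lemma 4.2]{Che21} (extending \cite{BK07}); I would argue as follows. After subtracting a constant assume $u_{j,r}(z)=0$, so $u\leq 0$ on $B_{j,r}(z)$ and $\nu_{j,u_{\infty,j}}(z,r)=u_{j,\frac34 R}(z)/\log(3R/(4r))\geq 0$. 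Since $\r$ is radial, polar coordinates give $u_{\infty,j}^{(r)}(z)=\int_0^1 L(rs)\,w(s)\,ds$ for an explicit weight $w\geq 0$ with $\int_0^1 w=1$, where $L:=L_u$. Writing $L=M-\Delta$ with $M(\tau):=u_{j,\tau}(z)\leq 0$ and $\Delta:=M-L\geq 0$, the deficit splits as
\[
-u_{\infty,j}^{(r)}(z)\;=\;\int_0^1\big(-M(rs)\big)\,w(s)\,ds\;+\;\int_0^1\Delta(rs)\,w(s)\,ds.
\]
The first integral is at most the left derivative of $\log\tau\mapsto M(\tau)$ at $\tau=r$ times the dimensional constant $\int_0^1(-\log s)\,w(s)\,ds$, and by convexity of $M$ that derivative is $\leq\nu_{j,u_{\infty,j}}(z,r)$. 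For the second, at each radius $\tau$ the gap between the maximum and the spherical mean of a subharmonic function is controlled by its Riesz mass at that scale, $\Delta(\tau)\leq C_n\,\mu(\overline{B_\tau(z)})\,\tau^{2-2n}$ with $\mu$ the Riesz measure of $u$ (equivalently the trace of $\frac{\i}{2\pi}\p\bp u$), while the same mass is controlled by $\nu$ through $\mu(B_r(z))\,r^{2-2n}\leq C_n\,\nu_{j,u_{\infty,j}}(z,r)$ (the spherical mean grows at least as fast as the monotone accumulated Riesz density); since $\int_0^1 s^{2-2n}w(s)\,ds<\infty$, this term is also $\leq C_n\,\nu_{j,u_{\infty,j}}(z,r)$. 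Summing gives the inequality with $\eta=\eta(n)$.

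The only real obstacle is this upper bound in (2): a priori the mollification $u_{\infty,j}^{(r)}(z)$, being an average over a full Euclidean ball, could lie far below $u_{j,r}(z)$ because $u$ is very negative near its poles. The point of the B{\l}ocki--Ko{\l}odziej/Chen argument is that any such excess is paid for by a proportionally large $\nu_{j,u_{\infty,j}}(z,r)$, which one extracts from the log-convexity of the radial profile of $u$ — equivalently, from the growth of the Riesz mass of $u$ across the scales between $r$ and $\tfrac34 R$ — the rest being routine manipulations of convex functions. For a fully detailed treatment one may simply invoke \cite[Lemma 4.2]{Che21}.
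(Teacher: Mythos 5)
The paper does not prove this lemma at all — it is stated with the attribution ``Chen \cite[Lemma 4.2]{Che21} proved the following lemma,'' and the text immediately moves on. So there is no in-paper proof to compare against; your proposal is an attempt to reconstruct Chen's argument.

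Part (1) is correct and cleanly argued: $t\mapsto u_{j,e^t}(z)$ is convex (via Hadamard three circles along complex lines through $z$, and suprema preserve convexity), and the three-slope inequality at $\log(r/2)<\log r<\log(\tfrac34 R)$ gives exactly the stated bound. This matches the standard treatment.

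Part (2) has a genuine gap in the claimed potential-theoretic estimate. You assert that
\[
\Delta(\tau):=M(\tau)-L(\tau)\;\leq\;C_n\,\mu(\overline{B_\tau(z)})\,\tau^{2-2n},
\]
with $\mu$ the Riesz measure of $u$. This is false for subharmonic functions: if $u$ is harmonic and non-constant (say $u(y)=\Re(y_1)+\text{const}$), then $\mu\equiv 0$ while $\Delta(\tau)=\tau>0$. The gap between the ball supremum and the spherical mean is \emph{not} controlled by Riesz mass alone; it carries a contribution from the oscillation of the harmonic part of $u$. The correct argument (as in B{\l}ocki--Ko{\l}odziej and in Chen's Lemma 4.2) splits $u=P+H$ into the Green potential $P\leq 0$ and a harmonic part $H$, and handles $H$ with the Harnack inequality for $-H\geq 0$ (after the normalization $\sup_{B_r}u=0$), not with the Riesz mass. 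Your secondary claim $\mu(B_r(z))\,r^{2-2n}\leq C_n\,\nu_{j,u_{\infty,j}}(z,r)$ is likewise not justified as stated, since it quietly needs a comparison between $M$- and $L$-increments that doesn't follow just from $M\geq L$. Since you ultimately defer to ``invoke \cite[Lemma 4.2]{Che21}'' — which is precisely what the paper does — the fallback is fine, but the sketch you interpolate in between is not a proof: the key estimate you propose would, if correct, not even see the harmonic obstruction, and that is where the real work in Chen's lemma lies.
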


\begin{lem} \label{gluing condition}
There exists $r_1 \in (0,r_0)$ such that for any $r \in (0,r_1)$ and $z \in (B_{j,3R} \backslash B_{j,2R}) \cap B_{i,R}$, we have
\[
\varphi_{j,r}(z) \leq \varphi_{i,r}(z)-\frac{\e^2 R^2}{2}.
\]
\end{lem}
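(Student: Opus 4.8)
\textit{Proof proposal.} The plan is to verify the asserted inequality one point at a time. Fix $z\in X$; since $\{B_{j,R}\}_{j\in\cJ}$ covers $X$, choose an index $k_0=k_0(z)$ with $z\in B_{k_0,R}$. When $z$ lies in no set $B_{j,3R}\setminus B_{j,2R}$ the left--hand maximum is $-\infty$ and there is nothing to prove, so it suffices to find $r_1\in(0,r_0)$ (allowed to depend on the already fixed $\e$) such that for all $r\in(0,r_1)$ and all $j$ with $z\in B_{j,3R}\setminus B_{j,2R}$ one has
\[
\varphi_{j,r}(z)\ \le\ \varphi_{k_0,r}(z)-\frac{\e^2R^2}{2}.
\]

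First I would record the two one--sided pointwise bounds. Since $u_{\infty,j}$ is plurisubharmonic on $B_{j,4R}$, its mollification is dominated by the supremum over the ball, $u_{\infty,j}^{(r)}(z)\le u_{j,r}(z)$, whence $\varphi_{j,r}(z)\le u_{j,r}(z)-\phi_{\hat\chi,j}(z)-\e^2\phi_{\omega,j}(z)$. For the inner index, part (2) of the Błocki--Kołodziej--Chen lemma gives $u_{k_0,r}(z)-u_{\infty,k_0}^{(r)}(z)\le\eta\,\nu_{k_0,u_{\infty,k_0}}(z,r)$, whence $\varphi_{k_0,r}(z)\ge u_{k_0,r}(z)-\eta\,\nu_{k_0,u_{\infty,k_0}}(z,r)-\phi_{\hat\chi,k_0}(z)-\e^2\phi_{\omega,k_0}(z)$. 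The genuine gain comes from the quadratic terms: in the chart $B_{j,4R}$ one has $|z|\ge 2R$, in $B_{k_0,4R}$ one has $|z|<R$, and $\bigl|\phi_{\omega,\bullet}-|z|^2\bigr|\le\frac1{10000}R^2$ in each chart, so $\phi_{\omega,j}(z)-\phi_{\omega,k_0}(z)>2R^2$. Subtracting the two bounds,
\[
\varphi_{k_0,r}(z)-\varphi_{j,r}(z)\ \ge\ \bigl[u_{k_0,r}(z)-\phi_{\hat\chi,k_0}(z)\bigr]-\bigl[u_{j,r}(z)-\phi_{\hat\chi,j}(z)\bigr]-\eta\,\nu_{k_0,u_{\infty,k_0}}(z,r)+2\e^2R^2.
\]

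Next I would control the chart--transition error, i.e.\ the first bracketed difference. Because $u_{\infty,j}-\phi_{\hat\chi,j}=u_\infty=u_{\infty,k_0}-\phi_{\hat\chi,k_0}$ globally on $X$ and $|\nabla\phi_{\hat\chi,\bullet}|\le S$, each of $u_{j,r}(z)-\phi_{\hat\chi,j}(z)$ and $u_{k_0,r}(z)-\phi_{\hat\chi,k_0}(z)$ agrees, up to an additive $O(r)$, with the supremum of $u_\infty$ over the corresponding coordinate ball. Since $B_{j,r}(z)$ is contained in $B_{k_0,C_0r}(z)$ with $C_0$ depending only on the fixed finite covering, the difference is bounded by $u_{k_0,C_0r}(z)-u_{k_0,r}(z)$; reading off the definition of $\nu_{k_0,u_{\infty,k_0}}$ and using its monotonicity in the radius gives $u_{k_0,C_0r}(z)-u_{k_0,r}(z)\le(\log C_0)\,\nu_{k_0,u_{\infty,k_0}}(z,C_0r)$. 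Altogether
\[
\varphi_{k_0,r}(z)-\varphi_{j,r}(z)\ \ge\ 2\e^2R^2-C\Bigl(r+\sup_{w\in\overline{B_{k_0,3R}}}\nu_{k_0,u_{\infty,k_0}}(w,C_0r)\Bigr),
\]
with $C$ depending only on $\eta$, $S$ and the covering.

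The one genuinely delicate point — the part I would spend the most care on — is the uniform statement that $\sup_{w\in\overline{B_{j,3R}}}\nu_{j,u_{\infty,j}}(w,r)\to 0$ as $r\to 0$, uniformly over the finitely many $j\in\cJ$. This is where the vanishing of all Lelong numbers of $u_\infty$ enters: by \cite[Corollary 1.8]{GZ07} one has $\nu_{u_\infty}\equiv 0$, so $\nu_{j,u_{\infty,j}}(w,r)$ decreases to $\nu_{u_\infty}(w)=0$ pointwise on the compact set $\overline{B_{j,3R}}$, and a Dini--type compactness argument (monotone convergence of upper semicontinuous functions on a compact space, with $\cJ$ finite) upgrades this to uniform convergence, just as in \cite[Section 4]{Che21}. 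Granting this, I choose $r_1\in(0,r_0)$ so small that for $r\in(0,r_1)$ the error term above is $<\frac32\e^2R^2$ (this uses only that $\e$ has already been fixed); then $\varphi_{k_0,r}(z)-\varphi_{j,r}(z)\ge\frac{\e^2R^2}{2}$ for every outer index $j$, and since $z\in B_{k_0,R}$ the right--hand maximum in the statement dominates $\varphi_{k_0,r}(z)$. Taking the maximum over the outer indices $j$ completes the argument.
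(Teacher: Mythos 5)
Your proof is correct and follows essentially the same route as the paper's: both extract the $O(\e^2 R^2)$ gain from the quadratic terms $\phi_{\omega,j}-\phi_{\omega,k_0}$, compare suprema of $u_\infty$ over comparable coordinate balls using the covering construction, and absorb the residual error into $\nu_{j,u_{\infty,j}}(\cdot,r)$, made uniformly small via vanishing Lelong numbers and a Dini--Cartan compactness argument. The only cosmetic difference is in how the ball comparison is organized (the paper first shrinks the outer radius to $r/2$ via B\l ocki--Ko\l odziej--Chen so that $B_{j,r/2}(z)\subset B_{i,r}(z)$, whereas you keep radius $r$ outside and dilate the inner ball to $C_0 r$, paying a $(\log C_0)\nu$ error), but these are interchangeable.
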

\begin{proof}
The function $\nu_{j,u_{\infty,j}}(z,r)$ is increasing with respect to $r$ (when fixing $z$) and upper semi-continuous with respect to $z$ (when fixing $r$). Moreover, the Lelong number $\nu_{u_\infty}$ is zero everywhere. Thus we can apply the Dini--Cartan lemma to know that for a constant $A>0$ determined larter, after decreasing $r_0$ for any $r \in (0,r_0)$ and $z \in B_{j,3R}$ we have
\[
0 \leq \nu_{j,u_{\infty,j}}(z,r) \leq A^{-1} \e^2.
\]
We compute
\[
\begin{aligned}
u_{\infty,j}^{(r)}(z) &\leq u_{j,r}(z) \\
&\leq u_{j,\frac{r}{2}}(z)+(\log 2) A^{-1} \e^2 \\
&\leq \sup_{B_{j,\frac{r}{2}}(z)}(\phi_{\hat{\chi},j}+u_\infty)+(\log 2) A^{-1} \e^2 \\
&\leq \sup_{B_{j,\frac{r}{2}}(z)}\phi_{\hat{\chi},j}+\sup_{B_{j,\frac{r}{2}}(z)} u_\infty+A^{-1} \e^2.
\end{aligned}
\]
It follows that
\[
\begin{aligned}
\varphi_{j,r}(z)&=u_{\infty,j}^{(r)}(z)-\phi_{\hat{\chi},j}(z)-\e^2 \phi_{\omega,j}(z) \\
&\leq \sup_{B_{j,\frac{r}{2}}(z)} u_\infty+\sup_{B_{j,\frac{r}{2}}(z)}\phi_{\hat{\chi},j}-\phi_{\hat{\chi},j}(z)+A^{-1} \e^2-\e^2 \phi_{\omega,j}(z) \\
& \leq \sup_{B_{j,\frac{r}{2}}(z)} u_\infty+Sr+A^{-1}\e^2-3 \e^2 R^2.
\end{aligned}
\]
On the other hand,
\[
\begin{aligned}
u_{\infty,i}^{(r)}(z) &\geq u_{i,r}(z)-\eta A^{-1} \e^2 \\
&=\sup_{B_{i,r}(z)}(\phi_{\hat{\chi},i}+u_\infty)-\eta A^{-1} \e^2 \\
&\geq \sup_{B_{i,r}(z)}u_\infty+\inf_{B_{i,r}(z)} \phi_{\hat{\chi},i}-\eta A^{-1} \e^2.
\end{aligned}
\]
This shows that
\[
\begin{aligned}
\varphi_{i,r}(z)&=u_{\infty,i}^{(r)}(z)-\phi_{\hat{\chi},i}(z)-\e^2 \phi_{\omega,i}(z) \\
&\geq \sup_{B_{i,r}(z)}u_\infty+\inf_{B_{i,r}(z)} \phi_{\hat{\chi},i}-\phi_{\hat{\chi},i}(z)-\eta A^{-1} \e^2-\e^2 \phi_{\omega,i}(z) \\
&\geq \sup_{B_{i,r}(z)}u_\infty-Sr-\eta A^{-1} \e^2-2\e^2 R^2.
\end{aligned}
\]
The above computation yields that
\[
\varphi_{i,r}(z)-\varphi_{j,r}(z) \geq \sup_{B_{i,r}(z)}u_\infty-\sup_{B_{j,\frac{r}{2}}(z)} u_\infty-2Sr-(\eta+1)A^{-1}\e^2+\e^2 R^2.
\]
From the choice of the covering $\{B_{j,4R}\}_{j \in \cJ}$ one can easily see that $B_{j,\frac{r}{2}}(z) \subset B_{i,r}(z)$. Thus
\[
\varphi_{i,r}(z)-\varphi_{j,r}(z) \geq -2Sr-(\eta+1)A^{-1}\e^2+\e^2 R^2.
\]
By choosing $A=\frac{3(\eta+1)}{R^2}$ we have
\[
\varphi_{i,r}(z)-\varphi_{j,r}(z) \geq -2Sr+\frac{2 \e^2 R^2}{3}.
\]
So if we set $r_1=\frac{\e^2 R^2}{12S}$, then for any $r \in (0,r_1)$, we conclude that
\[
\varphi_{j,r}(z) \leq \varphi_{i,r}(z)-\frac{\e^2 R^2}{2}.
\]
\end{proof}

The above discussion prepares the ground for constructing a subsolution on $X$. We now proceed to the proof.

\begin{proof}[Proof of ``coercivity $\Rightarrow$ existence'' in Theorem \ref{existence and coercivity}]
From Lemma \ref{gluing condition}, we know that for any $r \in (0,r_1)$ the regularized maximum $\varphi$ of $\{(B_{j,3R},\varphi_{j,r})\}_{j \in \cJ}$ (for a sufficiently small vector $\eta$) is smooth on $\cX_1$. Define $\Omega:=\hat{\chi}_\cX+\dd \varphi$. Then by Proposition \ref{gluing of subsolutions}, $\Omega$ is a K\"ahler form on $\cX_1$ which satisfies
\[
R_{\omega_\cX}(\Omega)-\theta_v^X(\hat{\chi})-\Re(v+w)(\varphi)<c_X.
\]
Let
\[
\tilde{\varphi}:=\int_{\tau \in T'}\tau^\ast \varphi dT', \quad \tilde{\Omega}=\hat{\chi}_\cX+\dd \tilde{\varphi}.
\]
Since $\omega_\cX$, $\hat{\chi}_\cX$, $\theta_v^X(\hat{\chi})$ and $\Re(v+w)$ are $T'$-invariant, Proposition \ref{averaging of potentials and forms} and $\Re(w)(\tilde{\varphi})=0$ yield that
\[
R_{\omega_\cX}(\tilde{\Omega})-\theta_v^X(\hat{\chi})-\Re(v)(\tilde{\varphi})<c_X
\]
on $\cX_1$. Finally, we observe that the restriction of $\tilde{\Omega}$ to $X \simeq X \times \{1\} \subset \cX_1$:
\[
\chi:=\tilde{\Omega}|_X=\hat{\chi}+\dd \tilde{\varphi}|_X
\]
satisfies
\[
P_\omega(\chi)-\theta_v^X(\hat{\chi})-\Re(v)(\tilde{\varphi}|_X)<c_X,
\]
where we used Proposition \ref{restriction formulas} (2) and the fact that $\Re(v)$ is tangent to $X$. Hence, by Proposition \ref{averaging of subsolutions}, if we set
\[
\phi:=\int_{\tau \in T} \tau^\ast \tilde{\varphi}|_X dT, \quad \tilde{\chi}:=\hat{\chi}+\dd \phi,
\]
then the K\"ahler form $\tilde{\chi} \in \b$ satisfies
\[
P_\omega(\tilde{\chi})-\theta_v^X(\tilde{\chi})<c_X.
\]
The desired result follows from Theorem \ref{existence of solutions to the generalized equation}.
\end{proof}

\subsection{Existence implies coercivity}
We will prove the converse direction of Theorem \ref{existence and coercivity} and Theorem \ref{solvability is independent of omega}. As in Proposition \ref{perturbation of coercivity}, the terms involving the Hamiltonian cancel out when taking the difference of the functionals, so that, in effect, there is no substantial difference from the usual $J$-equation case \cite[Proposition 22]{CS17}. However, we will give it here for the sake of completeness.
\begin{proof}[Proof of ``existence $\Rightarrow$ coerxivity'' in Theorem \ref{existence and coercivity}]
Assume that there exists a solution $\hat{\chi} \in \b$ to the modified $J$-equation
\[
\Tr_{\hat{\chi}} \omega=c_X+\theta_v^X(\hat{\chi}).
\]
Take a sufficiently small $\d>0$ so that $\omega':=\omega-\d \hat{\chi}>0$. Then
\[
\Tr_{\hat{\chi}} \omega'=c_X-n\d+\theta_v^X(\hat{\chi}),
\]
\ie the K\"ahler form $\hat{\chi}$ solves the modified $J$-equation with respect to $\omega'$. In particular, we have $J_v^{\omega'}(\phi) \geq -C$ for all $\phi \in \cH(X,\hat{\chi})^T$ \cite[Corollary 3.2]{LS16}. For all $\phi \in \cH(X,\hat{\chi})^T$, we compute
\[
J_v^\omega(\phi)=J_v^{\omega'}(\phi)-n\d \int_0^1 \int_X \phi(\chi_{t\phi}^n-\hat{\chi} \wedge \chi_{t\phi}^{n-1})dt.
\]
Integrating by parts we get
\[
\begin{aligned}
-\int_X \phi (\chi_{t \phi}^n-\hat{\chi} \wedge \chi_{t \phi}^{n-1}) &=-\int_X \phi \bigg( t \dd \phi \bigg) \wedge \chi_{t \phi}^{n-1}\\
&=\frac{\i}{2 \pi}t \int_X \p \phi \wedge \bp \phi \wedge \chi_{t \phi}^{n-1} \\
&=\frac{\i}{2 \pi} \sum_{i=1}^{n-1} p_i(t) \int_X \p \phi \wedge \bp \phi \wedge \chi_\phi^i \wedge \hat{\chi}^{n-1-i},
\end{aligned}
\]
where $p_i(t):=\binom{n-1}{i} t^{i+1} (1-t)^{n-1-i}$. We take a constant $\kappa>0$ (depending only on $n$) sufficiently small so that
\[
\int_0^1 p_i(t) dt \geq \kappa, \quad i=1,\ldots n-1.
\]
Then we have
\[
\begin{aligned}
-n\d \int_0^1 \int_X \phi (\chi_{\phi_t}^n-\hat{\chi} \wedge \chi_{\phi_t}^{n-1}) dt&=\frac{\i}{2 \pi} n\d \sum_{i=1}^{n-1} \int_0^1 p_i(t) dt \int_X \p \phi \wedge \bp \phi \wedge \chi_\phi^i \wedge \hat{\chi}^{n-1-i} \\
&\geq \frac{\i}{2 \pi} n\d \kappa \int_X \p \phi \wedge \bp \phi \wedge \sum_{i=1}^{n-1} \chi_\phi^i \wedge \hat{\chi}^{n-1-i} \\
&=n\d \kappa I(\phi).
\end{aligned}
\]
It then follows that
\[
J_v^\omega(\phi) \geq n \d \kappa I(\phi)-C
\]
for all $\phi \in \cH(X,\hat{\chi})^T$.
\end{proof}
As a corollary of this we can show that the solvability of the $J$-equation does not depend on a choice of $\omega \in \a$.
\begin{proof}[Proof of Theorem \ref{solvability is independent of omega}]
Thanks to Theorem \ref{existence and coercivity}, it suffices to show that the coercivity of $J_v^\omega$ does not depend on the choice of $\omega$. If $\omega, \omega' \in \a$ are $T$-invariant K\"ahler forms there exists a $T$-invariant smooth function $\varphi_0$ such that $\omega=\omega'+\dd \varphi_0$. Then we compute
\[
\begin{aligned}
J_v^\omega(\phi)-J_v^{\omega'}(\phi)&=n \int_0^1 \int_X \phi \dd \varphi_0 \wedge \chi_{t \phi}^{n-1} dt \\
&=n \int_0^1 \int_X \varphi_0 \dd \phi \wedge \chi_{t \phi}^{n-1} dt \\
&=\int_0^1 \int_X \varphi_0 \frac{d \chi_{t \phi}^n}{dt} dt \\
&=\int_X \varphi_0(\chi_\phi^n-\hat{\chi}^n),
\end{aligned}
\]
and hence we have
\[
\big|J_v^\omega(\phi)-J_v^{\omega'}(\phi) \big| \leq \int_X |\varphi_0| \chi_\phi^n+C \leq C
\]
as desired.
\end{proof}

\section{A Nakai--Moishezon type criterion for smooth projective toric varieties} \label{A Nakai-Moishezon type criterion for smooth projective toric varieties}
\subsection{Local $C^2$-estimate for the generalized equation}
Let $X$ be a compact complex manifold, $T \subset \Aut_{\rm red}(X)$ a real torus, and let $\omega \in \a$ and $\hat{\chi} \in \b$ be $T$-invariant K\"ahler forms. Let $b \geq 0$ and $c>0$ be constants related by \eqref{relation of b and c}. In this subsection we establish the local $C^2$-estimate of the generalized equation for $\chi_\phi=\hat{\chi}+\dd \phi$ ($\phi \in \cH(X,\hat{\chi})^T$):
\begin{equation} \label{generalized modified J-equation r}
\Tr_{\chi_\phi} \omega+b \frac{\omega^n}{\chi_\phi^n}=c+\theta_v^X(\chi_\phi).
\end{equation}

\begin{lem} \label{local estimate for generalized equation}
Let $\phi \in \cH(X,\hat{\chi})^T$ be a solution to \eqref{generalized modified J-equation r}. Assume that there exists a K\"ahler form $\tilde{\chi}=\hat{\chi}+\dd \psi$ defined on the closure $\overline{\Omega}$ of an open subset $\Omega \subset X$ satisfying
\[
P_\omega(\tilde{\chi})-\theta_v^X(\hat{\chi})-\Re(v)(\psi)<c-2\d
\]
for some $\d>0$ (where $\Omega$, $\psi$ and $\tilde{\chi}$ do not have to be $T$-invariant). Then
\[
\Tr_\omega \chi_\phi \leq C e^{N(\phi-\inf_{\overline{\Omega}} \phi)}
\]
on $\overline{\Omega}$, where the constants $C>0$, $N>0$ depend on $\tilde{\chi}$, $\psi$, $\d$, $C_\theta$, an upper bound of $c$ and the maximum of $\Tr_\omega \chi_\phi$ on $\p \Omega$.
\end{lem}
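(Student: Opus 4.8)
The plan is to run a second-order maximum principle on the compact set $\overline{\Omega}$ for the quantity
\[
G:=\log\Tr_\omega\chi_\phi-N(\phi-\psi),
\]
with $N>0$ large, to be chosen; here the linearised operator $L:=\p_{ij}F_b(A)g^{i\bar{k}}\nabla_j\nabla_{\bar{k}}-\Re(v)$ (the spatial part of $\square_{L,b}$; $\nabla$ the Chern connection of $\omega$) plays the rôle of $\square_{L,b}$. The first step is a uniform positive lower bound on the eigenvalues $\lambda_1\leq\cdots\leq\lambda_n$ of $\chi_\phi$ with respect to $\omega$: since $b\geq 0$, equation \eqref{generalized modified J-equation r} gives $\sum_i\lambda_i^{-1}\leq f_b(\lambda)=c+\theta_v^X(\chi_\phi)\leq c+C_\theta$, hence $\lambda_i\geq(c+C_\theta)^{-1}$ for all $i$ and $b(\lambda_1\cdots\lambda_n)^{-1}\leq c+C_\theta$. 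With this lower bound in hand, one runs the second-order computation exactly as in the proof of Lemma \ref{Laplacian estimate} (which rests on that of Lemma \ref{Laplacian estimate when b is negative}), using the strong convexity of $F_b$ from Proposition \ref{convexity properties when b is non-negative} to absorb the third-order terms, the boundedness of the curvature of $\omega$ and of the derivatives of $\hat{\chi}$ and $v$ on the compact set $\overline{\Omega}$, and noting that equation \eqref{generalized modified J-equation r} now plays the role the flow equation did there, so the $v(\Delta_g\phi)$-terms again cancel; this produces
\[
L\log\Tr_\omega\chi_\phi\leq C_1
\]
on $\overline{\Omega}$, with $C_1$ depending only on an upper bound for $c$ and on fixed global data.

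Next I would compute $L(\phi-\psi)$. Since $\chi_\phi=\tilde{\chi}+\dd(\phi-\psi)$, in $\omega$-normal coordinates diagonalising $\chi_\phi$ one has $\nabla_i\nabla_{\bar{i}}(\phi-\psi)=\lambda_i-\tilde{h}_{i\bar{i}}$, where the $\tilde{h}_{i\bar{i}}$ are the diagonal entries of $\tilde{\chi}$; together with $\Re(v)(\phi)=\theta_v^X(\chi_\phi)-\theta_v^X(\hat{\chi})=f_b(\lambda)-c-\theta_v^X(\hat{\chi})$ (by \cite{Zhu00}, the $T$-invariance of $\phi$, and \eqref{generalized modified J-equation r}) this yields
\[
L(\phi-\psi)=\sum_i\p_i f_b(\lambda)\,(\lambda_i-\tilde{h}_{i\bar{i}})-f_b(\lambda)+c+\theta_v^X(\hat{\chi})+\Re(v)(\psi).
\]
Suppose $G$ attains its maximum over $\overline{\Omega}$ at an interior point $x_0\in\Omega$ (if the maximum is on $\p\Omega$ then $\Tr_\omega\chi_\phi(x_0)$ is already bounded and one passes directly to the last step). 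At $x_0$ we have $\nabla G=0$ and the complex Hessian of $G$ is $\leq 0$, so, since $\p_i f_b<0$, $LG(x_0)\geq 0$; hence $0\leq L\log\Tr_\omega\chi_\phi-N\,L(\phi-\psi)\leq C_1-N\,L(\phi-\psi)$, giving $L(\phi-\psi)\leq C_1/N$ at $x_0$. Substituting the identity above and rearranging,
\[
c+\theta_v^X(\hat{\chi})+\Re(v)(\psi)-\frac{C_1}{N}\leq f_b(\lambda)+\sum_i\p_i f_b(\lambda)\,(\tilde{h}_{i\bar{i}}-\lambda_i).
\]

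To bound $\lambda_n(x_0)$ I would argue by contradiction. If $\lambda_n\geq K(\tau)$ for a suitable $K(\tau)$ depending on $\tau$, an upper bound for $c$, the constant $b$, and an upper bound for $|\tilde{h}_{i\bar{i}}|$ on $\overline{\Omega}$, then the three estimates used in the proof of Lemma \ref{Laplacian estimate}---built from $\tilde{f}_b(\lambda_1,\dots,\lambda_{n-1})=\sum_{i=1}^{n-1}\lambda_i^{-1}$, the convexity of $\tilde{f}_b$, and the Schur--Horn theorem applied to $\tilde{\chi}$---give
\[
f_b(\lambda)+\sum_i\p_i f_b(\lambda)\,(\tilde{h}_{i\bar{i}}-\lambda_i)\leq P_\omega(\tilde{\chi})+3\tau<c+\theta_v^X(\hat{\chi})+\Re(v)(\psi)-2\d+3\tau,
\]
the last inequality being the subsolution hypothesis. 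Combining with the previous display forces $2\d-3\tau<C_1/N$; choosing $\tau:=\d/3$ and then $N:=2C_1/\d$ produces a contradiction. Hence $\lambda_n(x_0)<K(\d/3)$, and so $\Tr_\omega\chi_\phi(x_0)\leq nK(\d/3)$.

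Finally I would unwind the normalisation. In either case $\Tr_\omega\chi_\phi(x_0)\leq C_3:=\max\{nK(\d/3),\max_{\p\Omega}\Tr_\omega\chi_\phi\}$, so, using $\phi-\inf_{\overline{\Omega}}\phi\geq 0$,
\[
G(x_0)\leq\log C_3+N\big(\sup_{\overline{\Omega}}\psi-\inf_{\overline{\Omega}}\phi\big).
\]
Since $G\leq G(x_0)$ on $\overline{\Omega}$, this reads $\log\Tr_\omega\chi_\phi\leq\log C_3+N(\sup_{\overline{\Omega}}\psi-\psi)+N(\phi-\inf_{\overline{\Omega}}\phi)$, and bounding $\sup_{\overline{\Omega}}\psi-\psi$ by $\sup_{\overline{\Omega}}\psi-\inf_{\overline{\Omega}}\psi$ gives the claimed estimate with $N:=2C_1/\d$ and $C:=C_3\,e^{N(\sup_{\overline{\Omega}}\psi-\inf_{\overline{\Omega}}\psi)}$. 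Note that $\inf_{\overline{\Omega}}\phi$ cancels, so $C$ and $N$ depend only on $\tilde{\chi}$, $\psi$, $\d$, an upper bound for $c$, and $\max_{\p\Omega}\Tr_\omega\chi_\phi$, as required. The analytic core---the second-order estimate for $\log\Tr_\omega\chi_\phi$ and the $\tilde{f}_b$/Schur--Horn convexity trick---is the same as in the global situation of Section \ref{The modified J-flow}; I expect the only genuinely delicate points to be bookkeeping ones: using $\psi$ (rather than $\hat{\chi}$) as the reference potential in $G$ so that the subsolution inequality enters with no leftover $\psi_{i\bar{i}}$ terms, and making sure that no constant secretly depends on $\inf_{\overline{\Omega}}\phi$.
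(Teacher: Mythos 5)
Your proposal is correct and follows essentially the same route as the paper's proof: set $\varphi=\phi-\psi$, derive $\cL_b\log\Tr_\omega\chi_\phi\leq C_1$ from the stationary version of the flow computation (using the interior eigenvalue lower bound $\lambda_i\geq(c+C_\theta)^{-1}$ coming from \eqref{generalized modified J-equation r} with $b\geq 0$), apply the maximum principle to $G=\log\Tr_\omega\chi_\phi-N\varphi$, and feed the subsolution inequality for $\tilde\chi$ into the $\tilde f_b$/Schur--Horn argument to bound $\lambda_n$ at an interior maximum of $G$. The only difference is that you make explicit the final unwinding of the normalisation (converting the $\varphi$-bound into a $\phi$-bound using $\sup_{\overline\Omega}\psi-\inf_{\overline\Omega}\psi$), which the paper merely gestures at with ``by using a bound of $\psi$.''
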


As a concrete situation in which the above Lemma is applied, we consider $X$ to be toric and $\Omega$ an open subset of $X$ containing the union of all toric divisors. This Lemma is required for the inductive proof of Theorem \ref{subsolutions near subvarieties} on toric subvarieties.

\begin{proof}[Proof of Lemma \ref{local estimate for generalized equation}]
The proof is essentially the same as Lemma \ref{Laplacian estimate}. Set $\varphi:=\phi-\psi$ so that $\chi_\phi=\tilde{\chi}+\dd \varphi$. Define the operator $\cL_b$ by
\[
\cL_b:=\p_{ij} F_b(A) g^{i \bar{k}} \nabla_j \nabla_{\bar{k}}-\Re(v),
\]
where $A=g^{i \bar{k}} h_{j \bar{k}}$ and $\nabla$ denotes the Chern connection with respect to $\omega$. We take normal coordinates of $\omega$ on which $\chi_\phi$ is diagonal. From the equation \eqref{generalized modified J-equation r} we know that $\chi_\phi \geq C_1^{-1} \omega$ for some constant $C_1>0$ depending on $C_\theta$ and an upper bound of $c$. Then we have
\[
\cL_b \log \Tr_\omega \chi_\phi \leq C_2
\]
(indeed, we may substitute $\ddt \phi_t=0$ to the computation in Lemma \ref{Laplacian estimate} since $\phi$ is a stationary point of the modified $J$-flow). Similarly by using \eqref{generalized modified J-equation r} and $\varphi=\phi-\psi$ a direct computation shows that
\[
\cL_b \varphi=-F_b(A)+c+\theta_v^X(\hat{\chi})+\Re(v)(\psi)+\p_{ii} F_b(A)(h_{i\bar{i}}-\tilde{h}_{i\bar{i}}),
\]
where $\tilde{h}$ denotes the Riemannian metric corresponding to $\tilde{\chi}$. Define the function $G \colon \overline{\Omega} \to \R$ by $G:=\log \Tr_\omega \chi_\phi-N \varphi$. Then the above computation shows that
\[
\cL_b G \leq C_2+N F_b(A)-Nc-N \theta_v^X(\hat{\chi})-N \Re(v)(\psi)+N \p_{ii} F_b(A) (\tilde{h}_{i \bar{i}}-h_{i \bar{i}}).
\]
We take a large constant $N>0$ so that $C_2 \leq N \d$. If the function $G$ takes maximum at some point $x_0 \in \Omega$ then we have
\[
-\d \leq F_b(A)-c-\theta_v^X(\hat{\chi})-\Re(v)(\psi)+\p_{ii} F_b(A)(\tilde{h}_{i\bar{i}}-h_{i \bar{i}}),
\]
and hence
\[
P_\omega(\tilde{\chi})+\d \leq F_b(A)+\p_{ii} F_b(A)(\tilde{h}_{i\bar{i}}-h_{i \bar{i}})
\]
at $x_0$. Thus we can apply the same argument in the second half of the proof for Lemma \ref{generalized modified J-equation r} just by replacing $\hat{\chi}$ with $\tilde{\chi}$, and get
\[
\Tr_\omega \chi_\phi \leq C e^{N(\varphi-\inf_{\overline{\Omega}} \varphi)}.
\]
Finally, by using a bound of $\psi$, we obtain the desired estimate.
\end{proof}

\subsection{Continuity method and induction for toric subvarieties}
In what follows let $X$ be an $n$-dimensional smooth projective toric variety associated to a fan $\Sigma$, and let $T \subset T^\C \simeq (\C^\ast)^n$ denote the real torus and its complexification associated with $X$. We denote $N:=\Hom(\C^\ast,T^\C)$ the lattice of one-parameter subgroups of $T^\C$. The fan $\Sigma$ is then a collection of strictly convex rational polyhedral cones in $N_\R:=N \otimes_\Z \R$.

Let $v$ be a holomorphic vector field with $\Im(v) \in \ft$, and assume that K\"ahler forms $\omega \in \a$, $\hat{\chi} \in \b$ are $T$-invariant. For any $p$-dimensional toric subvarieties\footnote{In this paper the term ``$p$-dimensional subvariety'' means that it is reduced, but not necessarily irreducible or purely $p$-dimensional.} $Y \subset X$ we define the set $\Gamma_{\omega,\hat{\chi}}(Y)$ consisting of germs of $\hat{\chi}$-K\"ahler potential $\phi$ satisfying
\[
Q_{X,\omega}\bigg( \hat{\chi}+\dd \phi \bigg)-\theta_v^X(\hat{\chi})-\Re(v)(\phi)<c_X
\]
at $Y \subset X$. When $Y$ is irreducible, it is given as the $T^\C$-invariant subvariety associated with a cone $\tau \in \Sigma$
\[
Y=\overline{O(\tau)} \subset X,
\]
which is the closure of the $T^\C$-orbit corresponding to $\tau$. Let $N_\tau$ be the sublattice of $N$ spanned by the points in $N \cap \tau$ and define $N(\tau)$ by the exact sequence
\[
0 \to N_\tau \to N \to N(\tau) \to 0.
\]
Then the fan $\Sigma(\tau)$ corresponding to $Y$ is given by
\[
\Sigma(\tau)=\{ \overline{\s} \subset N(\tau)_\R| \tau \preceq \s \in \Sigma \},
\]
where $\overline{\s}$ denotes the image cone of $\s$ in $N(\tau)_\R:=N(\tau) \otimes_\Z \R$ under the quotient map $N_\R \to N(\tau)_\R$. In particular, the properties of a cone, rationality, smoothness and strict convexity are inherited by the toric subvariety $Y$. Also the real torus associated with $Y$ is then given by $T_Y:=N(\tau) \otimes_\Z \S^1$. Thus tensoring with $\S^1$ to the projection $N \to N(\tau)$, we obtain a surjective homomorphism $T \to T_Y$. The $T$-action on $Y$ is just obtained by composing this morphism with the $T_Y$-action on $Y$ (for the notation and a more detailed explanation, see \cite[Section 3.2]{CLS11}). Define
\[
\cH(Y,\hat{\chi})^{T_Y}:=\big\{ \phi \in C^\infty(Y;\R)^{T_Y} \big| \chi_\phi:=\hat{\chi}+\dd \phi>0 \big\}.
\]
Since $Y$ is invariant under the $T^\C$-action, $v$ is tangent to $Y$. In particular, restricting \eqref{definition and normalization of Hamiltonians} to $Y$, we find that the vector field $v|_Y$ has a Hamiltonian $\theta_v^Y(\hat{\chi})$ with respect to $\chi|_Y$ which differs from $\theta_v^X(\hat{\chi})$ as
\[
\theta_v^Y(\hat{\chi})=\theta_v^X(\hat{\chi})-I_Y,
\]
where
\[
I_Y:=\frac{1}{\b^p} \int_Y \theta_v^X(\hat{\chi}) \hat{\chi}^p,
\]
and we often omit the notation for restriction to $Y$. Since $\theta_v^Y(\chi_\phi)=\theta_v^Y(\hat{\chi})+v(\phi)$ we have
\begin{equation} \label{change of normalization}
\theta_v^Y(\chi_\phi)=\theta_v^X(\hat{\chi})+v(\phi)-I_Y
\end{equation}
for all $\phi \in \cH(Y,\hat{\chi})^{T_Y}$.

\begin{thm} \label{subsolutions near subvarieties}
Let $X$, $T$, $v$, $\omega$, $\hat{\chi}$, $\a$, $\b$ as above. Assume $m_X>0$ and there exists a $T$-invariant K\"ahler form $\chi \in \b$ such that
\[
\int_Y \big( (c_X+\theta_v^X(\chi)) \chi^p-p \omega \wedge \chi^{p-1} \big)>0
\]
for all $p$-dimensional irreducible toric subvarieties $Y \subset X$ ($p=1,\ldots,n-1$). Then for any $p$-dimensional toric subvarieties $Y \subset X$ ($p=0,1,\ldots,n-1$) we have
\[
\Gamma_{\omega,\hat{\chi}}(Y) \neq \emptyset.
\]
\end{thm}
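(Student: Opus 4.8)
The plan is to argue by induction on $p=\dim Y$. Since $\Gamma_{\omega,\hat{\chi}}$ depends only on the germ along $Y$ and regularized maxima preserve its defining inequality (Proposition \ref{gluing of subsolutions}(3)), it suffices to treat irreducible $Y$, the reducible case following by patching the germs obtained near the irreducible components. An irreducible $T$-invariant subvariety of a smooth toric manifold is an orbit closure, hence itself a smooth compact toric manifold of dimension $p$, carrying the quotient torus $T_Y$, the restricted classes $\a|_Y$, $\b|_Y$, and the descended vector field $v$. For the base case $p=0$, $Y$ is a finite set of $T$-fixed points; at such a point both $\Im(v)$ and $\Re(v)$ vanish, so choosing on a small ball of radius $\rho=A^{-3/4}$ a local potential $\phi$ with $\hat{\chi}+\dd\phi\sim A\omega$ makes the left-hand side of the defining inequality equal to $Q_{X,\omega}(\hat{\chi}+\dd\phi)-\theta_v^X(\hat{\chi})+O(A\rho^2)$, which is $<c_X$ once $Q_{X,\omega}$ is below $m_X=c_X+\min_X\theta_v^X>0$.

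For the inductive step the point is that the hypotheses descend to $Y$ after the Hamiltonian normalization shift. Put $c_Y:=c_X+I_Y$ with $I_Y=\b^{-p}\int_Y\theta_v^X(\hat{\chi})\hat{\chi}^p$, and let $b_Y$ be determined from $c_Y$, $\a|_Y$, $\b|_Y$ by the analogue of \eqref{relation of b and c} on $Y$. Using $\theta_v^X(\chi)|_Y=\theta_v^Y(\chi)+I_Y$, \eqref{change of normalization} and $\int_Y\theta_v^Y(\chi)\chi^p=0$ one checks: (i) $m_Y:=c_Y+\min_Y\theta_v^Y=c_X+\min_Y(\theta_v^X|_Y)\geq m_X>0$; (ii) the hypothesis $\int_Y((c_X+\theta_v^X(\chi))\chi^p-p\omega\wedge\chi^{p-1})>0$ is equivalent to $b_Y>0$; and (iii) for every toric subvariety $Z\subsetneq Y$ of dimension $p'<p$ the hypothesis for $Z$ in $X$ rewrites as $\int_Z((c_Y+\theta_v^Y(\chi))\chi^{p'}-p'\omega\wedge\chi^{p'-1})>0$. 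In particular the inductive hypothesis (this theorem in dimension $<p$), applied with the assumptions on $X$ that are already in force, gives $\Gamma_{\omega,\hat{\chi}}(Z)\neq\emptyset$ for every toric $Z\subset X$ with $\dim Z<p$.

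I would then solve the generalized equation \eqref{generalized modified J-equation r} on $Y$ with parameters $(c_Y,b_Y)$, $b_Y>0$, by the continuity method along a standard path joining the equation solved by $\hat{\chi}|_Y$ to the target. Openness follows from the strong convexity of $F_{b_Y}$ (Proposition \ref{convexity properties when b is non-negative}) and the maximum principle; closedness reduces to a priori estimates: the $C^0$-estimate follows from a Laplacian bound of the form \eqref{trace is bounded by osc} by the arguments of \cite{CS17} (it does not feel the Hamiltonian term), and higher-order estimates come from Evans--Krylov \cite{Kry76,Kry82}, Schauder and bootstrapping. The heart of the matter is the second-order estimate, which I would extract from Lemma \ref{local estimate for generalized equation} on $Y$; this requires, near every point of $Y$, a local K\"ahler form $\tilde{\chi}=\hat{\chi}|_Y+\dd\psi$ with $P_{\omega|_Y}(\tilde{\chi})-\theta_v^Y(\hat{\chi}|_Y)-\Re(v)(\psi)<c_Y-2\d$, produced by two mechanisms. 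Near the toric divisor $D_Y\subset Y$: each stratum is a toric subvariety $Z\subset X$ of dimension $<p$, and the inductive hypothesis furnishes $\phi_Z$ near $Z$ in $X$ with $Q_{X,\omega}(\hat{\chi}+\dd\phi_Z)-\theta_v^X(\hat{\chi})-\Re(v)(\phi_Z)<c_X$; restricting to $Y$ and using that $P_{\omega|_Y}\leq Q_{Y,\omega}$ pointwise while $Q_{Y,\omega}\leq Q_{X,\omega}$ by Proposition \ref{property for restriction}, together with $\theta_v^X(\hat{\chi})|_Y=\theta_v^Y(\hat{\chi}|_Y)+I_Y$ and $c_Y=c_X+I_Y$, yields the desired local subsolution on $Y$ near $D_Y$, with room to spare after shrinking. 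On the open orbit $Y\setminus D_Y\cong(\C^\ast)^p$: the $T_Y$-invariant forms and the equation become statements about convex functions on $\R^p$ and their Legendre transforms, and one obtains the interior $C^2$-estimate directly from this real convex-analytic description. Dovetailing these two estimates, and coping with the fact that the coordinate charts are not adapted to the $T$-action, is the step I expect to be the main obstacle.

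Finally, let $\chi_Y\in\b|_Y$ be the resulting solution on $Y$. Since $b_Y>0$ and $Y$ is compact, $\omega^p/\chi_Y^p$ is bounded below by a positive constant, so $Q_{Y,\omega}(\chi_Y)=c_Y+\theta_v^Y(\chi_Y)-b_Y\,\omega^p/\chi_Y^p\leq c_Y+\theta_v^Y(\chi_Y)-2\d''$ for some $\d''>0$. Writing $\chi_Y=\hat{\chi}|_Y+\dd\varphi_Y$, I would extend $\varphi_Y$ to a tubular neighbourhood $U$ of $Y$ in $X$ and set $\phi:=\varphi_Y^{\mathrm{ext}}+M\r$, where $\r\geq 0$ vanishes on $Y$, is strictly plurisubharmonic in the normal directions, and $M$ is large. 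Then $\hat{\chi}+\dd\phi>0$ near $Y$; with respect to $\omega$ the $n-p$ normal eigenvalues are $\geq M\cdot\mathrm{const}$ while the tangential part converges to $\chi_Y$ as $U$ shrinks, so $Q_{X,\omega}(\hat{\chi}+\dd\phi)\leq Q_{Y,\omega}(\chi_Y)+O(1/M)+O(\mathrm{diam}\,U)$ near $Y$. Since $Y$ is $T^\C$-invariant, $\Re(v)$ is tangent to $Y$ along $Y$, hence $\Re(v)(\phi)|_Y=\Re(v)(\varphi_Y)$; combining with $\theta_v^X(\hat{\chi})|_Y=\theta_v^Y(\hat{\chi}|_Y)+I_Y$, $c_Y=c_X+I_Y$ and the gap $\d''$ above, and then enlarging $M$ and shrinking $U$, gives $Q_{X,\omega}(\hat{\chi}+\dd\phi)-\theta_v^X(\hat{\chi})-\Re(v)(\phi)<c_X$ on a neighbourhood of $Y$. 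Thus $\phi\in\Gamma_{\omega,\hat{\chi}}(Y)$, completing the induction. (Alternatively, one can avoid solving on $Y$: the same restriction argument produces local $Q$-subsolutions near $D_Y$, local $Q$-subsolutions on $Y\setminus D_Y$ come from the large-Hessian construction corrected by a pluriharmonic term to control $\Re(v)(\psi)$, and gluing these via Proposition \ref{gluing of subsolutions} and averaging over $T_Y$ via Proposition \ref{averaging of subsolutions} gives a global $T_Y$-invariant $Q$-subsolution on $Y$ whose strict defining inequality has, by compactness of $Y$, a uniform gap—again enough to run the normal extension above.)
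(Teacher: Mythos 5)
Your main line of argument follows the paper's proof closely: you induct on $\dim Y$; the base case at fixed points uses $m_X>0$ and vanishing of $\Re(v)$; the inductive step solves a generalized equation of type \eqref{generalized modified J-equation r} on each irreducible $p$-dimensional orbit closure $Y$ with the shifted constant $c_Y=c_X+I_Y$ and $b_Y>0$ (equivalent to the integral hypothesis), via a continuity method; interior $C^2$ comes from the Legendre-transform picture on the open orbit and boundary $C^2$ comes from Lemma \ref{local estimate for generalized equation} fed by the inductively obtained subsolutions near the toric divisors of $Y$; the strict gap from $b_Y>0$ and $\Re(v)$-tangency to $Y$ allow the normal extension $\varphi_Y^{\mathrm{ext}}+Md_Y^2$. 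All of that matches the paper.

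The gap is in your very first reduction: ``it suffices to treat irreducible $Y$, the reducible case following by patching the germs obtained near the irreducible components.'' This is not a routine patch, and the reducible case is genuinely load-bearing: the inductive hypothesis is invoked for $Z:=Y\setminus\bigcup_k Y_k'$, a union of lower-dimensional strata, and the final application is to the full (highly reducible) toric boundary. If $Y_1$ and $Y_2$ are two $p$-dimensional components with $Y_1\cap Y_2\neq\emptyset$, the germs you produce near $Y_1$ and near $Y_2$ are each defined on full tubular neighborhoods, and there is no reason the boundary-crossing hypothesis of Proposition \ref{gluing of subsolutions} holds: you would need $\phi_1<\phi_2$ deep inside a neighborhood of $Y_1\cap Y_2$ and simultaneously $\phi_1>\phi_2$ near the outer boundary of that neighborhood, which cannot be arranged for bounded germs merely by adding constants. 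The paper's fix is precisely the step you skipped: after solving on $Y_k$, replace $\phi_k$ by $\psi_k:=\phi_k+\kappa\sum_i\log|s_i|^2_{h_i}$ so that $\psi_k\to-\infty$ along the toric divisors $D_{Y_k}\supset Y_i\cap Y_k$ (at the small cost of a $\delta$ in the strict inequality), form the normal extension $\tilde\psi_k:=\psi_k+Bd_{Y_k}^2$ only away from a neighborhood $W$ of $Z$, and glue it with the inductively obtained $\phi_Z$ near $Z$, shifted by a large negative constant; the blow-down of $\psi_k$ near $Z$ is exactly what makes the two crossing inequalities in the regularized-maximum construction hold. Your parenthetical alternative (gluing interior and divisor-side $Q$-subsolutions directly on $Y$, then averaging over $T_Y$) runs into the same obstruction, amplified by the noncompactness of $Y\setminus D_Y$: without solving a global PDE on $Y$ there is no control of the relative values of the local potentials that the gluing requires.
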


\begin{proof}
We will prove by induction of $p=\dim Y$.

\vspace{2mm}

\noindent
\textbf{Step 0.} First, we note that if $\dim Y=0$, \ie $Y$ is a finite set of points, then the result easily follows from $m_X=c_X+\min_X \theta_v^X(\hat{\chi})>0$, and the facts that $\Re(v)$ vanishes at $Y$ and every K\"ahler class in a sufficiently small neighborhood of a point is trivial.

\vspace{2mm}

\noindent
\textbf{Step 1.} Assume that $p \geq 1$ and the statement is true for all toric subvarieties $Z \subset X$ of $\dim Z \leq p-1$. For any $p$-dimensional irreducible toric subvariety $Y \subset X$ let us consider the following continuity path $\chi_{\phi_t}=\hat{\chi}+\dd \phi_t \in \b|_Y$ ($t \in [0,\infty)$) with $\phi_t \in \cH(Y,\hat{\chi})^{T_Y}$:
\begin{equation} \label{continuity path on Y}
\Tr_{\chi_{\phi_t}} \omega+b_t \frac{\omega^p}{\chi_{\phi_t}^p}=c_t+\theta_v^Y(\chi_{\phi_t}),
\end{equation}
where
\[
c_t:=c_X(t+1)+I_Y \geq c_X+I_Y \geq c_X+\min_X \theta_v^X(\hat{\chi})=m_X>0,
\]
\[
b_0:=\frac{(c_X+I_Y) \b^p-p\a \cdot \b^{p-1}}{\a^p}>0,
\]
\[
b_t:=\frac{c_t \b^p-p \a \cdot \b^{p-1}}{\a^p}=b_0+c_X \frac{\b^p}{\a^p}t>0
\]
from the assumption. We note that
\[
\begin{aligned}
c_t+\min_Y \theta_v^Y(\chi_{\phi_t})&=c_t+\min_Y \theta_v^Y(\hat{\chi}) \\
&\geq c_X+I_Y+\min_Y(\theta_v^X(\hat{\chi})-I_Y) \\
&\geq c_X+\min_X \theta_v^X(\hat{\chi}) \\
&=m_X>0,
\end{aligned}
\]
which will be required to obtain the interior $C^2$-estimates (see Section \ref{Interior C2-estimates}). Set
\[
\cT:=\{t \in [0,\infty)|\text{\eqref{continuity path on Y} admits a solution $\phi_t \in \cH(Y,\hat{\chi})^{T_Y}$}\}.
\]
By Theorem \ref{existence of solutions to the generalized equation} the solvability of \eqref{continuity path on Y} is equivalent to the existence of $\underline{\phi} \in \cH(Y,\hat{\chi})^{T_Y}$ such that
\begin{equation} \label{solvability of the continuity path on Y}
P_\omega(\chi_{\underline{\phi}})-\theta_v^Y(\chi_{\underline{\phi}})<c_t.
\end{equation}
From the uniform bound of $\theta_v^Y(\chi_{\underline{\phi}})$, we know that any $\underline{\phi} \in \cH(Y,\hat{\chi})^{T_Y}$ satisfies \eqref{solvability of the continuity path on Y} for sufficiently large $t$. It then follows that $\cT$ is open, and there exists a large constant $T>0$ such that $[T,\infty) \subset \cT$.

Now we will show that $\cT$ is closed. Let $t_0:=\inf \cT$ and $D=\cup_i D_i$ be the union of all toric divisors of $Y$. From the inductive hypothesis, there exists an open neighborhood $U_D$ of $D$ in $X$, and a $\hat{\chi}$-K\"ahler potential $\phi_D$ on $\overline{U_D}$ such that
\[
Q_{X,\omega} \bigg( \hat{\chi}+\dd \phi_D \bigg)-\theta_v^X(\hat{\chi})-\Re(v)(\phi_D)<c_X-\d
\]
for some $\d>0$. By Proposition \eqref{restriction formulas} (1) and \eqref{change of normalization}, we have
\[
Q_{Y,\omega} \bigg( \hat{\chi}+\dd \phi_D \bigg)-\theta_v^Y(\hat{\chi})-\Re(v)(\phi_D)<c_X+I_Y-\d \leq c_{t_0}-\d
\]
on the closure $\overline{U_Y}$ of $U_Y:=U_D \cap Y$ in $Y$, since $\Re(v)$ is tangent to $Y$. For $t>t_0$ close to $t_0$, we invoke Lemma \ref{local estimate for generalized equation} to reduce the $C^2$-estimates of $\phi_t$ on $\overline{U_Y}$ to estimates on the boundary $\p U_Y$. However, by Proposition \ref{C2 estimate for g} we obtain bounds of the form
\[
\Tr_\omega \chi_{\phi_t} \leq C e^{N(\phi_t-\inf_{Y \backslash U_Y} \phi_t)}
\]
on $Y \backslash U_Y$. Eventually, we obtain the same inequality globally on $Y$. Thus we can use the argument \cite{SW08,Wei04,Wei06}, the Evans--Klyrov theorem and the Schauder estimates to obtain higher order estimates. This shows that $\cT$ is closed, and hence $\cT=[0,\infty)$.

\vspace{2mm}

\noindent
\textbf{Step 2.}
By Step 1 we have obtained $\phi \in \cH(Y,\hat{\chi})^{T_Y}$ satisfying
\[
Q_{Y,\omega}\bigg( \hat{\chi}+\dd \phi \bigg)-\theta_v^X(\hat{\chi})-\Re(v)(\phi) \leq c_X-2\d
\]
as a solution to \eqref{continuity path on Y} for $t=0$ on $Y$, where the existence of a positive constant $\d>0$ follows from $b_0>0$. For a projective toric variety $X$, the rational, complete fan $\Sigma$ and lattice $N$ allow a canonical $T_Y$-equivariant linearization on the line bundle $\cO_Y(D_i)$ \cite[Section 6]{CLS11}\footnote{In general, for a smooth projective variety $X$, a real torus $T \subset \Aut_{\rm red}(X)$ and a line bundle $L$, the bundle $L$ admits a $T$-equivariant linerization (\cf \cite[Chapter III, Proposition 9.5]{Kob95}). However, when $X$ is not projective, the existence of a $T$-equivariant linearization on $\cO_X(D)$ is obstructed even if the divisor $D \subset X$ is $T$-invariant \cite[Theorem 2]{KS24}.}. We take a defining section $s_i$ of $D_i$ and $T_Y$-invariant fiber metric $h_i$ on $\cO_Y(D_i)$. Then the curvature $\xi_i$ of $h_i$ has an expression
\begin{equation} \label{inner product of xi with v}
\xi_i=-\dd \log |s_i|_{h_i}^2
\end{equation}
on $Y \backslash D_i$. In particular we have a uniform bound
\[
-C \omega \leq \dd \log |s_i|_{h_i}^2 \leq C \omega
\]
on $Y \backslash D_i$. For any $t \in T_Y$, both $s_i$ and $t \cdot s_i$ are defining sections of $D_i$, which implies that $t \cdot s_i=\chi(t) s_i$ for some character $\chi \colon T_Y \to \C^\ast$. Since $T_Y$ is compact, the image of $\chi$ must be a compact subgroup of $\C^\ast$, which shows that $|\chi(t)|=1$ for all $t \in T_Y$. In particular, we know that $|s_i|_h^2$ is $T_Y$-invariant and $\Im(v) \log |s_i|_h^2=0$ on $Y \backslash D_i$. Applying $i_v$ to \eqref{inner product of xi with v} we get
\begin{equation}
i_v \xi_i=-\frac{\i}{2 \pi} \bp \big( \Re(v)(\log |s_i|_{h_i}^2) \big),
\end{equation}
and hence
\[
\big| \Re(v)(\log |s_i|_{h_i}^2) \big| \leq C
\]
on $Y \backslash D_i$. Thus for sufficiently small $\kappa>0$ the function
\[
\psi:=\phi+\kappa \sum_i \log |s_i|_{h_i}^2
\]
satisfies
\[
Q_{Y,\omega}\bigg( \hat{\chi}+\dd \psi \bigg)-\theta_v^X(\hat{\chi})-\Re(v)(\psi) \leq c_X-\d
\]
on $Y \backslash D$.

Now let $Y \subset X$ be a $p$-dimensional toric subvariety, $Y_k$ ($k=1,\ldots,\ell$) its $p$-dimensional irreducible components, and $Y_k' \subset Y_k$ the complement of toric divisors in $Y_k$. By the induction hypothesis applied to $Z:=Y \backslash \cup_k Y_k'$, there exists a neighborhood $U$ of $Z$ in $X$ and a $\hat{\chi}$-K\"ahler potential $\phi_Z$ such that
\[
Q_{X,\omega} \bigg( \hat{\chi}+\dd \phi_Z \bigg)-\theta_v^X(\hat{\chi})-\Re(v)(\phi_Z)<c_X
\]
on $U$. On the other hand, applying the above construction to each $Y_k$, we obtain $\psi_k$ such that
\[
Q_{Y_k,\omega}\bigg( \hat{\chi}+\dd \psi_k \bigg)-\theta_v^X(\hat{\chi})-\Re(v)(\psi_k) \leq c_X-\d
\]
on $Y_k'$ and $\psi_k \to -\infty$ along the toric divisors of $Y_k$. For simplicity, we assume that $\ell=1$ (the general case follows by applying the same argument to each $Y_k$). We can choose open neighborhoods $W \Subset V \Subset U$ of $Z$ in $X$ such that
\begin{enumerate}
\item  $\psi_1>\phi_Z+2$ in $Y_1 \cap (U \backslash V)$
\item $\psi_1<\phi_Z-2$ in $Y_1 \cap W$
\end{enumerate}
by subtracting a sufficiently large constant from $\phi_Z$, since $\psi_1 \to -\infty$ along the toric divisors of $Y_1$. We take a smooth extension of $\psi_1$ near $Y_1 \backslash W$ and still denote it by $\psi_1$. Consider
\[
\tilde{\psi}_1:=\psi_1+B d_{Y_1}^2,
\]
where $B>0$ is a constant and $d_{Y_1}$ is the distance function from $Y_1$. By taking $B$ sufficiently large, we have
\[
Q_{X,\omega} \bigg( \hat{\chi}+\dd \tilde{\psi}_1 \bigg)-\theta_v^X(\hat{\chi})-\Re(v)(\tilde{\psi}_1)<c_X
\]
in an open neighborhood $\tilde{U}$ of $Y_1 \backslash W$ in $X$, since $\Re(v)(d_{Y_1}^2)=0$ along $Y_1$. By continuity of $\psi_1$, after possibly shrinking $\tilde{U}$, we also have
\[
\tilde{\psi}_1>\phi_Z+1
\]
in $\tilde{U} \cap (U \backslash V)$ and
\[
\tilde{\psi}_1<\phi_Z-1
\]
in $\tilde{U} \cap W$. Let $\varphi$ be the regularized maximum of $(\tilde{U},\tilde{\psi}_1)$ and $(V,\phi_Z)$. From the above observations, we conclude that
\[
Q_{X,\omega} \bigg( \hat{\chi}+\dd \varphi \bigg)-\theta_v^X(\hat{\chi})-\Re(v)(\varphi)<c_X
\]
on a small neighborhood of $Y$ in $X$. This completes the proof.
\end{proof}

\begin{proof}[Proof of Theorem \ref{NM criterion for smooth projective toric varieties}]
In Step 1 of Theorem \ref{subsolutions near subvarieties}, let $Y=X$ and $D$ the union of all toric divisors of $X$. Then by Theorem \ref{subsolutions near subvarieties}, we know that $\Gamma_{\omega,\hat{\chi}}(D) \neq \emptyset$. Hence by proceeding with the same argument, we see that the continuity path
\[
\Tr_{\chi_{\phi_t}} \omega+b_t \frac{\omega^n}{\chi_{\phi_t}^n}=c_t+\theta_v^X(\chi_{\phi_t}), \quad b_t=c_X \frac{\b^n}{\a^n}t, \quad c_t=c_X(t+1)
\]
for $\chi_{\phi_t}=\hat{\chi}+\dd \phi_t \in \b$ ($\phi_t \in \cH(X,\hat{\chi})^T$) has a solution for all $t \in [0,\infty)$. This completes the proof.
\end{proof}

\subsection{Interior $C^2$-estimates} \label{Interior C2-estimates}
The goal of this subsection is to prove the interior $C^2$-estimates required in Step 1 of Theorem \ref{subsolutions near subvarieties}. Let $X$ be an $n$-dimensional smooth projective toric variety. We identity the open dense $T$-orbit in $X$ with $\R^n \times (\S^1)^n$ and express $\omega$, $\chi$ as
\[
\omega=\dd f, \quad \chi=\dd g
\]
for some smooth strictly convex function $f, g \colon \R^n \to \R$. Assume that $f$, $g$ satisfy the equation
\begin{equation} \label{generalized modified J-equation on principal T-orbit}
g^{ij}f_{ij}+b \frac{\det(D^2 f)}{\det(D^2 g)}=c+\ell(\nabla g)-\frac{1}{V} \int_P \ell dy,
\end{equation}
where $b \geq 0$ and $c>0$ are constants, $V:=\Vol(P)$, $\ell \colon \R^n \to \R$ is a linear function, and $P=\nabla g (\R^n)$ is the moment polytope determined uniquely from the K\"ahler class $\b$ up to parallel translations. Also we assume that 
\begin{equation} \label{positive lower bound delta}
c+\inf_P \ell-\frac{1}{V} \int_P \ell dy \geq \d
\end{equation}
for some constant $\d>0$. Adding an affine linear function to $g$ amounts to a parallel translation of the moment polytope $P$. Since the equation \eqref{generalized modified J-equation on principal T-orbit} and the condition \eqref{positive lower bound delta} remain unchanged under this operation, we may assume that $g(0)=0$ and $\nabla g(0)=0$. In particular, since $0 \in P$ we obtain:

\begin{lem}[\cite{CS17}, Lemma 26]
For any compact subset $K \subset \R^n$ there exists $C>0$ such that $\sup_K |g|<C$.
\end{lem}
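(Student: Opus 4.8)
The plan is to observe that $g$ is not merely continuous but in fact globally Lipschitz on $\R^n$, with a Lipschitz constant that depends only on the K\"ahler class $\b$ (equivalently, on the moment polytope $P$) and not on the particular solution $g$; the stated bound on a compact set $K$ then follows at once from the normalization $g(0)=0$.

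First I would recall the standard Guillemin--Abreu description of toric K\"ahler potentials. Since $\chi=\dd g$ is the $T$-invariant K\"ahler form in the class $\b$ on the compact toric manifold $X$, restricted to the open dense orbit $\R^n\times(\S^1)^n$, the gradient map $\nabla g$ is, after the identification of the orbit with $\R^n\times(\S^1)^n$, the moment map of $(X,\chi)$. Consequently $\nabla g$ maps $\R^n$ diffeomorphically onto $\inte(P)$, where $P$ is the moment polytope attached to $\b$ — here normalized, compatibly with $\nabla g(0)=0$, so that $0\in\inte(P)$. In particular $\nabla g(x)\in P$ for every $x\in\R^n$, and since $P$ is a bounded polytope we may set $M:=\sup_{y\in P}\abs{y}<\infty$, a constant depending only on $\b$.

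From $\abs{\nabla g(x)}\leq M$ for all $x\in\R^n$ we conclude that $g$ is $M$-Lipschitz on $\R^n$; combined with $g(0)=0$ this yields $\abs{g(x)}\leq M\abs{x}$ for all $x$. Hence for any compact $K\subset\R^n$ we obtain $\sup_K\abs{g}\leq M\sup_{x\in K}\abs{x}=:C<\infty$, which proves the lemma, and the constant $C$ depends only on $K$ and $\b$, so the estimate is in fact uniform over all solutions $g$ of \eqref{generalized modified J-equation on principal T-orbit}.

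There is no serious obstacle here; the only point that must be invoked correctly is the inclusion $\nabla g(\R^n)\subseteq P$, i.e.\ that the gradient of a toric K\"ahler potential stays inside the moment polytope, which is part of the standard dictionary for toric K\"ahler metrics and uses neither the equation \eqref{generalized modified J-equation on principal T-orbit} nor the hypothesis \eqref{positive lower bound delta}. (For completeness, convexity of $g$ together with $\nabla g(0)=0$ also shows $g\geq 0$, so that $\sup_K\abs{g}=\sup_K g$; this refinement is not needed.)
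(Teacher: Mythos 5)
Your proof is correct and is exactly the argument the paper leaves implicit: since $\nabla g(\R^n)=P$ is bounded and the normalization gives $g(0)=0$, the function $g$ is Lipschitz with constant $\sup_{y\in P}|y|$ and hence bounded on compact sets. This matches the paper's one-line remark ``In particular since $0\in P$ we obtain:'' preceding the lemma.
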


Let $h \colon P \to \R$ be the Legendre transform of $g$. Then we observe that
\begin{equation} \label{Legendre transform}
\nabla h(y)=(\nabla g)^{-1}(y), \quad D^2 h(y)=\big( D^2 g(\nabla h(y)) \big)^{-1}.
\end{equation}
By using \eqref{Legendre transform} we can translate the equation \eqref{generalized modified J-equation on principal T-orbit} to
\begin{equation} \label{generalized modified J-equation for Legendre dual}
f_{ij}(\nabla h(y))h_{ij}+b \det(D^2 f(\nabla h(y))) \det (D^2 h(y))=c+\ell(y)-\frac{1}{V} \int_P \ell dy.
\end{equation}
Our strategy is essentially the same as \cite[Section 5.1]{CS17}, namely, we establish higher order estimates for the Legendre transform $h$. If there is a sequence of equations \eqref{generalized modified J-equation on principal T-orbit} that violates the $C^2$ bound, we take the limit $k \to \infty$ and apply Bian--Guan's constant rank theorem \cite[Theorem 1.1]{BG09} to obtain a contradiction.

\begin{prop} \label{C2 estimate for g}
Let $B \subset \R^n$ be the unit ball, and $f$, $g$ satisfy \eqref{generalized modified J-equation on principal T-orbit} with normalization $\inf_B g=g(0)=0$. Then there exists $C>0$ depending only on $\sup_B |g|$, bounds on $b$, $c$, the coefficients of $\ell$, $C^{3,\g}$ bounds on $f$ and a positive lower bound on the Hessian of $f$ such that
\[
\sup_{\frac{1}{2}B} |g_{ij}|<C.
\]
\end{prop}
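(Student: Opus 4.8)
The plan is to argue by contradiction via a blow-up (compactness) argument, following the scheme of \cite[Section 5.1]{CS17}, with the constant rank theorem of Bian--Guan as the key input. First I would isolate the easy half of the estimate. From \eqref{generalized modified J-equation on principal T-orbit}, the lower bound on $D^2 f$, and the fact that $|\nabla g|$ is bounded on $\tfrac34 B$ by convexity and $\sup_B|g|<C$, the right-hand side of \eqref{generalized modified J-equation on principal T-orbit} is bounded; since $g^{ij}f_{ij}\ge c_0\sum_i 1/\mu_i$ for $\mu_i$ the eigenvalues of $D^2 g$, this gives a uniform lower bound $D^2 g\ge c_1 I$ on $\tfrac34 B$, equivalently $D^2 h\le c_1^{-1}I$ for the Legendre transform $h$ via \eqref{Legendre transform}; and \eqref{lower bound delta on moment polytope} forces at least one eigenvalue of $D^2 g$ to stay bounded above at each point, i.e.\ $D^2 h$ has a uniformly positive eigenvalue everywhere. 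The remaining content is an upper bound for $D^2 g$, i.e.\ a uniform positive lower bound for $\det D^2 h$ on $\tfrac12 B$.

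Suppose this fails: pick $(f_k,g_k)$ satisfying \eqref{generalized modified J-equation on principal T-orbit} with uniform bounds on all the quantities in the statement but $M_k:=\sup_{\frac12 B}|D^2 g_k|\to\infty$, attained at $x_k$, and set $y_k=\nabla g_k(x_k)$. I would rescale $g_k$ about $x_k$ (keeping the Hessian scale fixed, so that the domain of the dilated problem exhausts $\R^n$) and pass to the Legendre transforms $h_k$. The crucial point is that the transformed equation \eqref{generalized modified J-equation for Legendre dual} is \emph{uniformly elliptic}, with ellipticity constants coming from the two-sided Hessian bounds on $f$ rather than of $g$, and with coefficients $f_{k,ij}(\nabla h_k)$ that are uniformly H\"older (since $\nabla h_k$ is uniformly Lipschitz by the previous step and $f_k$ is uniformly $C^{3,\g}$); hence Evans--Krylov and Schauder estimates plus bootstrapping give uniform $C^{3,\g'}_{\loc}$ bounds on $h_k$. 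Passing to a subsequence, $h_k\to h_\infty$ in $C^{3,\g''}_{\loc}$, where $h_\infty$ is an entire convex solution on $\R^n$ of a constant-coefficient equation $F_{ij}(h_\infty)_{ij}+\bar b\det F\det D^2 h_\infty=\psi_0$ with $F$ positive definite, $\bar b\ge0$ and $\psi_0\in[\d,C]$; moreover $\det D^2 h_\infty=0$ at the base point (because $\la_{\min}(D^2 h_k)(y_k)=1/M_k\to0$), while $D^2 h_\infty$ never vanishes (the uniformly positive eigenvalue of $D^2 h$ survives in the limit).

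Now I would apply the constant rank theorem \cite[Theorem 1.1]{BG09} to $h_\infty$: the operator on the left is a positive combination of a linear term and the determinant (which, after $\det^{1/n}$, is concave --- precisely the structure underlying the convexity statements of Section~\ref{Preliminaries} for $b\ge0$), so $D^2 h_\infty$ has constant rank on the connected set $\R^n$; since that rank is $<n$ at the base point, $\det D^2 h_\infty\equiv0$. This has to be contradicted. I would do so by noting that $h_\infty$ is the Legendre dual of the corresponding blow-up limit $g_\infty$ of the $g_k$, an entire convex function whose gradient, because the dilation sends the (fixed) moment polytope onto all of $\R^n$, is onto; hence $\nabla h_\infty=(\nabla g_\infty)^{-1}$ is onto $\R^n$, incompatible with $\det D^2 h_\infty\equiv0$. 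Equivalently, after the constant rank reduction $h_\infty$ becomes a convex function of fewer variables solving a nondegenerate linear constant-coefficient equation on some $\R^{n'}$ with bounded Hessian, hence quadratic by a Liouville argument, from which one reads off a contradiction with the normalization carried through the blow-up.

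The main obstacle, I expect, is in making the blow-up and the limit precise: keeping the rescaled transformed equation uniformly elliptic with uniformly controlled H\"older coefficients as the domain is dilated, handling the possibility that $y_k$ approaches the boundary of the moment polytope, and --- most delicately --- rigorously closing the loop so that the constant-rank conclusion genuinely contradicts the global structure of a toric K\"ahler potential. Once the uniformly elliptic setup on an exhausting domain is in place, the lower Hessian bound, the uniform $C^{1,1}$ bounds on the Legendre transforms, and the bootstrapping are routine.
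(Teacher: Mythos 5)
Your high-level strategy matches the paper's: argue by contradiction, pass to the Legendre transform $h$, exploit the uniform upper bound on $D^2 h$ coming from the lower bound on $D^2 g$ (which you and the paper both extract from the equation, the bounds on $c$, $A$ and the positive lower bound on $D^2 f$), and then invoke the Bian--Guan constant rank theorem on a limit $h_\infty$. However, the specific route you take has a genuine gap.

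The main difference: the paper's proof of Proposition~\ref{C2 estimate for g} does \emph{not} blow up. It fixes a compact region $\nabla g(0.8B)$ inside the moment polytope (the limit convex function $g$ and the subdifferential estimate from \cite[Lemma 28]{CS17} guarantee $\nabla g(0.8B)$ has definite distance from $\partial U_k$ for $k$ large), establishes uniform $C^{3,\g}$ bounds on $h_k$ \emph{on that fixed region} via the auxiliary interior estimate Proposition~\ref{C2 gamma estimate for h} plus Schauder (the blow-up/Liouville step lives entirely inside Proposition~\ref{C2 gamma estimate for h}, not here), extracts a $C^{3,\g/2}$ limit $h_\infty$, and then splits into two cases. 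The degenerate case is killed by the constant rank theorem together with a clean measure identity: the change-of-variables relation $\int_{\nabla g_k(0.7B)} \det(D^2 h_k) = \Vol(0.7B)$ forces $\int_{\nabla g(0.8B)} \det(D^2 h_\infty) > 0$, whereas the constant rank theorem gives $\det D^2 h_\infty \equiv 0$. This is the contradiction and it is localized, using only the fixed domain.

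By contrast, your blow-up step as written does not make sense: you cannot simultaneously ``keep the Hessian scale fixed'' when $|D^2 g_k(x_k)| \to \infty$ and have the rescaled domains exhaust $\R^n$ --- a parabolic dilation of $g_k$ centered at $x_k$ that normalizes the Hessian shrinks the domain, while a dilation that blows the domain up leaves the Hessian unbounded. Your final contradiction also leans on the rescaled $\nabla g_\infty$ being surjective onto $\R^n$, which does not follow from the dilation you describe (and the function $h_\infty$ you build need not be the Legendre dual of an entire toric potential in any useful sense). Similarly, the $C^{3,\g}$ regularity of $h_k$ does not follow from a bare appeal to Evans--Krylov: at this stage you do not yet have a lower bound on $D^2 h_k$, so the determinant term has degenerating ellipticity, and the paper addresses this by first proving the $C^{2,\g}$ estimate (Proposition~\ref{C2 gamma estimate for h}) via its own blow-up and Liouville argument. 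I would suggest dropping the blow-up here entirely and replacing it with the fixed-domain compactness plus the $\int \det D^2 h$ mass argument; the blow-up you had in mind is indeed needed, but one level down, in the interior $C^{2,\g}$ estimate.
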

\begin{proof}
Suppose that there are sequences $f_k$, $g_k$, $b_k$, $c_k$, $\ell_k$ satisfying the hypothesis, including $|g_k|<N$ for some constant $N>0$, but $|D^2 g_k(x_k)|>k$ for some $x_k \in \frac{1}{2} B$. The moment polytopes $P_k=\nabla g_k(\R^n)$ of $g_k$ satisfy $0 \in P_k$ and are parallel to each other. So by passing to a subsequence we may assume that $P_k \to P_\infty \subset \R^n$ via parallel translations, and hence $\int_{P_k} \ell_k dy \to \int_{P_\infty} \ell_\infty dy$ as $k \to \infty$. By shrinking the ball a bit we can assume that $g_k \to g$ uniformly for some strictly convex $g \colon B \to \R$. Also, by \eqref{generalized modified J-equation on principal T-orbit} we have
\[
g_k{}^{ij} f_{k,ij} \leq c_k+\max_{P_k} \ell_k-\frac{1}{V} \int_{P_k} \ell_k dy.
\]
Since the right hand side has a uniform upper bound, combining with a uniform lower bound for $D^2 f_k$ yields
\begin{equation} \label{lower bound of g}
g_{ij}>\tau \d_{ij}
\end{equation}
for some $\tau>0$ independent of $k$. Let $h_k$ be the Legendre transform of $g_k$ and set $U_k:=(\nabla g_k)(B) \subset P_k$. Then we have $h_k(0)=0$, $\nabla h_k(0)=0$ from the normalization. Moreover, the formula \eqref{Legendre transform} together with \eqref{lower bound of g} yields an upper bound of $D^2 h_k$, and hence $h_k$ is uniformly bounded in $C^2$. Let $\nabla g$ denote the subdifferential of $g$, \ie
\[
(\nabla g)(x):=\{p \in \R| \text{$g(x') \geq g(x)+p(x'-x)$ for all $x' \in B$} \}, \quad x \in B
\]
and
\[
(\nabla g)(E):=\bigcup_{x \in E} (\nabla g)(x)
\]
for any subset $E \subset B$. Then the estimate for subdifferentials \cite[Lemma 28]{CS17} implies that for sufficiently large $k$ we have $(\nabla g)(0.9B) \subset U_k$, and so $(\nabla g)(0.8B)$ remains a fixed positive distance from $\p U_k$ for large $k$. In addition $h_k$ solves the equation
\[
f_{k,ij}(\nabla h_k(y))h_{k,ij}+b_k \det(D^2 f_k (\nabla h_k(y))) \det (D^2 h_k(y))=c_k+\ell_k (y)-\frac{1}{V} \int_{P_k} \ell_k dy.
\]
We use Proposition \ref{C2 gamma estimate for h} together with the Schauder estimate, to obtain uniform $C^{3,\g}$ bounds for $h_k$ on $\nabla g(0.8B)$. So combining with bounds for $f_k$, $b_k$, $c_k$, $\ell_k$, we can extract a convergent subsequence $h_k \to h_\infty$ on $\nabla g (0.8B)$ in $C^{3,\g/2}$ satisfying an equation of the form
\[
f_{\infty,ij}(\nabla h_\infty(y))h_{\infty,ij}+b_\infty \det(D^2 f_\infty (\nabla h_\infty(y))) \det (D^2 h_\infty(y))=c_\infty+\ell_\infty(y)-\frac{1}{V} \int_{P_\infty} \ell_\infty dy.
\]
As in \cite[Proposition 27]{CS17} we consider the following two cases separately:
\begin{itemize}
\item If we have a positive lower bound on $D^2 h_\infty$ this yields a uniform lower bound on $D^2 h_k$ on $\nabla g(0.8B)$, and so a uniform upper bound for $D^2 g_k$ at all points for which $\nabla g_k(x) \in \nabla g(0.8B)$. However, we have $\nabla g_k(0.7B) \subset \nabla g (0.8B)$ for sufficiently large $k$ by \cite[Lemma 28]{CS17}, and hence an upper bound of $D^2 g_k$ on $0.7B$. This is a contradiction.

\item If the Hessian $D^2 h_\infty$ degenerates as some point then we apply the Bian--Guan's constant rank theorem \cite[Theorem 1.1]{BG09} to know that $D^2 h_\infty$ degenerates everywhere. Indeed, for a fixed value of $\nabla h_\infty$, the equation is of the form
\[
F(A,y):=\Tr(BA)+b \det(A)-L(y)=0
\]
for a positive definite Hermitian matrix $B$, constant $b \geq 0$ and affine linear function $L$. The assumptions of \cite[Theorem 1.1]{BG09} are satisfied, since $F(A^{-1},y)$ is convex in $(A,y)$ according to Proposition \ref{strong convexity for positive b}. Thus we have
\[
\int_{\nabla g(0.8B)} \det (D^2 h_{\infty})=0.
\]
On the other hand,
\[
\int_{\nabla g_k(0.7B)} \det (D^2 h_k)=\Vol(0.7B).
\]
This contradicts the fact that $\nabla g_k (0.7B) \subset \nabla g(0.8B)$ for sufficiently large $k$ and $h_k \to h_\infty$ in $C^{3,\g/2}$.
\end{itemize}
\end{proof}

Hence, the local estimate for $D^2 g$ can be reduced to the local $C^{2,\gamma}$-estimate for $h$, which we will focus in the following. A main difference from the usual $J$-equation case \cite[Proposition 29]{CS17} is that we have to deal with an affine linear function in the right hand side of \eqref{generalized modified J-equation for Legendre dual}. However we will observe that the affine linear function converges to a positive constant through the blowup argument. So the Liouville rigidity result \cite[Lemma 31]{CS17} can also be applied to our case.

\begin{prop} \label{C2 gamma estimate for h}
Suppose that $h$ is a smooth convex function on the unit ball $B \subset \R^n$ satisfying the equation
\begin{equation} \label{equation for h}
a_{ij}(\nabla h)h_{ij}+b(\nabla h) \det(D^2 h)=L,
\end{equation}
where $a_{ij}, b \in C^{1,\g}$, $b \geq 0$, $\lambda<a_{ij}<\Lambda$ for some constants $\lambda, \Lambda>0$, $L$ is an affine linear function with $\inf_B L \geq \d$ for some constant $\d>0$. Then there exists a constant $C>0$ depending on $\lambda$, $\Lambda$, $\d$, bound of the coefficients of $L$, $C^{1,\g}$ bounds of $a_{ij}$ and $b$, and the $C^2$ bounds of $h$, such that
\[
\|h\|_{C^{2,\g}(\frac{1}{2}B)}<C.
\]
\end{prop}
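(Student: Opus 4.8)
The plan is to argue by contradiction using a Caffarelli-type blow-up, exactly along the lines of \cite[Proposition 29]{CS17}; the only genuinely new point is that the right-hand side of \eqref{equation for h} is now an affine function rather than a constant, and one must check that this causes no harm. Suppose the asserted estimate fails. Then there are smooth convex functions $h_k$ on $B$ solving
\[
a_{ij}^{(k)}(\nabla h_k)\,(h_k)_{ij}+b^{(k)}(\nabla h_k)\det(D^2 h_k)=A^{(k)},
\]
with coefficients and affine data obeying the stated bounds uniformly and $\|h_k\|_{C^2(B)}\le C_0$, but with $M_k:=[D^2 h_k]_{C^\g(\frac12 B)}\to\infty$. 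By the standard localization and point-selection procedure one produces points $x_k\in\frac12 B$ and radii $r_k\to 0$ along which the Hölder seminorm of $D^2 h_k$ concentrates, and forms rescaled functions $\tilde h_k$ on balls exhausting $\R^n$, normalized so as to be convex with uniformly bounded $C^2$ norm and with the Hölder seminorm of $D^2\tilde h_k$ bounded above on each fixed ball but bounded below near the origin.

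Next I would freeze the coefficients. Since $\|h_k\|_{C^2}\le C_0$, the gradients $\nabla h_k$ are uniformly Lipschitz, so $\nabla\tilde h_k(x)=\nabla h_k(x_k+r_k x)=\nabla h_k(x_k)+O(r_k|x|)$; passing to a subsequence, $\nabla h_k(x_k)\to p_\infty$, and (after a further subsequence, using the $C^{1,\g}$ control of the coefficients and Arzel\`a--Ascoli) $a_{ij}^{(k)}(\nabla\tilde h_k)\to\bar a_{ij}$ and $b^{(k)}(\nabla\tilde h_k)\to\bar b\ge 0$ locally uniformly, the limits being constants. The rescaled equation reads
\[
a_{ij}^{(k)}(\nabla\tilde h_k)\,(\tilde h_k)_{ij}+b^{(k)}(\nabla\tilde h_k)\det(D^2\tilde h_k)=A^{(k)}(x_k+r_k x),
\]
and here affineness of $A^{(k)}$ is used: $A^{(k)}(x_k+r_k x)=A^{(k)}(x_k)+r_k\,\nabla A^{(k)}\!\cdot x$, and since $r_k\to 0$, the coefficients of $A^{(k)}$ are bounded, and $A^{(k)}(x_k)$ is bounded below by $\d$ and above uniformly, a further subsequence gives $A^{(k)}(x_k+r_k x)\to\bar A$ locally uniformly for a constant $\bar A\ge\d>0$. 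Thus the linear part of the right-hand side is scaled away in the blow-up and only a positive constant remains.

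Using the interior estimates for the uniformly elliptic, $C^\g$-coefficient equations satisfied by the $\tilde h_k$, together with the uniform $C^2$ and Hessian-H\"older bounds, I would extract a subsequence converging in $C^{2,\g/2}_{\mathrm{loc}}(\R^n)$ to an entire convex function $\bar h$ with bounded Hessian solving the constant-coefficient equation
\[
\bar a_{ij}\,\bar h_{ij}+\bar b\det(D^2\bar h)=\bar A,
\]
and with $D^2\bar h$ non-constant, since the lower bound on the Hessian-H\"older seminorm near the origin persists in the limit. I then invoke the Liouville-type rigidity result \cite[Lemma 31]{CS17} (whose proof relies on the Bian--Guan constant rank theorem \cite[Theorem 1.1]{BG09}): an entire convex solution of such a constant-coefficient equation with $\bar A>0$ and bounded Hessian must be a quadratic polynomial, so $D^2\bar h$ is constant --- a contradiction, which proves the proposition.

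The main obstacle is the Caffarelli point-selection and normalization bookkeeping in the first step: arranging the rescaled solutions to remain convex, to have uniformly bounded $C^2$ norms, and to carry a Hessian-H\"older seminorm that is uniformly bounded above yet does not degenerate in the limit. This is the standard but delicate core of such arguments. By contrast, the genuinely new feature here, the affine term $A$, is essentially harmless: its linear part is killed by the blow-up, leaving the positive constant $\bar A\ge\d$, so the Liouville lemma of \cite{CS17} applies verbatim.
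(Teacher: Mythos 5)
Your contradiction/blow-up strategy is the same family of argument as the paper, and your treatment of the new feature (the affine right-hand side) is exactly right and matches the paper: after rescaling at scale $r_k\to 0$ one has $A^{(k)}(x_k+r_k y)=A^{(k)}(x_k)+r_k\nabla A^{(k)}\!\cdot y\to\bar A\geq\delta$, so the linear part dies and the Liouville lemma of \cite{CS17} applies to a constant right-hand side. However, the core of your argument --- the choice of blow-up quantity and the point-selection guaranteeing that the limit is non-quadratic --- is precisely where you leave a genuine gap, and it is not merely ``bookkeeping.''

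You propose to blow up the H\"older seminorm $M_k:=[D^2 h_k]_{C^\gamma(\frac12 B)}$. If one picks $x_k,y_k$ nearly realizing $M_k$ and rescales at $r_k:=|x_k-y_k|$, the H\"older quotient between the images of $x_k$ and $y_k$ in the rescaled picture equals $r_k^\gamma\cdot|D^2h_k(x_k)-D^2h_k(y_k)|/r_k^\gamma\geq r_k^\gamma M_k/2$; the uniform $C^2$ bound only gives $r_k^\gamma M_k\leq 4C_0$ from above, with no lower bound. Consequently the pair of points realizing the critical quotient can collapse in the rescaled picture, and since $C^{2,\gamma}$-bounded sequences only converge in $C^{2,\gamma'}_{\mathrm{loc}}$ for $\gamma'<\gamma$, the $C^\gamma$-seminorm need not persist in the limit: the limit could well be a quadratic polynomial, producing no contradiction. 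This is the endpoint-compactness loss that makes H\"older-seminorm blow-ups genuinely delicate, and you acknowledge the difficulty without resolving it. The paper avoids it altogether by blowing up the weighted pointwise quantity $N_h:=\sup_{y\in B} d_y\,|D^3 h(y)|$ (where $d_y=d(y,\partial B)$), rescaling by $r:=d_{y_0}N_h^{-1}$ so that $|D^3\tilde h(0)|=1$ --- a pointwise normalization that trivially passes to any $C^{3}_{\mathrm{loc}}$ limit --- while the weight $d_y$ both keeps the rescaled domain exhausting $\R^n$ and gives the uniform bound $|D^3\tilde h|\leq 2$ on $B_{N_h/2}(0)$. To make your version rigorous you would need either to switch to this kind of pointwise-normalized quantity, or to carry out a genuine Caffarelli/Simon-type point selection that fixes the scale at which the H\"older quotient concentrates; as written, the proof is incomplete at its central step.
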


\begin{proof}
Let
\[
N_h:=\sup_{y \in B} d_y|D^3 h(y)|,
\]
where $d_y:=d(y,\p B)$ denotes the distance from the boundary $\p B$. Assume that $N_h>1$ and the supremum is achieved at a point $y_0 \in B$. Define the function
\[
\tilde{h}(y)=d_{y_0}^{-2}N_h^2 h(y_0+d_{y_0}N_h^{-1}y)-\tilde{L}(y),
\]
where the affine function $\tilde{L}$ is chosen so that
\begin{equation} \label{normalization for h}
\tilde{h}(0)=0, \quad \nabla \tilde{h}(0)=0.
\end{equation}
Note that the rescaled function $\tilde{h}$ is defined on the ball $B_{N_h}(0)$. Also a direct computation shows that
\[
D^2 \tilde{h}(y)=D^2h(y_0+d_{y_0}N_h^{-1}y), \quad D^3 \tilde{h}(y)=d_{y_0}N_h^{-1} D^3 h(y_0+d_{y_0}N_h^{-1}y).
\]
In particular, we have $|D^3 \tilde{h}(y)| \leq 2$ on $B_{2^{-1}N_h}(0)$. Moreover, since $|D^2 \tilde{h}|=|D^2 h|<C$ the normalization \eqref{normalization for h} implies that
\[
\|\tilde{h}\|_{C^3(B_{2^{-1}N_h}(0))}<C
\]
for a uniform constant $C>0$. One can observe that the rescaled function $\tilde{h}$ satisfies an equation of the form
\begin{equation} \label{equation for tilde h}
\tilde{a}_{ij}(\nabla \tilde{h}(y)) \tilde{h}_{ij}(y)+\tilde{b}(\nabla \tilde{h}(y)) \det(D^2 \tilde{h}(y))=L(y_0+d_{y_0}N_h^{-1}y),
\end{equation}
where the coefficients $\tilde{a}_{ij}$, $\tilde{b}$ has the same $C^0$ bounds as $a_{ij}$, $b$, whereas
\begin{equation} \label{estimate for coefficients}
\sup|\nabla \tilde{a}_{ij}| \leq d_{y_0} N_h^{-1} \sup|\nabla a_{ij}|, \quad \sup|\nabla \tilde{b}| \leq d_{y_0} N_h^{-1} \sup |\nabla b|.
\end{equation}
The differentiating the equation \eqref{equation for tilde h} and using the standard Schauder theory we know that $\nabla \tilde{h}$ is bounded in $C^{2,\g}$.

Now we argue by contradiction. So we suppose that there exists a sequence of convex functions $h_k$ on $B$ satisfying \eqref{equation for h} with coefficients $a_{k,ij}$, $b_k$, and $L_k$, and associated constant $N_{h_k}>4k$. For each $k$ we take a point $y_{0,k} \in B$ that attains the supremum of $N_{h_k}$. Then the rescaled function $\tilde{h}_k$ defined on $B_{4k}(0)$ satisfies an equation of the form
\[
\tilde{a}_{k,ij}(\nabla \tilde{h}_k(y)) \tilde{h}_{k,ij}(y)+\tilde{b_k}(\nabla \tilde{h}_k(y)) \det(D^2 \tilde{h}_k(y))=L_k(y_{0,k}+d_{y_{0,k}}N_{h_k}^{-1}y),
\]
has uniform $C^{3,\g}$ bounds on $B_k(0)$ and satisfies $|D^3 \tilde{h}_k(0)|=1$. By passing to a subsequence we may assume that $y_{0,k} \to y_{0,\infty} \in B$. Combining with bounds for $a_{k,ij}$, $b_k$, $L_k$ as well as a standard diagonal argument shows that by taking a subsequence we can extract a convex limit $\tilde{h}_\infty \colon \R^n \to \R$ in $C_{\loc}^{3,\g/2}$, satisfying $|D^3 \tilde{h}_\infty(0)|=1$ and
\[
\tilde{a}_{\infty,ij} \tilde{h}_{\infty,ij}(y)+\tilde{b}_\infty \det(D^2 \tilde{h}_\infty(y))=L_\infty (y_{0,\infty})
\]
with constant coefficients $\tilde{a}_{\infty,ij}$, $\tilde{b}_\infty$ due to \eqref{estimate for coefficients}. Since $L_\infty (y_{0,\infty}) \geq \d>0$, after a linear change of coordinates, we can apply the Liouville rigidity result \cite[Lemma 31]{CS17} to conclude that $\tilde{h}_\infty$ is a quadratic polynomial. This contradicts the fact that $|D^3 \tilde{h}_\infty(0)|=1$, completing the proof.
\end{proof}

\section{A numerical criterion for extremal K\"ahler metrics} \label{A numerical criterion for extremal Kahler metrics}
Let $X$ be an $n$-dimensional compact K\"ahler manifold and $T \subset \Aut_{\rm red}(X)$ a maximal compact torus. Let $\hat{\chi}$ be a $T$-invariant K\"ahler form, and let $\b$ denote its K\"ahler class. We choose the extremal vector field $v_{\rm ext}$ so that $\Im(v_{\rm ext}) \in \ft$. The following discussion is entirely based on \cite{DJ25}. However, since there are differences in conventions such as the definitions of functionals, we briefly summarize and unify them in our setting for the reader’s convenience. We define the modified $K$-energy
\[
K_{v_{\rm ext}}(\phi)=-\int_0^1 \int_X \phi \big(S(\chi_{t \phi})-\overline{S}-\theta_{v_{\rm ext}}^X(\chi_{t \phi}) \big) \chi_{t \phi}^n dt.
\]
where $S(\chi_{t \phi})$ denotes the scalar curvature of $\chi_{t \phi}$, and $\overline{S}=\frac{n c_1(X) \cdot \b^{n-1}}{\b^n}$ its average. A critical point of the functional $K_{v_{\rm ext}}$, \ie a solution $\phi \in \cH(X,\hat{\chi})^T$ to the equation
\[
S(\chi_\phi)-\overline{S}=\theta_{v_{\rm ext}}^X(\chi_\phi)
\]
is called an extremal K\"ahler metric (\cf \cite{Cal82}). By the definition of $v_{\rm ext}$ the modified $K$-energy $K_{v_{\rm ext}}$ is invariant under the action of the complexified torus $T^\C$. Let $v$ be a holomorphic vector field on $X$ such that $\Im(v) \in \ft$, and let $\rho$ be a real closed $(1,1)$-form on $X$, which is not necessarily K\"ahler. For later use, it is convenient to extend \eqref{Jvomegab functional} to $\rho$ as well:
\[
J_v^\rho(\phi)=\int_0^1 \int_X \phi \big( \Tr_{\chi_{t \phi}} \rho-c_{[\rho]}-\theta_v^X(\chi_{t \phi}) \big) \chi_{t \phi}^n dt, \quad c_{[\rho]}:=\frac{n [\rho] \cdot \b^{n-1}}{\b^n}.
\]
It is easy to verify that $J_{s_1v_1+s_2v_2}^{s_1\rho_1+s_2\rho_2}=s_1J_{v_1}^{\rho_1}+s_2J_{v_2}^{\rho_2}$ for all $s_1,s_2 \in \R$.
We set
\[
J_{T^\C}^{\hat{\chi}}(\phi):=\inf_{\tau \in T^\C} J^{\hat{\chi}}(\phi_\tau),
\]
where the potential $\phi_\tau \in \cH(X,\hat{\chi})^T$ satisfies $\tau^\ast \chi_\phi=\chi_{\phi_\tau}$. By Remark \ref{equivalent definitions of coercivity}, the functional $J^{\hat{\chi}}$ is comparable with $I$, $J$ and hence $d_1$-distance as well. In particular, for a fixed $\phi \in \cH(X,\hat{\chi})^T$, there exists $\tau \in T^\C$ that attains the infimum in the definition of $J_{T^\C}^{\hat{\chi}}$ \cite[Theorem 3.7]{DJ25}. We say that the modified $K$-energy $K_{v_{\rm ext}}$ is $T^\C$-coercive if there exist constants $C_1, C_2>0$ such that
\[
K_{v_{\rm ext}}(\phi) \geq C_1 J_{T^\C}^{\hat{\chi}}(\phi)-C_2
\]
for all $\phi \in \cH(X,\hat{\chi})^T$. We define the relative entropy $H$ by
\[
H(\phi):=\int_X \log \frac{\chi_\phi^n}{\hat{\chi}^n} \chi_\phi^n.
\]
Using these functionals, the modified $K$-energy $K_{v_{\rm ext}}$ can be expressed as
\begin{equation} \label{Chen-Tian formula}
K_{v_{\rm ext}}(\phi)=H(\phi)+J_{-v_{\rm ext}}^{-\Ric(\hat{\chi})}(\phi).
\end{equation}
This is well-known as the Chen--Tian formula \cite{Che00,Tia97a,Tia97b}. Define the $T^\C$-reduced delta invariant of the K\"ahler class $\b$ by
\begin{equation} \label{TC-reduced delta invariant}
\d_\b^{T^\C}:=\sup \bigg\{ \d \in \R \bigg| \inf_{\phi \in \cH(X,\hat{\chi})^T} \big( H(\phi)-\d J_{T^\C}^{\hat{\chi}}(\phi) \big) \in \R \bigg\}.
\end{equation}
Note that the value $\d_\b^{T^\C}$ does not depend on the choice of reference form $\hat{\chi} \in \b$, as can be verified by an argument similar to the proof of Theorem \ref{solvability is independent of omega}. By using Tian's alpha invariant, one can also see that $\d_\b^{T^\C}>0$ \cite[Section 3.2]{DJ25}. In view of \eqref{Chen-Tian formula}, the invariant $\d_\b^{T^\C}$ quantitatively measures the coercivity of the modified $K$-energy $K_{v_{\rm ext}}$.

As an application to the existence problem of extremal K\"ahler metrics, we now prove the following theorem:

\begin{thm} \label{numerical criterion for extremal Kahler metrics on smooth projective toric varieties}
Let $X$ be an $n$-dimensional compact K\"ahler manifold, and $T$, $v_{\rm ext}$, $\hat{\chi}$, $\b$ as above. Assume that Conjecture \ref{NM criterion} holds, and that there exists a constant $\e>0$ satisfying the following properties:
\begin{enumerate}
\item The class $(\d_\b^{T^\C}-\e) \b-c_1(X)$ is K\"ahler.
\item For any $T$-invariant K\"ahler forms $\chi \in \b$,
\[
\max_X \theta_{v_{\rm ext}}^X(\chi)<n(\d_\b^{T^\C}-\e)-\frac{n c_1(X) \cdot \b^{n-1}}{\b^n}.
\]
\item For any $T$-invariant $p$-dimensional irreducible subvarieties $Y \subset X$ with $p=1,\ldots,n-1$ and any $T$-invariant K\"ahler forms $\chi \in \b$,
\[
\int_Y \bigg[ \bigg( (n-p)(\d_\b^{T^\C}-\e)-\frac{n c_1(X) \cdot \b^{n-1}}{\b^n}-\theta_{v_{\rm ext}}^X(\chi) \bigg) \chi^p+p \chi^{p-1} \cdot c_1(X) \bigg]>0.
\]
\end{enumerate}
Then the K\"ahler class $\b$ admits a $T$-invariant extremal K\"ahler metric.
\end{thm}

\begin{proof}
For each $\phi \in \cH(X,\hat{\chi})^T$, let us take $\tau \in T^\C$ that minimizes $J^{\hat{\chi}}(\phi_\tau)$ so that $J_{T^\C}^{\hat{\chi}}(\phi_\tau)=J_{T^\C}^{\hat{\chi}}(\phi)=J^{\hat{\chi}}(\phi_\tau)$. As in \cite[Lemma 5.4]{DJ25}, using the $T^\C$-invariance of $K_{v_{\rm ext}}$, Chen--Tian formula \eqref{Chen-Tian formula}, and definition of $\d_\b^{T^\C}$, we obtain
\[
\begin{aligned}
K_{v_{\rm ext}}(\phi) &=K_{v_{\rm ext}}(\phi_\tau)\\
&=H(\phi_\tau)+J_{-v_{\rm ext}}^{-\Ric(\hat{\chi})}(\phi_\tau)\\
&\geq \bigg(\frac{\e}{2}+\d_\b^{T^\C}-\e \bigg) J^{\hat{\chi}}(\phi_\tau)+J_{-v_{\rm ext}}^{-\Ric(\hat{\chi})}(\phi_\tau)-C\\
&=\frac{\e}{2} J^{\hat{\chi}}(\phi_\tau)+J^{(\d_\b^{T^\C}-\e)\hat{\chi}}(\phi_\tau)+J_{-v_{\rm ext}}^{-\Ric(\hat{\chi})}(\phi_\tau)-C\\
&=\frac{\e}{2} J_{T^\C}^{\hat{\chi}}(\phi)+J_{-v_{\rm ext}}^{(\d_\b^{T^\C}-\e)\hat{\chi}-\Ric(\hat{\chi})}(\phi_\tau)-C.
\end{aligned}
\]
Thus, showing that $K_{v_{\rm ext}}$ is $T^\C$-coercive reduces to the problem of establishing a lower bound for the functional $J_{-v_{\rm ext}}^{(\d_\b^{T^\C}-\e)\hat{\chi}-\Ric(\hat{\chi})}$. Since the class $\a:=(\d_\b^{T^\C}-\e) \b-c_1(X)$ is K\"ahler, we can choose a K\"ahler form $\omega \in \a$, and define the modified $J$-functional
\begin{equation} \label{Jvext functional}
J_{-v_{\rm ext}}^\omega (\phi)=\int_0^1 \int_X \phi \big( \Tr_{\chi_{t \phi}} \omega-n(\d_\b^{T^\C}-\e)+\overline{S}-\theta_{-v_{\rm ext}}^X(\chi_{t \phi}) \big) \chi_{t \phi}^n dt.
\end{equation}
The forms $(\d_\b^{T^\C}-\e)\hat{\chi}-\Ric(\hat{\chi})$ and $\omega$ both belong to the same class $\a$. So by \cite[Lemma 3.2]{DJ25}, or by the same argument as in the proof of Theorem \ref{solvability is independent of omega}, one can see that the difference between $J_{-v_{\rm ext}}^{(\d_\b^{T^\C}-\e)\hat{\chi}-\Ric(\hat{\chi})}$ and $J_{-v_{\rm ext}}^\omega$ is bounded by a uniform constant. Since the functional $J_{-v_{\rm ext}}^\omega$ is convex (Proposition \ref{strict convexity of the modified J-functional}), the $T^\C$-coercivity of $K_{v_{\rm ext}}$ reduces to finding a critical point of the functional \eqref{Jvext functional}, \ie a solution to the modified $J$-equation for $\phi \in \cH(X,\hat{\chi})^T$:
\begin{equation} \label{modified J-equation for extremal Kahler metrics}
\Tr_{\chi_\phi} \omega=n(\d_\b^{T^\C}-\e)-\overline{S}+\theta_{-v_{\rm ext}}^X(\chi_\phi).
\end{equation}
Therefore, assuming that Conjecture \ref{NM criterion} is true, we can apply it to the equation \eqref{modified J-equation for extremal Kahler metrics}, and deduce that $K_{v_{\rm ext}}$ is $T^\C$-coercive under the assumptions in Theorem \ref{numerical criterion for extremal Kahler metrics on smooth projective toric varieties}. Then it follows from \cite[Theorem 1.2]{NJL25} that the K\"ahler class $\b$ admits a $T$-invariant extremal K\"ahler metric. This completes the proof of Theorem \ref{numerical criterion for extremal Kahler metrics on smooth projective toric varieties}.
\end{proof}

We conclude this section with a few additional remarks. First, a variant of Theorem \ref{numerical criterion for extremal Kahler metrics on smooth projective toric varieties} was already obtained in \cite[Corollary 1.5]{Che21}. The formulation there is essentially the same as ours, except that we work in the $T^\C$-reduced setting.

We also emphasize that the above theorem is essentially established in \cite[Theorem 1.4]{DJ25}\footnote{More precisely, Delcroix--Jubert \cite{DJ25} obtain an existence result for $(\mathrm{v},\mathrm{w})$-weighted cscK metrics in their context, and extremal K\"ahler metrics arises as the special case with $\mathrm{v}=1$ and $\mathrm{w}$ being the affine linear function correspoinding to the extremal vector field $v_{\rm ext}$.}. Our contribution lies in identifying a numerical characterization of the solvability of the modified $J$-equation \eqref{modified J-equation for extremal Kahler metrics}, under the assumption that Conjecture \ref{NM criterion} holds.
In particular, the conjecture allows one to replace the analytic solvability condition by explicit intersection inequalities involving $T$-invariant subvarieties.

Moreover, in the toric case, Theorem \ref{NM criterion for smooth projective toric varieties} yields the toric analogue of Conjecture \ref{NM criterion}. In particular, a stronger statement holds: the conclusions remain valid even if $\delta_{\beta}^{T^\C}-\varepsilon$ in assumptions (1)--(3) of Theorem \ref{numerical criterion for extremal Kahler metrics on smooth projective toric varieties} is replaced by $\delta_{\beta}^{T^\C}$. This is due to the fact that, in (3), the objects under consideration are restricted to toric subvarieties, which form a finite set.
                                                                                                                                                                                                                                                                                                                                                                                                                                                                                                                                                                                                                                                                                                                                                                                                                                                                                                                                                                                                                                                                                                                                                                                                                                                                                                                                                                                                                                                                                                                                                                                                                                                                                                                                                                                                                                                                                                                                                                                                                                                                                                                                                                                                                                                                                                                                                                                                                                                                                                                                                                                                                                                                                                                                                                                                                                                                                                                                                                                                                                                                                                                                                                                                                                                                                                                                                                                                                                                                                                                                                                                                                                                                                                                                                                                                                                                                                                                                                                                                                           
\newpage

\end{document}